\definecolor{darkred}{RGB}{179,0,0} 
\newcommand{\blind}{0}
\definecolor{Dgreen}{HTML}{26b003}
\newcommand{\bfX}{{\bf X}}
\DeclareMathOperator*{\argmin}{argmin}
\newcommand{\Hh}{ \mathcal{H} }
\newcommand{\ra}{\rightarrow}
\newcommand{\R}{\mathbb R}
\newcommand{\MBD}{\mathrm{MBD}}
\def\bX{{\bf X}}
\newtheorem{theorem}{Theorem}[section]
\newtheorem{lemma}[theorem]{Lemma}
\newtheorem{definition}[theorem]{Definition}
\begin{document}

\def\spacingset#1{\renewcommand{\baselinestretch}%
{#1}\small\normalsize} \spacingset{1}

\if0\blind
{
  \title{\bf The accumulated persistence function,  a new useful functional summary statistic for topological data analysis, with a view to brain artery trees and spatial point process applications}
  \author{Christophe A.N. Biscio\hspace{.2cm}\\
    Department of Mathematical
    Sciences, Aalborg University, Denmark\\
    and \\
    Jesper Møller \\
    Department of Mathematical
    Sciences, Aalborg University, Denmark}
  \maketitle
} \fi

\if1\blind
{
  \bigskip
  \bigskip
  \bigskip
  \begin{center}
    {\LARGE\bf The accumulated persistence function,  a new useful functional summary statistic for topological data analysis, with a view to brain artery trees and spatial point process applications}
\end{center}
  \medskip
} \fi

\bigskip
\begin{abstract}
We start with a simple introduction to topological data analysis where the most popular tool is called a 
 persistent diagram. Briefly, a persistent diagram is a multiset of points in the plane 
describing the persistence of topological features of a compact set when 
 a scale parameter varies.  
Since statistical methods are difficult to apply directly 
on persistence diagrams, various alternative functional 
summary statistics have been suggested, but either
they do not contain  the full information of the persistence diagram 
or they are two-dimensional functions. 
 We suggest a new functional summary statistic that is one-dimensional and hence easier to handle, and 
which under mild conditions contains the full information of the persistence diagram. 
Its usefulness is illustrated in statistical settings concerned with point clouds and brain artery trees. 
The appendix includes additional methods and examples, together with technical details. 
The {\sf {\bf R}}-code used for all examples is available at \url{http://people.math.aau.dk/~christophe/Rcode.zip}.
\end{abstract}

\noindent%
{\it Keywords:}  clustering, confidence region,  global rank envelope, functional boxplot, persistent homology, two-sample test.
\vfill

\newpage
\spacingset{1.45} 

\section{Introduction} 

Statistical methods that make use of algebraic topological
ideas to summarize and visualize complex data 
are called 
topological data analysis (TDA). In particular persistent homology and a method called the persistence diagram are used to measure the persistence of topological features. 
As we expect many readers may not be familiar with these concepts,  Section~\ref{s:simple-ex} 
discusses two examples without going into technical details though a few times it is unavoidable to refer to the terminology used in persistent homology. Section~\ref{s:1:background} discusses the use of the persistence diagram and related summary statistics
and motivates why in Section~\ref{s:1:APF} 
a new  
functional summary statistic called the accumulative persistence function (APF) is introduced. The remainder of the paper demonstrates the use of the APF in various statistical settings concerned with point clouds and brain artery trees.

\subsection{Examples of TDA}\label{s:simple-ex} 

The mathematics underlying TDA 
uses technical definitions
and results from persistent homology, see \cite{fasy:etal:14} and the references therein. This theory will not be needed for the present paper. Instead we provide an understanding through examples of the notion of persistence of 
 $k$-dimensional topological features for a sequence of compact subsets $C_t$ of the $d$-dimensional Euclidean space  $\mathbb R^d$, where $t\ge0$, $k=0,1,\ldots,d-1$, and either $d=2$ (Section~\ref{s:toy}) or $d=3$ (Section~\ref{s:brain}).
Recalling that a set $A\subseteq \mathbb R^d$ is path-connected if any two points in $A$ are connected by a curve in $A$,  
a $0$-dimensional topological feature of a compact set $C\subset\mathbb R^d$ is a maximal path-connected subset of $C$, also called a  {\it connected component} of $C$. The meaning of a $1$-dimensional topological feature is simply understood when $d=2$, and we appeal to this in a remark at the end of Section~\ref{s:toy} when $d=3$.

\subsubsection{A toy example}\label{s:toy}
 
Let $C\subset \mathbb R^2$ be the union of the three circles  depicted in the
top-left panel of Figure~\ref{fig:TDA_tuto}. 
The three circles are the $0$-dimensional topological features (the connected components) of $C$, as  any curve that goes from a circle to another will be outside $C$. The complement $\mathbb R^2\setminus S$ has four connected components, one of which is unbounded, whilst the three others are the $1$-dimensional topological features of $C$, also called the 
{\it loops} of $C$ (the boundary of each bounded connected component is a closed curve with no crossings; in this example the closed curve is just a circle).

 \begin{figure}
 \centering 
 \begin{tabular}{ccc}  
\includegraphics[scale=0.15]{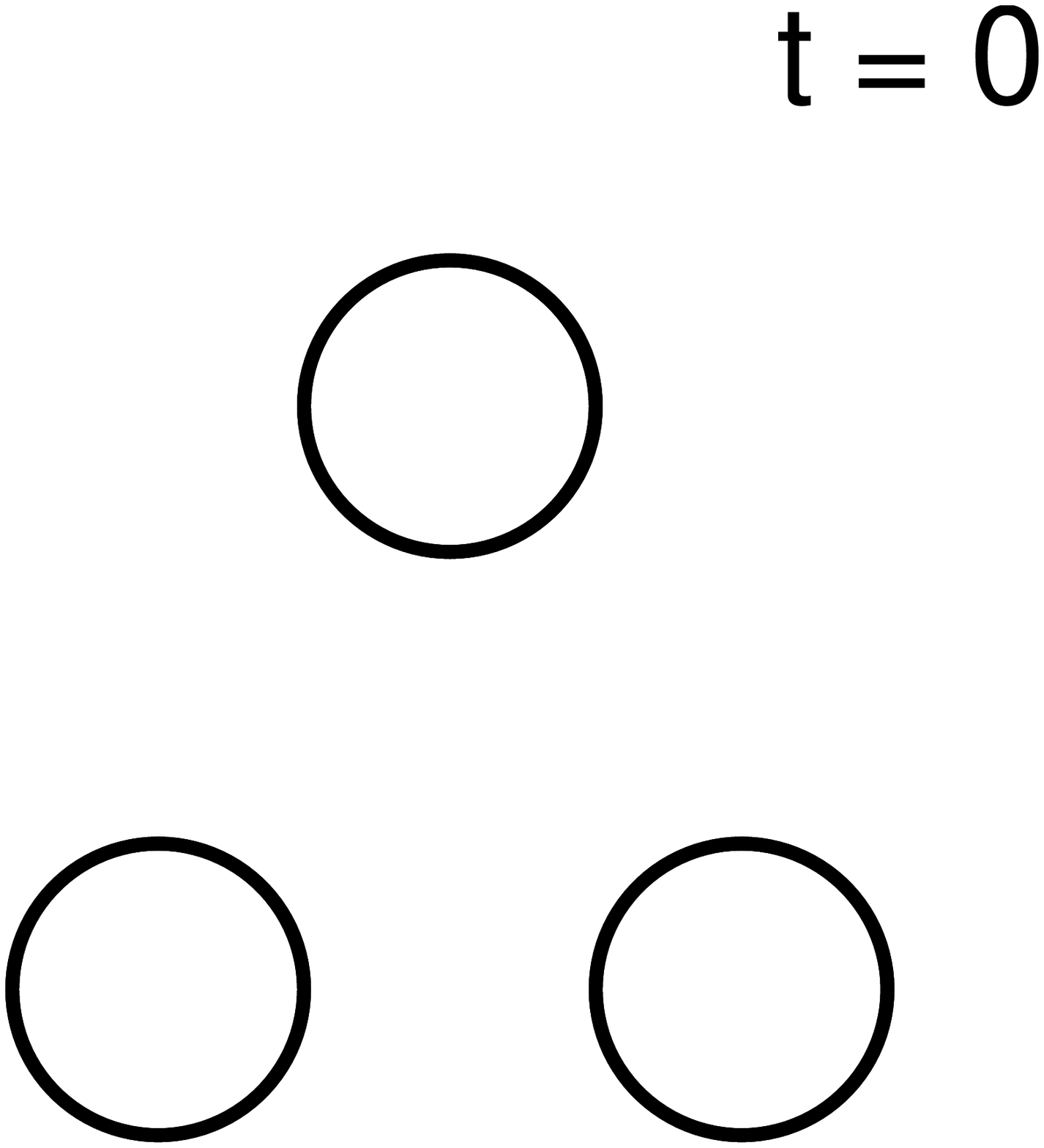}  & \includegraphics[scale=0.15]{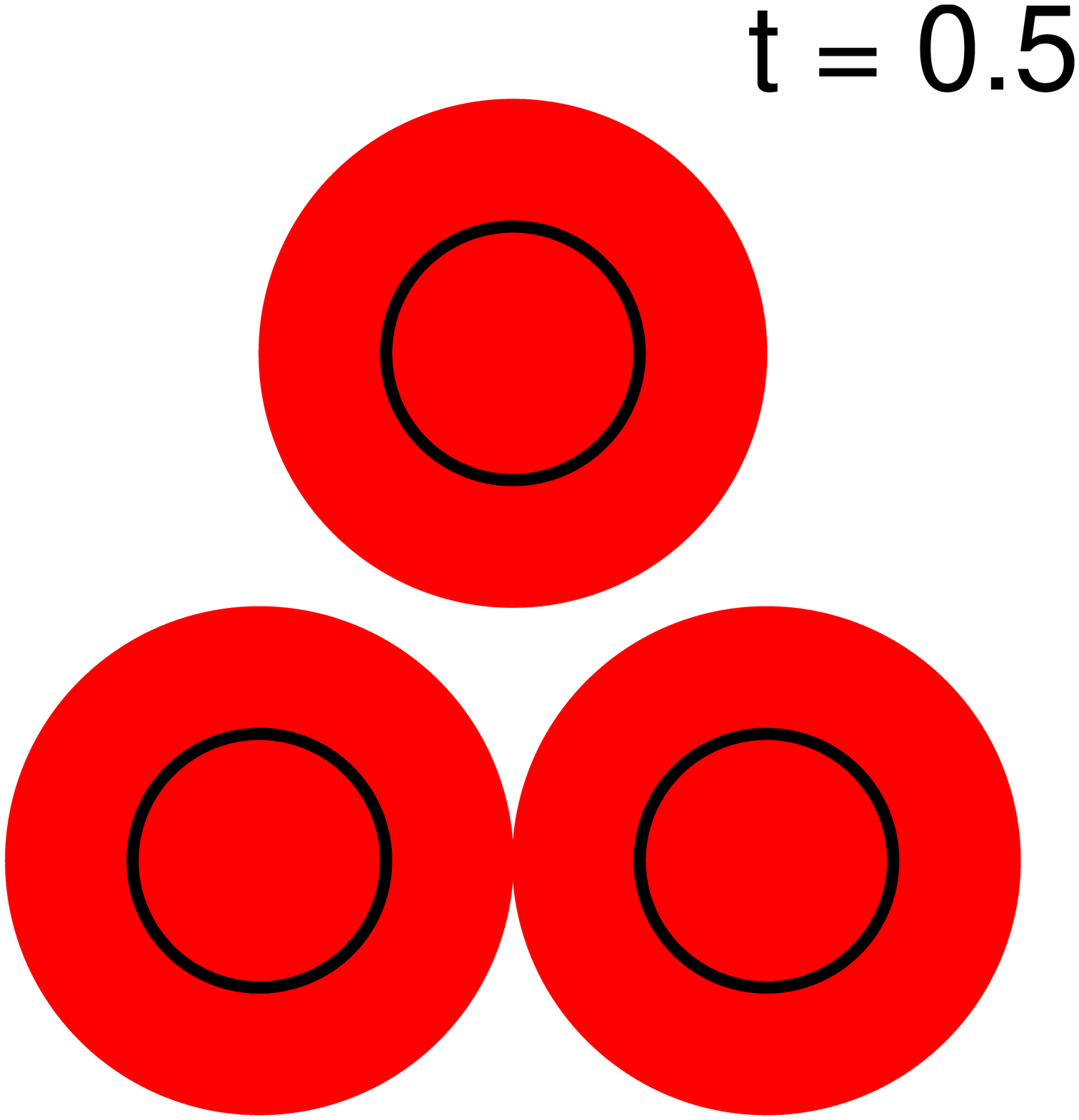} & 
\includegraphics[scale=0.15]{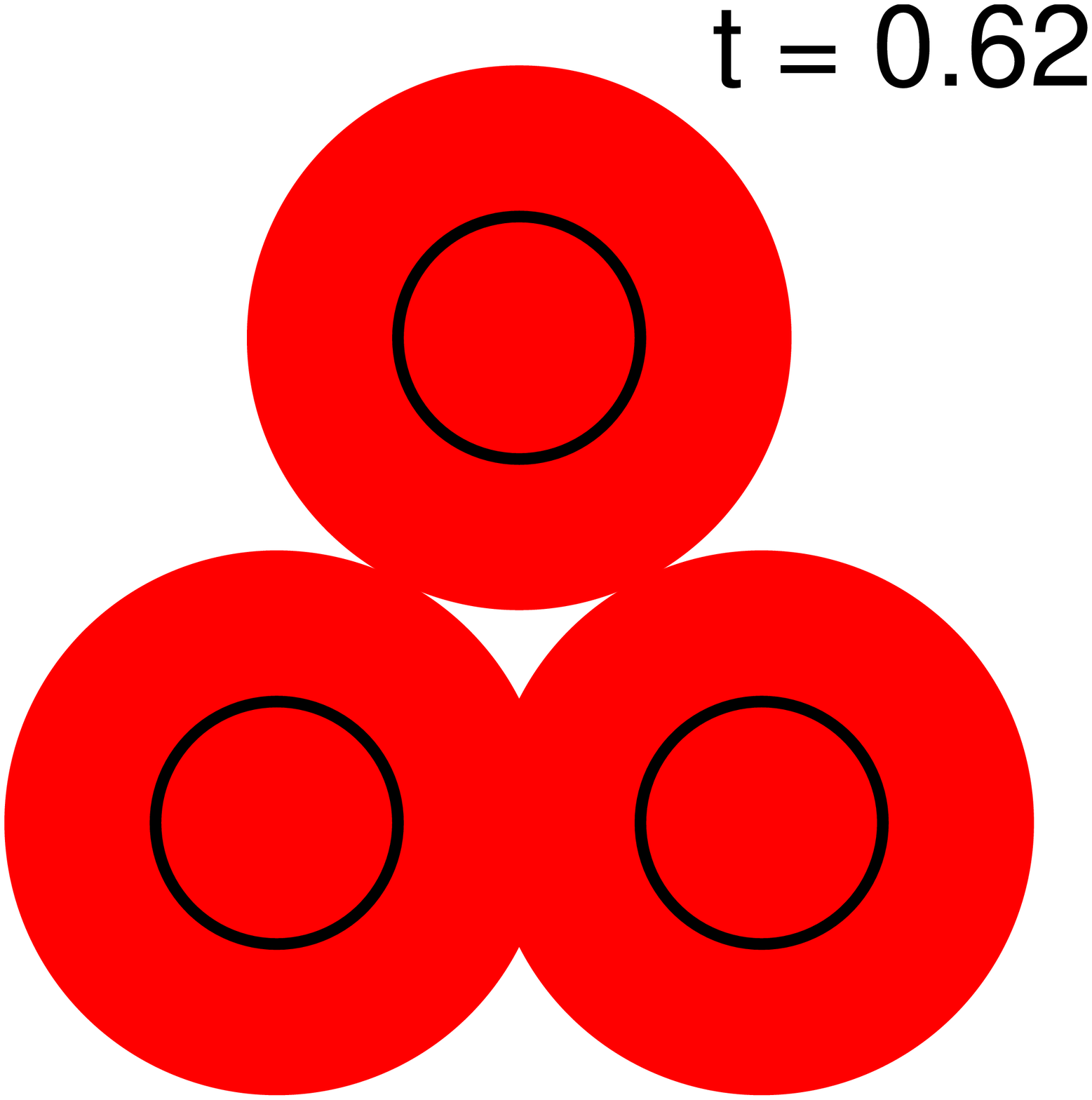} \\ 
\includegraphics[scale=0.15]{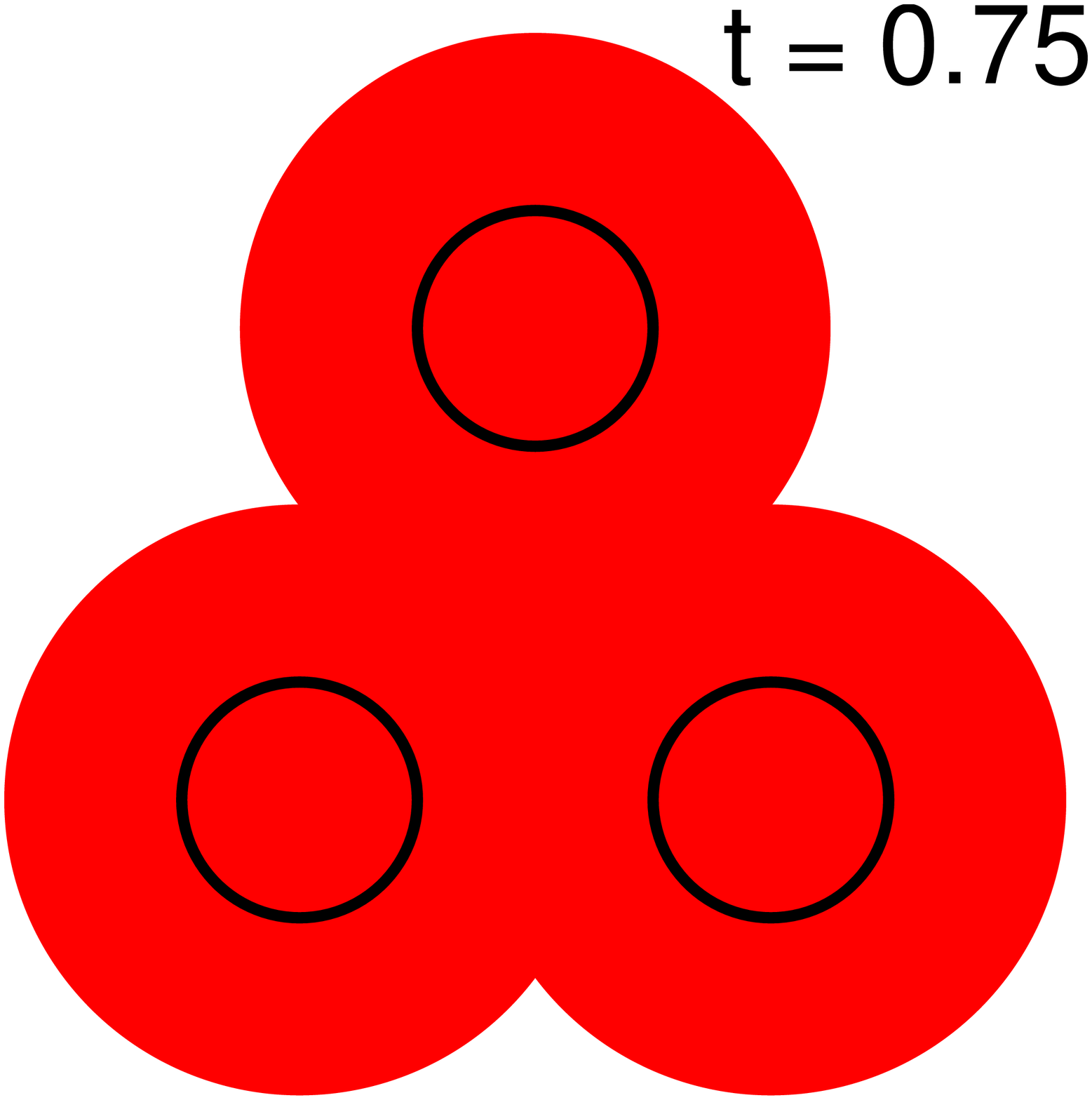} & \includegraphics[scale=0.15]{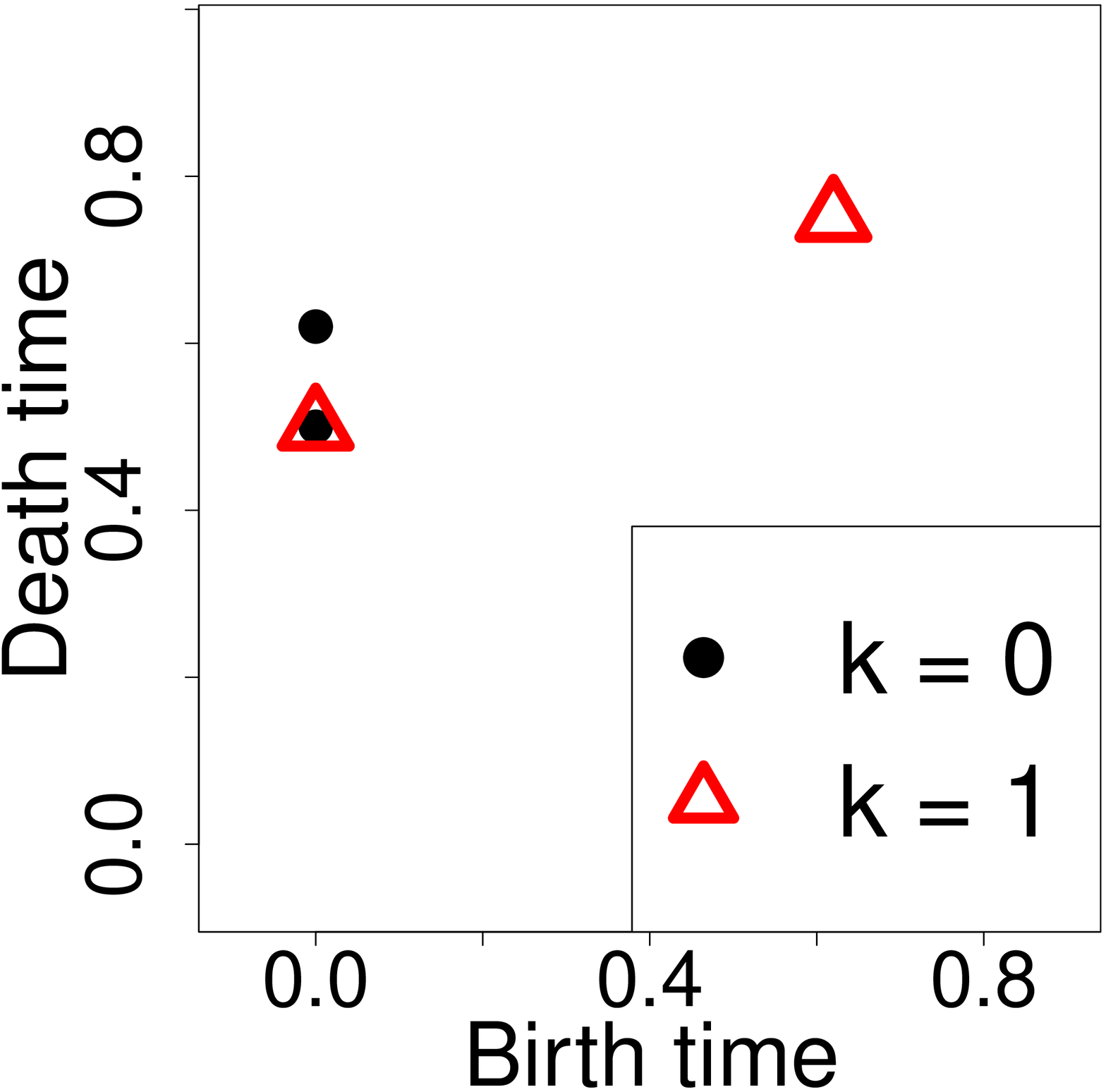} & 
\includegraphics[scale=0.15]{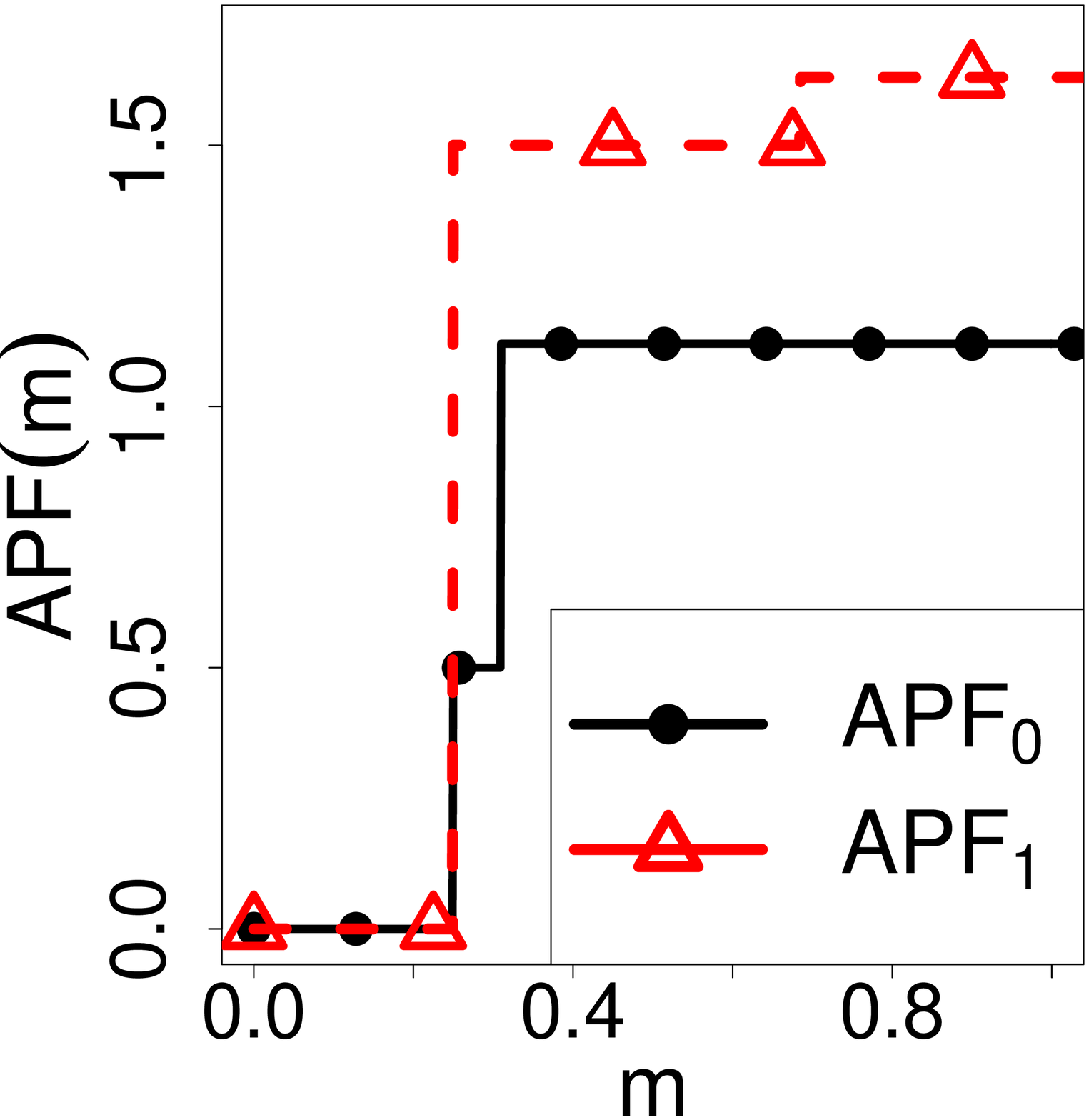} 
\end{tabular} 
\caption{The four first panels show a simple example of spherically growing circles centred at (-1,-1), (1,-1), and (0,1) and all with radii 0.5 when time is $0$. The fifth panel shows
the persistence diagram for the connected components ($k=0$) and the loops ($k=1$). The
final panel shows the corresponding accumulated persistence functions.} 
\label{fig:TDA_tuto}
 \end{figure}  

For $t\ge0$, let $C_{t}$ be the subset of points in $\mathbb R^2$ within distance $t$ from $C$.  
Thinking of $t$ as time, $C_{t}$ results when each point on $C$ grows as a disc with constant speed one. The algebraic topology (technically speaking the Betti numbers) changes exactly at the times $t=0,0.5,0.62,0.75$, see the first four panels of Figure~\ref{fig:TDA_tuto}: For each topological dimension $k=0,1$, let $t_i^{(k)}$ denote the time of the $i$th change. First, $C_{0}=C$ has three connected components and three loops as 
given above; we say that they are {\it born} at time $t_1^{(0)}=t_1^{(1)}=0$ (imaging there was nothing before time 0). 
Second, the loops disappear and two connected components merge into one connected component; we say that the loops and one of the connected components {\it die} at time $t_2^{(0)}=t_2^{(1)}=0.5$; 
since the two merging connected components were born at the same time, it is decided uniformly at random
which one should die respectively survive; the one which survives then represent the new merged connected component.
Third, at time $t_3^{(0)}=t_3^{(1)}=0.62$, a new loop is born and the two connected component merge into one which is represented by the oldest (first born) connected component whilst the other connected component (the one which was retained when the two merged at time $t_2^{(0)}=0.5$) 
 dies; the remaining connected component "lives forever" after time $0.62$ and it will be discarded in our analysis. 
Finally, at time $t_4^{(1)}=0.75$, the loop dies.     

Hence, for each $k=0,1$, there is a multiset of points specifying the appearance and disappearance of each $k$-dimensional topological feature as $t$ grows. A scatter plot of these points is called a {\it persistence diagram}, see Figure~\ref{fig:TDA_tuto} (bottom-middle panel): For $k=0$ (the connected components), the points are $(0,0.5)$ and $(0,0.62)$ (as $(0,\infty)$ is discarded from the diagram) with multiplicities 1 and 1, respectively; and for $k=1$ (the loops), the points are $(0,0.5)$ and $(0.62,0.75)$ with multiplicities 3 and 1, respectively. The term "persistence" refers to that 
 distant connected components and large 
loops are present for a long time; which here of course just corresponds to the three circles/connected components of $C$ and their loops persist for long whilst the last appearing loop has a short lifetime and hence is considered as "noise".

Usually in practice $C$ is not known but a finite subset of points $\{x_1,\ldots,x_N\}$ has been collected as a
sample on $C$, possibly 
with noise. Then we redefine $C_{t}$ as the union of closed discs  
of radius $t$ and with centres given by the point cloud. 
Hence the connected components of $C_0$ are just the points $x_1,\ldots,x_N$, and $C_0$ has no loops. 
For $t>0$, it is in general difficult to directly compute the connected components and loops of $C_t$, but
a graph in $\mathbb R^m$ (with $m\ge2$) can be constructed 
so that its connected components correspond to those of $C_t$ and moreover
the triangles of the graph may be filled or not in a way so that the loops of the obtained triangulation correspond to those of $C_t$. 

\noindent {\it Remark:} Such a construction can also be created in the case where  $C_{t}$ is the union of $d$-dimensional closed balls 
of radius $t$ and with centres given by a finite point pattern $\{x_1,\ldots,x_N\}\subset\mathbb R^d$. The construction is a so-called simplicial complex such as the \v{C}ech-complex, where $m$ may be much larger than $d$, or the Delaunay-complex (or alpha-complex), where $m=d$, and a technical result (the Nerve Theorem) establishes that it is possible to identify the topological features of $C_t$ by the \v{C}ech or Delaunay-complex, see e.g.\ \cite{Edelsbrunner:Harer:10}. It is unnecessary for this paper to understand the precise definition of these notions, but as $d=2$ or $d=3$ is small in our examples, it is computationally convenient to use the  Delaunay-complex. 
When $d=3$, we may still think of a 1-dimensional topological feature as a loop, i.e.\ a closed curve with no crossings; again the simplicial complex is used for the "book keeping" when determining the persistence of       
a loop. For example, a 2-dimensional sphere has no loops, and a torus in $\mathbb R^3$ has two. 
Finally, when $d\ge3$, a $k$-dimensional topological feature is a $k$-dimensional manifold (a closed surface if $k=2$) that cannot "be filled in", but for this paper we omit the precise definition since it is technical and not needed.

\subsubsection{Persistent homology for brain artery trees}\label{s:brain}

The left panel of Figure~\ref{fig:brain_artery_tree} shows an example of one of the 98 brain artery trees analysed in
\cite{steve:16}. The data for each tree specifies a graph in $\mathbb R^3$ consisting of a dense cloud of about $10^5$ vertices (points) together with the edges (line segments) connecting the neighbouring vertices; further details about the data are
given in Section~\ref{s:brains}.  
As in \cite{steve:16}, for each tree
we only consider the $k$-dimensional topological features when $k=0$ or $k=1$, using different types of data and sets $C_t$ as described below. Below we consider the tree in Figure~\ref{fig:brain_artery_tree} and let $B\subset\mathbb R^3$ denote 
the union of its edges.
 
Following~\cite{steve:16}, if $k=0$, let 
 $C_t=\{(x,y,z)\in B: z\le t\}$ be the sub-level set of the height function for the tree at level $t\ge0$ (assuming $C_t$ is empty for $t<0$).
Thus the 0-dimensional topological features at "time/level" $t$ are the connected components of $C_t$.  
 As illustrated in the left panel of Figure~\ref{fig:brain_artery_tree}, instead of time we may think of $t$ as "water level": As the water level increases,  
 connected components of the part of $B$ surrounded by water (the part in blue) may be born or die; we refer to this as sub-level persistence. 
 As in Section~\ref{s:toy}, we represent the births and deaths of the connected component in a persistence diagram which is shown in Figure~\ref{fig:brain_artery_tree} (middle panel).  
 The persistence of the connected components for all brain artery trees will be studied in several examples later on.

As in \cite{steve:16}, if $k=1$, we let $B$ be represented by a point pattern $C$ of 3000 points subsampled from $B$, and redefine $C_t$ to be the union of balls of radii $t\ge0$ and centres given by $C$ (as considered in the remark at the end of Section~\ref{s:toy}). The loops of $C_t$ are then determined by the corresponding Delaunay-complex.
The right panel of Figure~\ref{fig:brain_artery_tree} shows the corresponding persistence diagram.
 The persistence of the loops for all trees will be studied in the examples to follow.

\begin{figure}
\centering \setlength{\tabcolsep}{0.03\textwidth}
\begin{tabular}{ccc} 
 \includegraphics[scale=0.23]{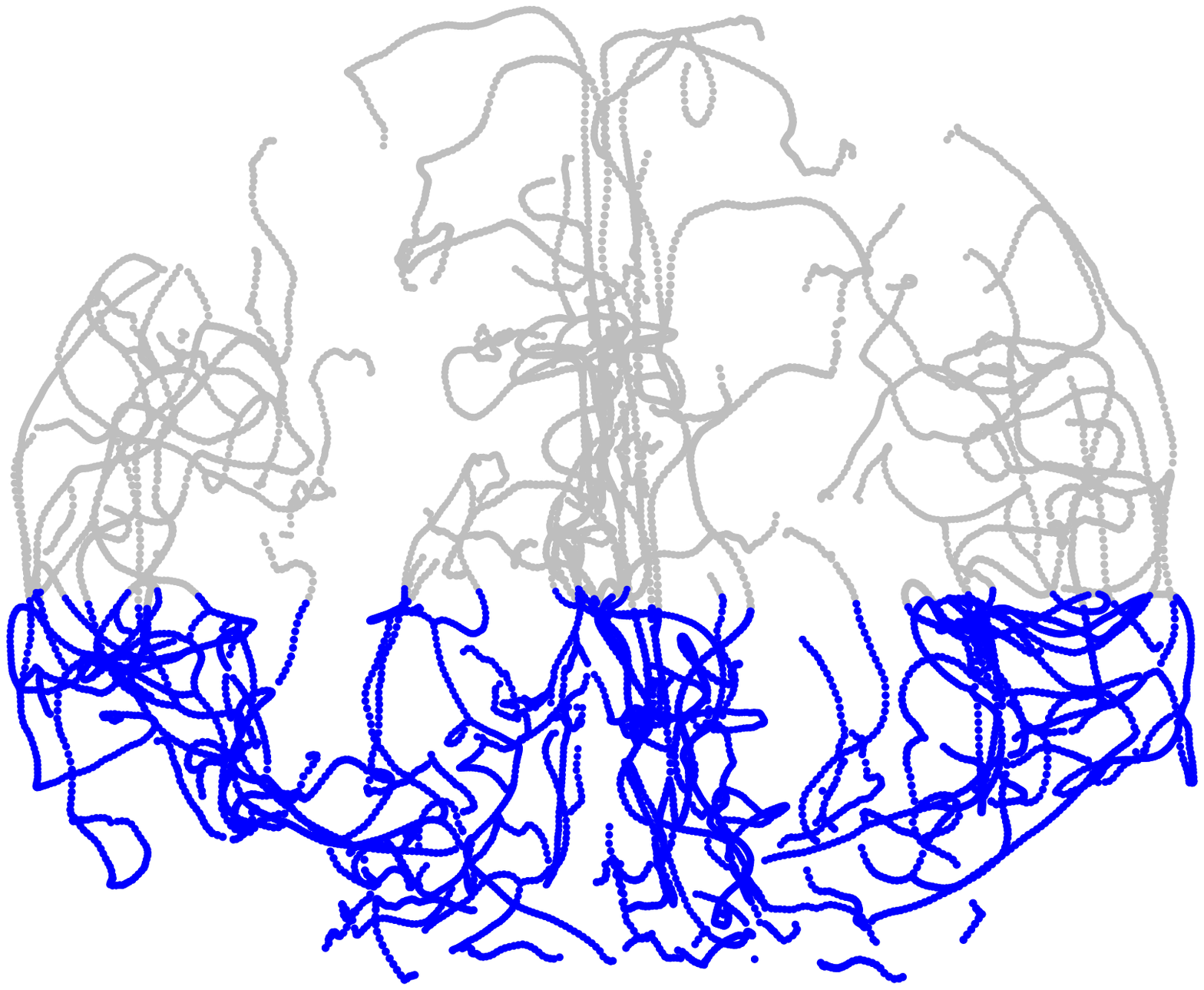}&  \includegraphics[scale=0.15]{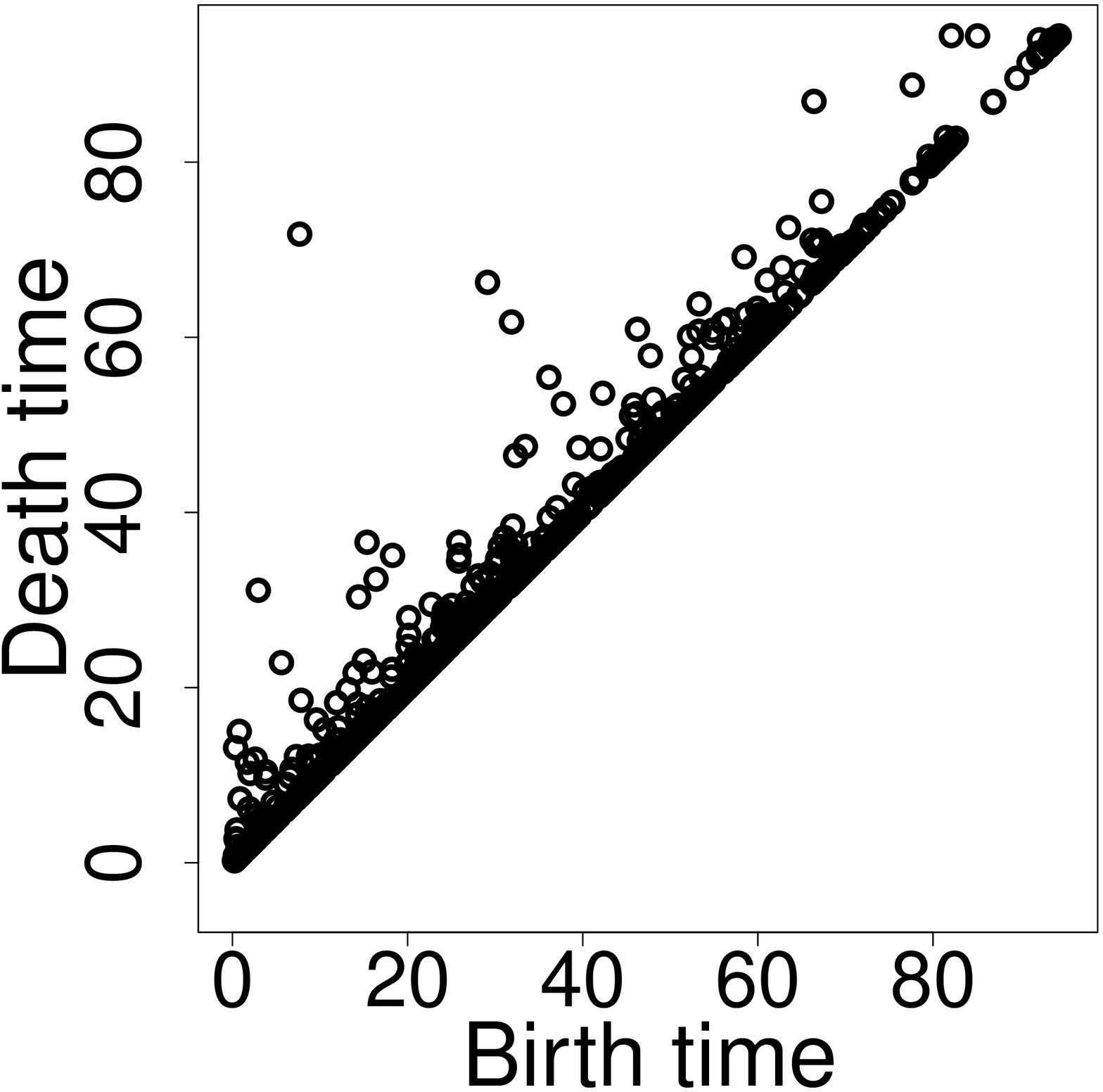} &   \includegraphics[scale=0.15]{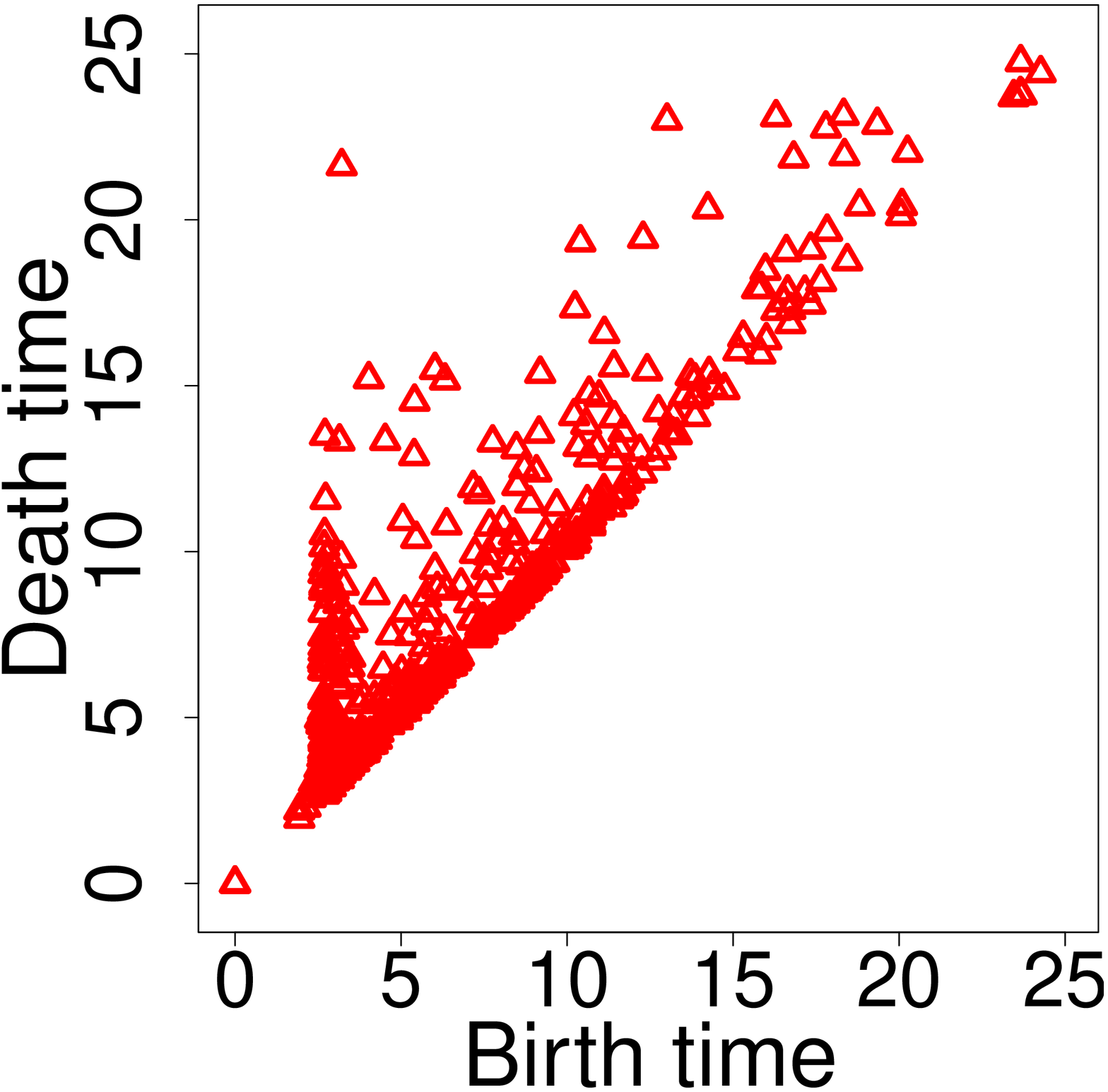}   
\end{tabular}
\caption{A brain artery tree with the "water level" indicated (left panel) and the persistence diagrams of connected components (middle panel) and loops (right pane).}
\label{fig:brain_artery_tree}
 \end{figure}

\subsection{Further background and objective}\label{s:1:background}

The persistence diagram is a popular graphical representation of the persistence of the topological features of a sequence of compact sets $C_t\subset\mathbb R^d$, $t\ge0$.
As exemplified above it consists 
for each topological dimension $k=0,\ldots,d-1$ 
of a multiset $\mathrm{PD}_k$ of points $(b_i,d_i)$ with multiplicities $c_i$, where $b_i$ and $d_i$ is a pair of birth-death times for a $k$-dimensional topological feature obtained as time $t$ grows.  

In the majority of literature on TDA, including the analysis in \cite{steve:16} of brain artery trees, long lifetimes  
are of main interest whereas short lifetimes are considered as topological noise.
Short lifetimes are of interest in the study of complex structures 
such as branch polymers and fractals, see \cite{macpherson:12}; and for brain artery trees \cite{steve:16} noticed in one case that "not-particularly-high persistence have the most distinguishing power in our specific application". 
In our examples we demonstrate that short lifetimes will also be of key interest in many  situations, including when analysing the brain artery trees dataset from \cite{steve:16}.

\cite{chazal:etal:13}~and \cite{chen:etal:15} note that  it is
difficult to apply statistical methodology to persistent diagrams. 
Alternative functional summary statistics have been 
suggested: \cite{bubenik:12:publish} introduces a sequence of 
one-dimensional functions called the persistent landscape, 
where his first function is denoted $\lambda_1$ and is considered to be of main interest, 
since it provides a measure of the dominant topological
features, i.e.\ the longest lifetimes; therefore we call $\lambda_1$ 
the dominant function. 
\cite{chazal:etal:13} introduce 
the silhouette which is 
a weighted average of the functions of the persistent landscape, 
where the weights control whether the focus is on topological features with long or short lifetimes. 
Moreover, \cite{chen:etal:15} consider a kernel estimate of 
the intensity function for the persistent diagram viewed as a point pattern. 
The dominant function, the silhouette, and the intensity estimate are one-dimensional functions and hence easier to handle than the persistence diagram, however, they provide selected and not
 full information about the persistence diagram. 
In Section~\ref{s:1:APF}, we introduce another one-dimensional 
functional summary statistic called the accumulative persistence function 
and discuss its advantages and how it differs from the existing functional summary statistics.  

 \subsection{The accumulated persistence function}\label{s:1:APF}
 
For simplicity and specificity, for each topological dimension $k=0,1,\ldots,d-1$, we always assume that the persistence diagram $\mathrm{PD}_k=\{(b_1,d_1,c_1),\ldots,(b_n,d_n,c_n)\}$ is such that $n<\infty$ and $0\le b_i<d_i<\infty$ for $i=1,\ldots,n$.
This assumption will  
be satisfied in our examples (at least with probability one).  
 Often in the TDA literature, $\mathrm{PD}_k$ is transformed to the \textit{rotated and rescaled persistence diagram} (RRPD) given by  
 $\mathrm{RRPD}_k= \lbrace 
(m_1,l_1,c_1), \ldots, (m_n,l_n,c_n) \rbrace$, where $m_i= (b_i+d_i)/2$ is the meanage and $l_i=d_i-b_i$ is the lifetime. 
This transformation is useful when 
 defining our \textbf{\textit{accumulative persistence function} (APF)} by 
\begin{equation}\label{e:(a)}
\mathrm{APF}_k(m)=\sum_{i=1}^n c_i l_i 1(m_i\le m),\quad m\geq 0,
\end{equation}
where $1(\cdot)$ is the indicator function and we suppress in the notation 
that $\mathrm{APF}_k$ is a function of $\mathrm{RRPD}_k$. The remainder of this section comments on this definition. 

Formally speaking, when $\mathrm{RRPD}_k$ is considered to be random, it is viewed as
a finite point process with multiplicities, see e.g.\ \cite{daley:vere-jones:03}. 
It what follows it will always be clear from the context whether $\mathrm{PD}_k$ and $\mathrm{RRPD}_k$ 
are considered as being random or observed, and hence whether  $\mathrm{APF}_k$ is a deterministic or random function. 
In the latter case, because $\mathrm{APF}_k(m)$ is an accumulative function, its random fluctuations typically increase as $m$ increases.  

Depending on the application, the jumps and/or the shape of $\mathrm{APF}_k$ may be of interest as demonstrated later in our examples.
A large jump of $\mathrm{APF}_k$ corresponds to a large lifetime (long persistence). In the simple example shown in Figure~\ref{fig:TDA_tuto}, both jumps of $\mathrm{APF}_0$ are large and indicate the three connected components (circles), whilst only the first jump of $\mathrm{APF}_1$ is large and indicates the three original loops. For the more complicated examples considered in the following it may be hard to recognize the individual jumps. 
In particular, as in the remark at the end of Section~\ref{s:toy}, suppose $C_t$ is the union of $d$-dimensional balls of radius $t$ and with centres given by a finite point pattern $\{x_1,\ldots,x_N\}\subset\mathbb R^d$. Roughly speaking we may then have the following features as illustrated later in Example~1. 
For small meanages $m$, jumps of $\mathrm{APF}_0(m)$  correspond to balls that merge together for small values of $t$.
Thus, if the point pattern is aggregated (e.g.\ because of clustering), we expect that
$\mathrm{APF}_0(m)$ has jumps and is hence large for small meanages $m$, whilst
if the point pattern is regular (typically because of inhibition between the points), we expect the jumps of $\mathrm{APF}_0(m)$ to happen and to be large for modest values of $m$ (as illustrated later in the middle panel of Figure~\ref{fig:simEx2} considering curves for the Mat{\'e}rn cluster process and the determinantal point process). 
For large meanages, jumps of $\mathrm{APF}_0(m)$ are most likely to happen in the case of aggregation. Accordingly, the shape of $\mathrm{APF}_0$ can be very different for these two cases (as illustrated in the first panel of Figure~\ref{fig:simEx2}). 
Similar considerations lead us to expect different shapes of $\mathrm{APF}_1$ for different types of point patterns; we expect that 
$\mathrm{APF}_1(m)$ is large respective small for the case of aggregation respective regularity when $m$ is small, and the opposite happens when $m$ is large (as illustrated in the last panel of Figure~\ref{fig:simEx2}).  

Clearly, $\mathrm{RRPD}_k$ is in one-to-one correspondence to $\mathrm{PD}_k$. In turn,
 if all $c_i=1$ and the $m_i$ are pairwise distinct, 
 then 
 there is a one-to-one correspondence 
between $\mathrm{RRPD}_k$ and its corresponding $\mathrm{APF}_k$. For $k=0$, 
this one-to-one correspondence would easily be lost if we had used $b_i$ in place of $m_i$ in \eqref{e:(a)}. 
We need to be careful with not over-stating this possible one-to-one correspondence. 
For example, imagine we want to compare two APFs with respect to $L^q$-norm ($1\le q\le\infty$) and let 
$\mathrm{PD}_k^{(1)}$ and $\mathrm{PD}_k^{(2)}$ be the underlying persistence diagrams. However, when points $(b,d)$ close to the diagonal are considered as topological noise (see Section~\ref{s:toy}), usually the  
so-called bottleneck distance $W_\infty(\mathrm{PD}_k^{(1)},\mathrm{PD}_k^{(2)})$ is used, see e.g.\ \cite{fasy:etal:14}. Briefly, for $\epsilon>0$, let 
$\mathcal N=\{(b,d):\,b \le d,\,l\le 2 \epsilon\}$ be the set of points at distance $\sqrt{2}\epsilon$ of the diagonal in the persistence diagram, and let $S(b,d)=\{(x,y):\,|x-b|\le \epsilon,\,|y-d|\le \epsilon\}$ be the square with center $(b,d)$, sides parallel to the $b$- and $d$-axes, and of side length $2\epsilon$. Then
$W_\infty(\mathrm{PD}_k^{(1)},\mathrm{PD}_k^{(2)})\le \epsilon$ if $\mathrm{PD}_k^{(2)}$ has exactly one point in each square $S(b_i,d_i)$, with $(b_i,d_i)$ a point of $\mathrm{PD}_k^{(1)}$ (repeating this condition $c_i$ times).  
However, small values 
of $W_\infty(\mathrm{PD}_k^{(1)},\mathrm{PD}_k^{(2)})$ does not correspond to 
closeness of the two
corresponding APFs with respect to $L^q$-norm.

Note that
the dominant function, the silhouette, 
and the intensity estimate (see Section~\ref{s:1:background})
are in general not in a one-to-one correspondence with $\mathrm{RRPD}_k$. 
Like these functions, $\mathrm{APF}_k$ is a one-dimensional function, 
and so it is easier to handle than the sequence of functions 
for the persistent landscape in~\cite{bubenik:12:publish}
and the intensity estimate in~\cite{chen:etal:15}
--- e.g.\ confidence regions become easier to plot.
Contrary to the dominant function and the silhouette, 
the APF provides information about topological features 
without distinguishing between long and short lifetimes.

 \subsection{Outline}\label{s:1:outlines}

Our paper discusses various methods based on APFs in different contexts and illustrated  
by simulation studies related to spatial point process applications
and 
by re-analysing the brain artery trees dataset previously analysed in~\cite{steve:16}.  
Section~\ref{s:1:dataset} specifies the setting for these examples.
Sections~\ref{s:conf for APFS}, \ref{s:one sample}, and \ref{s:two sample problem} consider the case of a single APF, a sample of APFs, and two samples of APFs, respectively. Further examples and details appear in Appendix~A-F.

\section{Datasets}\label{s:1:dataset}

\subsection{Simulated data}\label{s:1:simulated dataset}

In our simulation studies  
we consider a planar point cloud, i.e.\  a finite point pattern $\{x_1,\ldots,x_N\} \subset \R^2$,
and study as at the end of Section~\ref{s:toy} how the topological features of $C_t$, the union of closed discs of radii $t$ and centred at $x_1,\ldots,x_N$, change as $t$ grows.  
Here
$\lbrace x_1,\ldots,x_N \rbrace$ will be a realisation of a point process 
$\bX\subset \R^2$, where the count $N$ is finite. 
Thus $\mathrm{PD}_k$ and $\mathrm{RRPD}_k$ can be viewed as finite planar point processes (with multiplicities) and
 $\mathrm{APF}_k$ as a random function. Note that $N$ may be random, and conditional on $N$, the points in $\bX$ are not necessarily  
independent and identically distributed (IID). This is a common situation in spatial statistics, e.g.\ if the focus is on the point process $\bX$ and the purpose 
is to assess the goodness of fit for a specified point process model of $\bX$ when $\lbrace x_1,\ldots,x_N \rbrace$ is observed.

  \subsection{Brain artery trees dataset}\label{s:brains}

The dataset in~\cite{steve:16} comes from 98 brain artery trees which can be included  
 within a cube of side length at most 206 mm; one tree is excluded 
"as the java/matlab function crashed" (e-mail correspondence with Sean Skwerer). 
They want to capture how the arteries bend through space and to detect age and gender effects.
For $k=0$, sub-level persistence of the connected components of each tree represented by a union of line segments is considered, 
cf.\ Section~\ref{s:brain}; then for all meanages, $m_i\le 137$; and the number of connected components is always below 3200. 
 For $k=1$,  
 persistence of the loops for the union of growing balls with centres at a point cloud representing the tree is considered,  
 cf.\ Section~\ref{s:brain}; 
the loops  have a finite death time but 
some of them do not die during the allocated time $T=25$ 
(that is, \cite{steve:16} stop the growth of balls when $t>25$). Thus we shall only consider meanages $m_i\le 25$; then the number of loops is always below 2700. 
For each tree and $k=0,1$, most $c_i=1$ and sometimes $c_i>1$.

For each tree and $k=0,1$, 
\cite{steve:16} use only the 100 largest lifetimes in their analysis.  
Whereas their principal component analysis clearly reveal age effects, 
their permutation test based on the mean lifetimes for the male and females subjects
only shows a clear difference when considering $\mathrm{PD}_1$. 
Accordingly, when demonstrating the usefulness of $\mathrm{APF}_0$ and $\mathrm{APF}_1$, we will focus on the gender effect and  consider the same 95 trees as in \cite{steve:16} (two transsexual subjects are excluded) obtained from $46$ female
subjects and $49$ male subjects; in contrast to \cite{steve:16}, we consider all observed meanages and lifetimes. 
In accordance to the allocated time $T=25$, we need to redefine $\mathrm{APF}_1$ by 
\begin{equation}\label{e:(b)}
\mathrm{APF}_1(m)=\sum_{i=1}^n c_i l_i 1(m_i\le m,\,m_i+l_i/2\le T),\quad m\geq 0.
\end{equation}
For simplicity we use the same notation $\mathrm{APF}_1$ in \eqref{e:(a)} and \eqref{e:(b)}; although all methods and results in this paper will be presented with the definition \eqref{e:(a)} in mind, they apply as well when considering \eqref{e:(b)}. 

Finally, we write 
$\mathrm{APF}^F_k$ and $\mathrm{APF}^M_k$ to distinguish between APF's for females and males, respectively.

\section{A single accumulated persistence function} \label{s:conf for APFS}

There exists several constructions and
results on confidence sets for persistence diagrams when the aim is to separate topological signal from noise, see~\cite{fasy:etal:14}, \cite{chazal:fasy:14}, and the references therein.
Appendix~\ref{s:sep} and its accompanying Example~5 discuss 
the obvious idea of transforming such a confidence region into one for an accumulate persistence function, where the potential problem is that the bottleneck metric is used for persistence diagrams and this is not corresponding to closeness of APFs, cf.\ Section~\ref{s:1:APF}.
In this section we focus instead on spatial point process model assessment using APFs or more traditional tools.

Suppose a realization of a finite spatial point process $\bX_0$ has been observed and copies $\bX_1,\ldots,\bX_r$ have been simulated under a claimed model for $\bX_0$ 
so that the  joint distribution of $\bX_0,\bX_1,\ldots, \bX_r$ should be exchangeable. That is, for any permutation 
$(\sigma_0,\ldots,\sigma_r)$ of $(0,\ldots,r)$, $(\bX_{\sigma_0},\ldots, 
\bX_{\sigma_r})$ is claimed to be distributed as $(\bX_0,\ldots, \bX_r)$; e.g.\ this is the case if $\bX_0,\bX_1,\ldots, \bX_r$ are IID. This is a 
common situation for model assessment in spatial point process analysis when a distribution for $\bX_0$ has been specified (or estimated),
see 
e.g.\   
\cite{Baddeley:Rubak:Wolf:15} and \cite{moeller:waagepetersen:16}. 
Denote the $\mathrm{APF}_k$s for $\bX_0,\ldots, \bX_r$  by 
$A_0,\ldots,A_r$, respectively, and the null hypothesis that the joint distribution of  
$A_0,\ldots,A_r$ is exchangeable by $\Hh_0$. Adapting ideas from \cite{myllymaki:etal:16}, 
we will discuss how 
to construct a goodness-of-fit test for $\Hh_0$ based on 
a so-called global rank envelope 
 for 
 $A_0$; their usefulness will be demonstrated in Example~1.

In functional data analysis, to measure how extreme $A_0$ is in comparison 
to $A_1,\ldots,A_r$, a so-called depth function is used for ranking 
$A_0,\ldots,A_r$, see e.g.\ \cite{lopez:romo:09}.  
We suggest using a depth ordering called extreme rank in \cite{myllymaki:etal:16}:
Let $T>0$ be a 
user-specified parameter chosen such that it is the behaviour of $A_0(m)$ for $0\le m\le T$
which is of interest. 
 For $l=1,2,\ldots$, define the $l$-th bounding curves of $A_0,\ldots,A_r$ 
 by 
\[ A^{l}_{\mathrm{low} } ( m) = \min_{i = 0,\ldots, r} {} \hspace{-1.5mm} ^{l}  A_i(m)  \quad \mathrm{and} \quad   A^{l}_{\mathrm{upp} } ( m) =\max_{i = 0,\ldots, r} {} \hspace{-1.5mm} ^{l}  A_i(m), \quad 0\leq m \leq T, \]
where $ \min {} \hspace{-0.5mm} ^{l}$ and $ \max  {} \hspace{-0.5mm} ^{l}$ 
denote the $l$-th smallest  and largest values, respectively, and where $l\le r/2$.  Then, for $i=0,\ldots,r$, the extreme rank of $A_i$ 
with respect to $A_0,\ldots,A_r$  is 
\begin{equation*}
R_i = \max \left \lbrace l \ : \  A^{l}_{\mathrm{low} } ( m)  \leq A_i(m) \leq  A^{l}_{\mathrm{upp} } (m)  \quad  \mbox{for all } m \in [0,T]    \right \rbrace.
\end{equation*} 
The larger $R_i$ is, the deeper or more central $A_i$ is among  $A_0,\ldots,A_r$.

Now, for a given  $\alpha \in (0,1)$, the extreme rank ordering is used to define the $100(1-\alpha)\%$-global rank envelope as the band delimited by the curves $A^{l_\alpha}_{\mathrm{low} }$ and $A^{l_\alpha}_{\mathrm{upp} }$ where 
\begin{equation*}
l_\alpha = \max \left\lbrace l \ : \ \frac{1}{r+1} \sum_{i=0}^{r}  1 (  R_i <l ) \leq  \alpha    \right\rbrace.
\end{equation*}
Under $\Hh_0$,  with probability at least $ 1-\alpha$,
\begin{equation}\label{e:APFenvelope}
A^{l_\alpha}_{\mathrm{low} } (m)   \leq A_0(m) \leq A^{l_\alpha}_{\mathrm{upp} } (m) \quad  \mbox{for all } m \in [0,T],
\end{equation}
see~\cite{myllymaki:etal:16}. Therefore,  the $100(1-\alpha)\%$-global rank envelope is specifying a  
conservative statistical test called the extreme rank envelope test and which
accepts  $\Hh_0$ at level $100\alpha\%$ if  \eqref{e:APFenvelope} is satisfied or equivalently if 
\begin{align}\label{e:APFtest}
 \frac{1}{r+1} \sum_{i=0}^{r}  1 (  R_i <R_0 ) >\alpha,
\end{align}
cf.\ \cite{myllymaki:etal:16}.  
A plot of the extreme rank envelope allows a graphical interpretation of the 
extreme rank envelope test and may in case of rejection suggest an alternative  model for $\bX_0$.

There exist alternatives to the extreme rank envelope test,
in particular a liberal extreme rank envelope test and a so-called global scaled maximum absolute difference envelope, see \cite{myllymaki:etal:16}. 
It is also possible to combine several extreme rank envelopes, for instance by combining $\mathrm{APF}_0$ and $\mathrm{APF}_1$, see \cite{myllymaki:etal:16b}.
In the following example we focus on \eqref{e:APFenvelope}-\eqref{e:APFtest} and briefly remark on results obtained by combining $\mathrm{APF}_0$ and $\mathrm{APF}_1$.

\paragraph*{Example 1 (simulation study).} Recall that a homogeneous Poisson process is a model for 
complete spatial randomness (CSR), see e.g.\ \cite{moeller:waagepetersen:00} and the simulation in the 
first panel of Figure~\ref{fig:simEx2}. 
Consider APFs $A_0, A_1, \ldots, A_r$ corresponding 
to  independent point processes $\bfX_0, \bfX_1, \ldots, \bfX_r$ defined on a unit square and where $\bX_i$ for $i>0$ is CSR with a given intensity $\rho$ (the mean number of points). Suppose $\bfX_0$ 
is claimed to be CSR with intensity $\rho$, however, the model for $\bfX_0$ is given by one of the following four point process models, which we refer to as the true model:
\begin{enumerate} 
\item[(a)] CSR; hence the true model agrees with the claimed model. 
\item[(b)] A Baddeley-Silverman cell process; this has the same 
second-order moment properties as under CSR, see \cite{baddely:silverman:84}. Though from a 
mathematical point of view, it is a cluster process, simulated realisations 
will exhibit both aggregation and regularity at different 
scales, see the second panel of Figure~\ref{fig:simEx2}.  
\item[(c)] A Matérn cluster process; this is a model for clustering where each cluster is a homogenous Poisson process within a disc and the centers of the discs are not observed and constitute a stationary Poisson process, see \cite{matern:86}, \\ \cite{moeller:waagepetersen:00}, and the third panel of Figure~\ref{fig:simEx2}.
\item[(d)] A most repulsive Bessel-type determinantal point process (DPP); 
this is a model for regularity, see \cite{LMR15}, \cite{biscio:lavancier:16}, and the fourth panel of Figure~\ref{fig:simEx2}. 
\end{enumerate}
We let $\rho=100$ or $400$. This specifies completely the 
models in (a) and (d), whereas the remaining parameters in the cases (b)-(c) are defined to be the same as those 
used in~\cite{robins:turner:16}. In all cases of
Figure~\ref{fig:simEx2},  $\rho=400$. Finally, following the recommendation in~\cite{myllymaki:etal:16}, we let $r=2499$. 

\begin{figure}
\begin{center}
 \resizebox{\columnwidth}{!}{
\begin{tabular}{cccc} 
\includegraphics[scale=0.2]{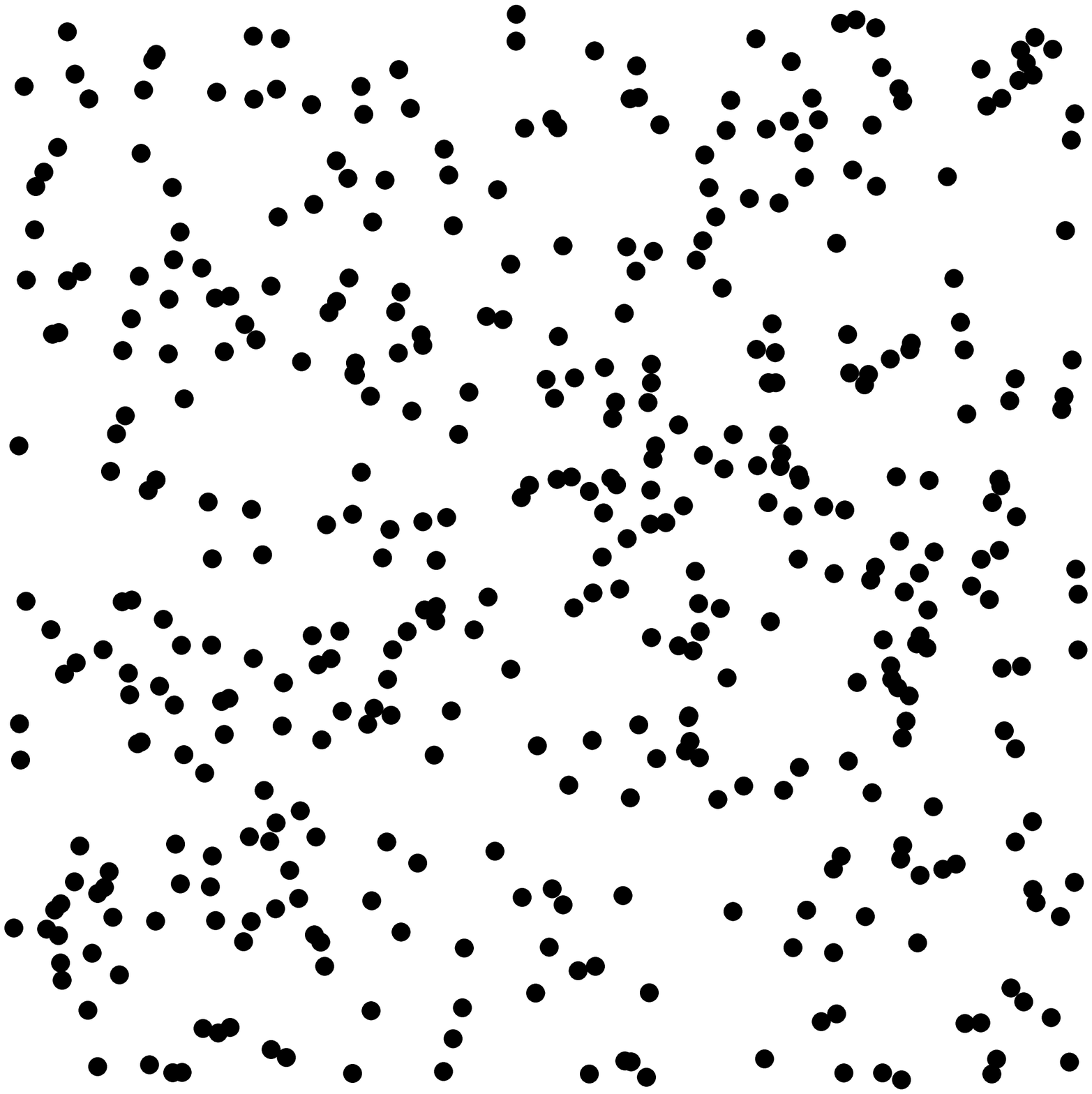}& \includegraphics[scale=0.2]{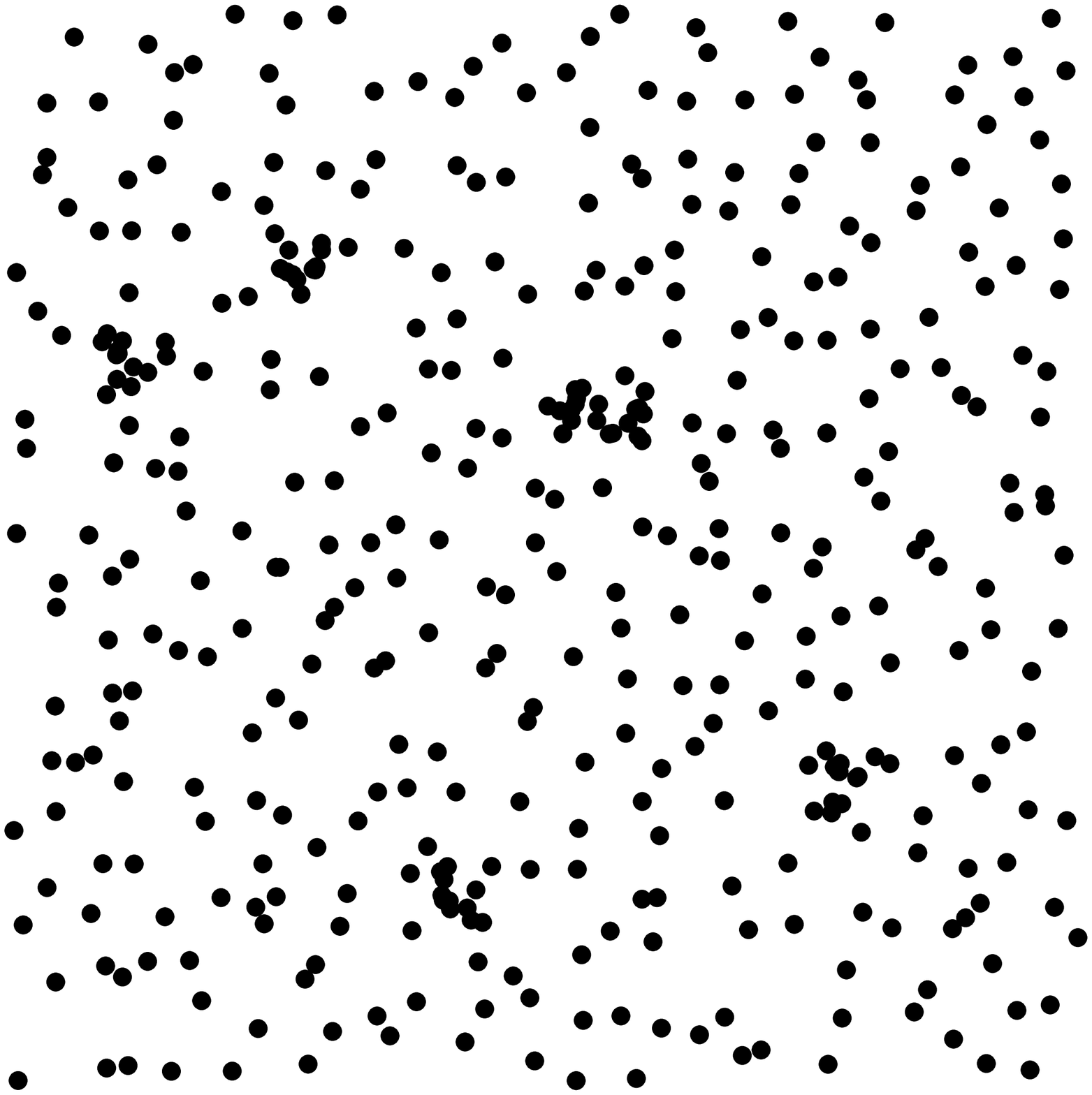} &
\includegraphics[scale=0.2]{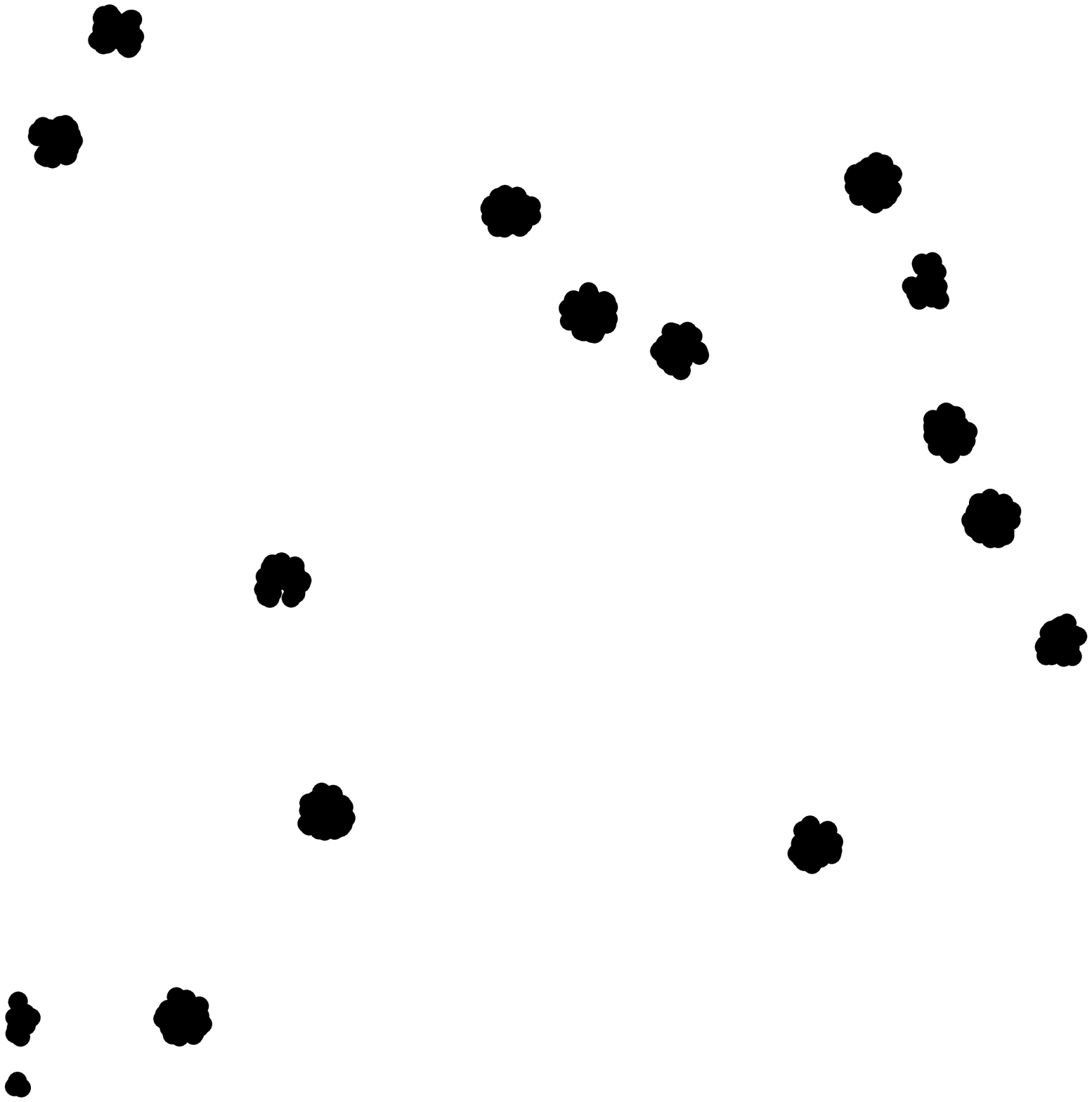}  & \includegraphics[scale=0.2]{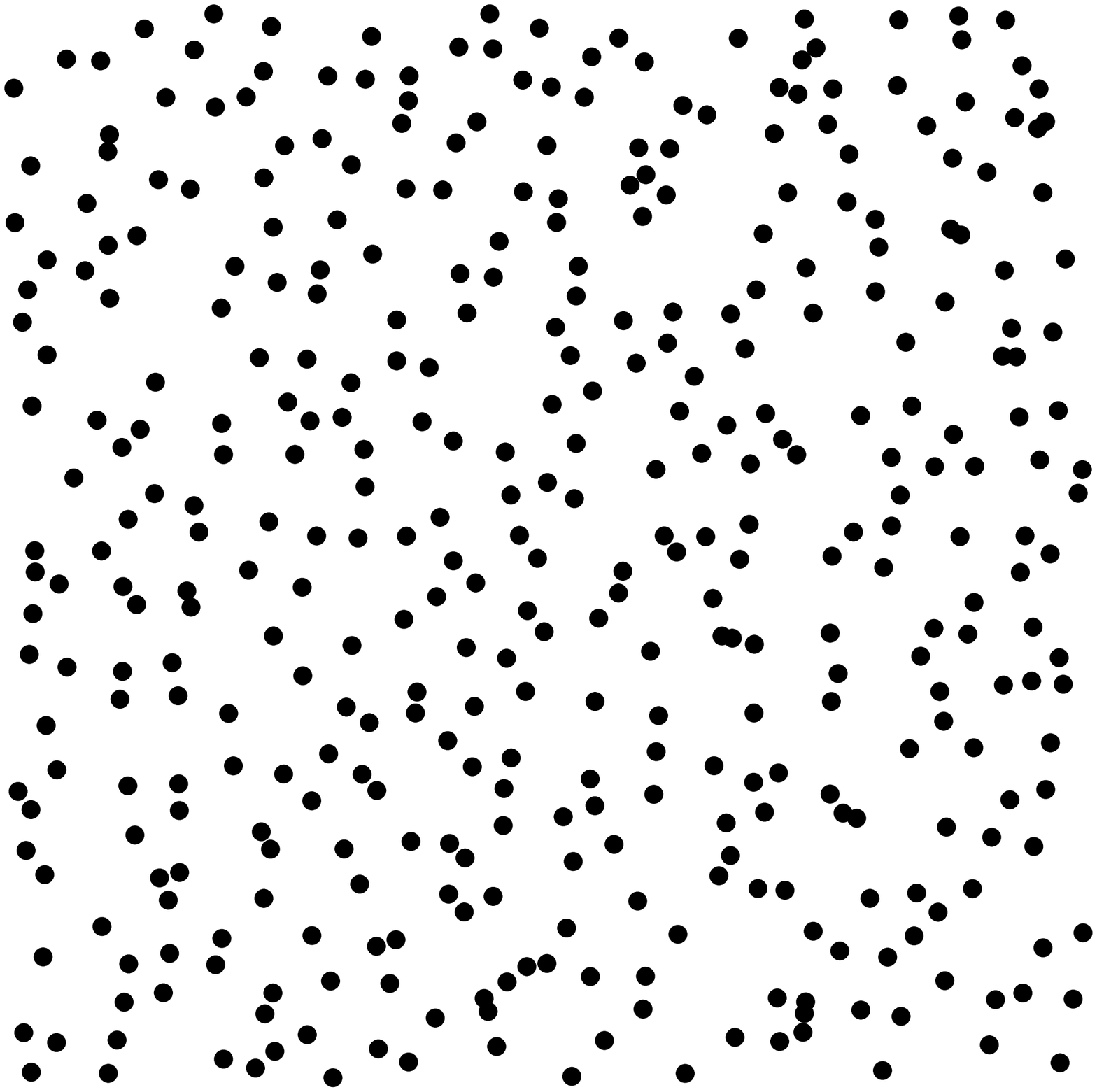}
\end{tabular} }
\caption{Simulated point patterns for a homogeneous Poisson process (first panel), 
a  Baddeley-Silverman cell process (second panel), 
a Matérn cluster process (third panel), 
and a most repulsive Bessel-type DPP (fourth panel).}\label{fig:simEx2}
\end{center}
\end{figure}

For each value of $\rho=100$ or $400$, we simulate each point process in (a)-(d)
 with the {\sf \bf R}-package 
{\tt spatstat}. Then, for each dimension $k=0$ or $1$, we compute the extreme rank envelopes 
and extreme rank envelope tests with the {\sf \bf R}-package {\tt spptest}. We repeat all this $500$ times. 
Table~\ref{table:reject_envelope_intensite}  shows for each case (a)-(d) the percentage of rejection of the hypothesis that $\bfX_0$ 
is a homogeneous Poisson process with known intensity $\rho$. 
In case of CSR,
the type one error of the test is small except when $k=0$ and $\rho=100$. As expected in case of (b)-(d), the power of the test is increased when $\rho$ is increased. 
For both the Baddeley-Silverman process and the DPP, when $k=0$ and/or $\rho=400$, the power is high and even 100\% in two cases.
For the Matérn cluster process, the power is 100\% when both $\rho=100$ and 400; 
this is also the case when instead the radius of a cluster becomes 10 times larger and hence it is not so easy to distinguish the clusters as in the third panel of Figure~\ref{fig:simEx2}. 
 When we combine the extreme rank envelopes for $\mathrm{APF}_0$ and $\mathrm{APF}_1$, the results are better or close to the best results obtained when considering only one extreme rank envelope.

  \begin{table}
\newcommand{\mc}[3]{\multicolumn{#1}{#2}{#3}}
\def\arraystretch{0.9}
 \resizebox{\columnwidth}{!}{
\begin{tabular}[c]{l|c c c c c c c c}
\mc{1}{c|}{}                          &  \mc{2}{c|}{CSR}       & \mc{2}{c|}{DPP} & \mc{2}{c|}{Matérn cluster} & \mc{2}{c|}{Baddeley-Silverman}   \\ 
\mc{1}{c|}{}                          &  $\rho=100$ & \mc{1}{c|}{$\rho=400$}       &  $\rho=100$ & \mc{1}{c|}{$\rho=400$}   &  $\rho=100$ & \mc{1}{c|}{$\rho=400$}  &  $\rho=100$ & \mc{1}{c|}{$\rho=400$}   \\ \cline{1-9}
\mc{1}{l|}{ $\mathrm{APF}_0$  }          						& 3.6  					&  \mc{1}{c|}{4}         		& 77.4                                      &  \mc{1}{c|}{100}              & 100  &  \mc{1}{c|}{100}        & 45.6  &  \mc{1}{c|}{99.6}  \\ 
\mc{1}{l|}{ $\mathrm{APF}_1$ }           						& 3.8 					&  \mc{1}{c|}{4.6}      		&  28.2 					&  \mc{1}{c|}{57.8}             & 100  &   \mc{1}{c|}{100}       & 65.8   &     \mc{1}{c|}{100} \\
\mc{1}{l|}{ $\mathrm{APF}_0$, $\mathrm{APF}_1$ }            & 4.8 &  \mc{1}{c|}{3.6}       &  82.4 &  \mc{1}{c|}{100}     & 100  &   \mc{1}{c|}{100}       & 60.8   &     \mc{1}{c|}{100}
\end{tabular}
 }
\caption{Percentage of point patterns for which the $95\%$-extreme rank envelope test rejects the hypothesis of CSR (a homogeneous Poisson process on the unit square with intensity $\rho=100$ or  $\rho=400$)
when the true model is either CSR or one of three alternative point process models.}\label{table:reject_envelope_intensite}
\end{table}

Figure~\ref{fig:graph:extreme_rank_test} illustrates for one of the $500$ repetitions and for each dimension $k=0$ and $k=1$
the deviation of $\mathrm{APF}_k$ from the extreme rank envelope obtained when the true model is not CSR. 
For each of the three non-CSR models, $\mathrm{APF}_k$ is outside the extreme rank envelope, in particular when $k=0$ and  
both the meanage and lifetime are small, cf.\ the middle panel. This means that 
small lifetimes are not noise but of particular importance, cf.\ the discussion  
in Section~\ref{s:1:background}. 
Using an obvious notation, for small $m$, we may expect that
$\mathrm{APF}^{\mathrm{DPP}}_0(m)<\mathrm{APF}^{\mathrm{CSR}}_0(m)<\mathrm{APF}^{\mathrm{MC}}_0(m)$ which is
in agreement with the middle panel. 
For large $m$, we may expect that
$\mathrm{APF}^{\mathrm{DPP}}_0(m)>\mathrm{APF}^{\mathrm{CSR}}_0(m)$ and $\mathrm{APF}^{\mathrm{CSR}}_0(m)>\mathrm{APF}^{\mathrm{MC}}_0(m)$, but only the last relation is detected by the extreme rank envelope in the left panel. 
Similarly, we may expect $\mathrm{APF}^{\mathrm{MC}}_1(m)> \mathrm{APF}^{\mathrm{CSR}}_1(m)$ for small $m$, whereas $\mathrm{APF}^{\mathrm{MC}}_1(m)< \mathrm{APF}^{\mathrm{CSR}}_1(m)$ 
for large $m$, and both cases 
are detected in the right panel. Note that 
for the Baddeley-Silverman cell process and $k=0,1$, $\mathrm{APF}^{\mathrm{BS}}_k$  has a rather similar behaviour as $\mathrm{APF}^{\mathrm{DPP}}_k$, 
 i.e.\ like a regular point process and probably because clustering is a rare phenomena.

\begin{figure}
\centering
\begin{tabular}{ccc} 
\includegraphics[scale=0.22]{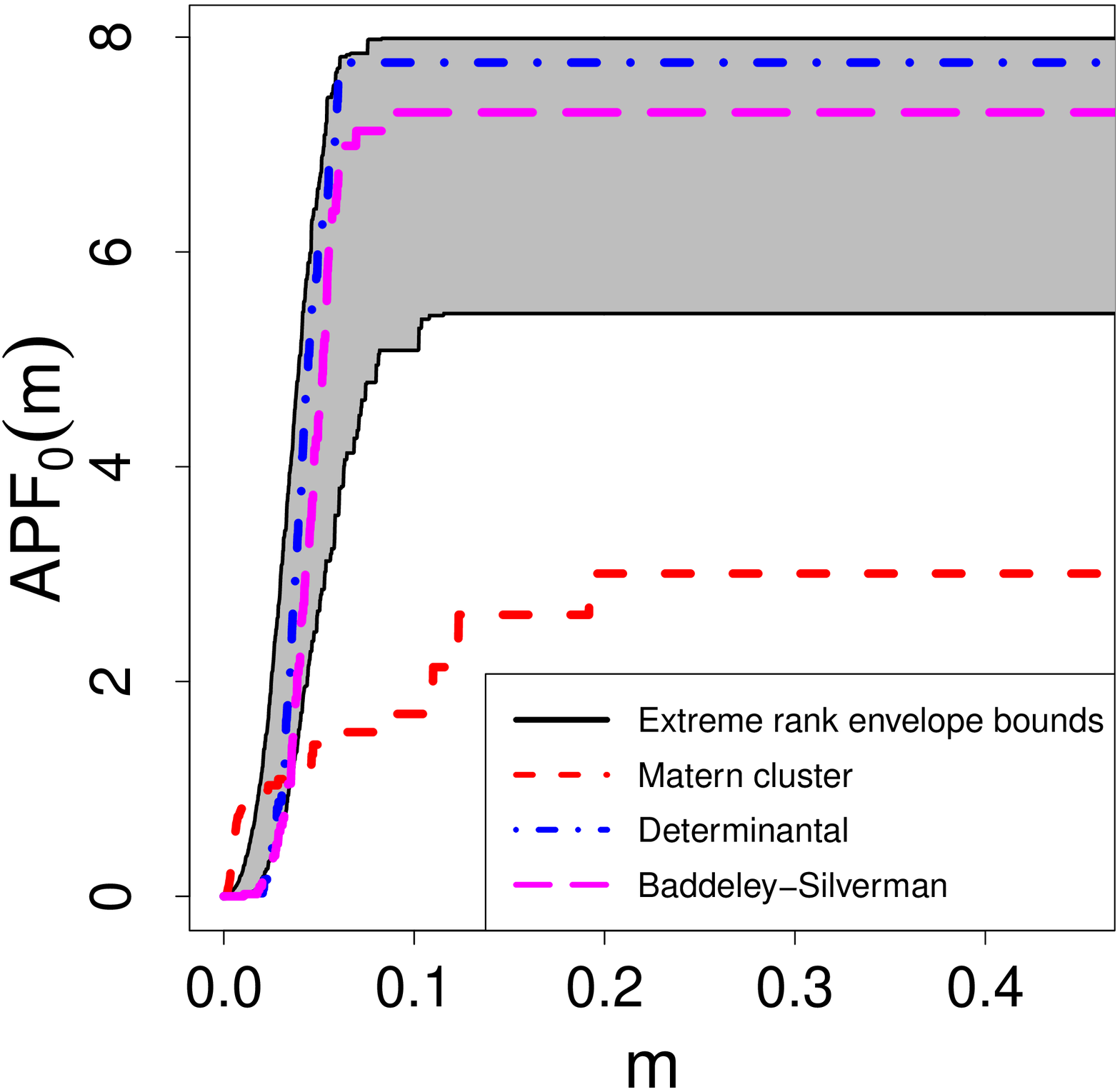}& \includegraphics[scale=0.22]{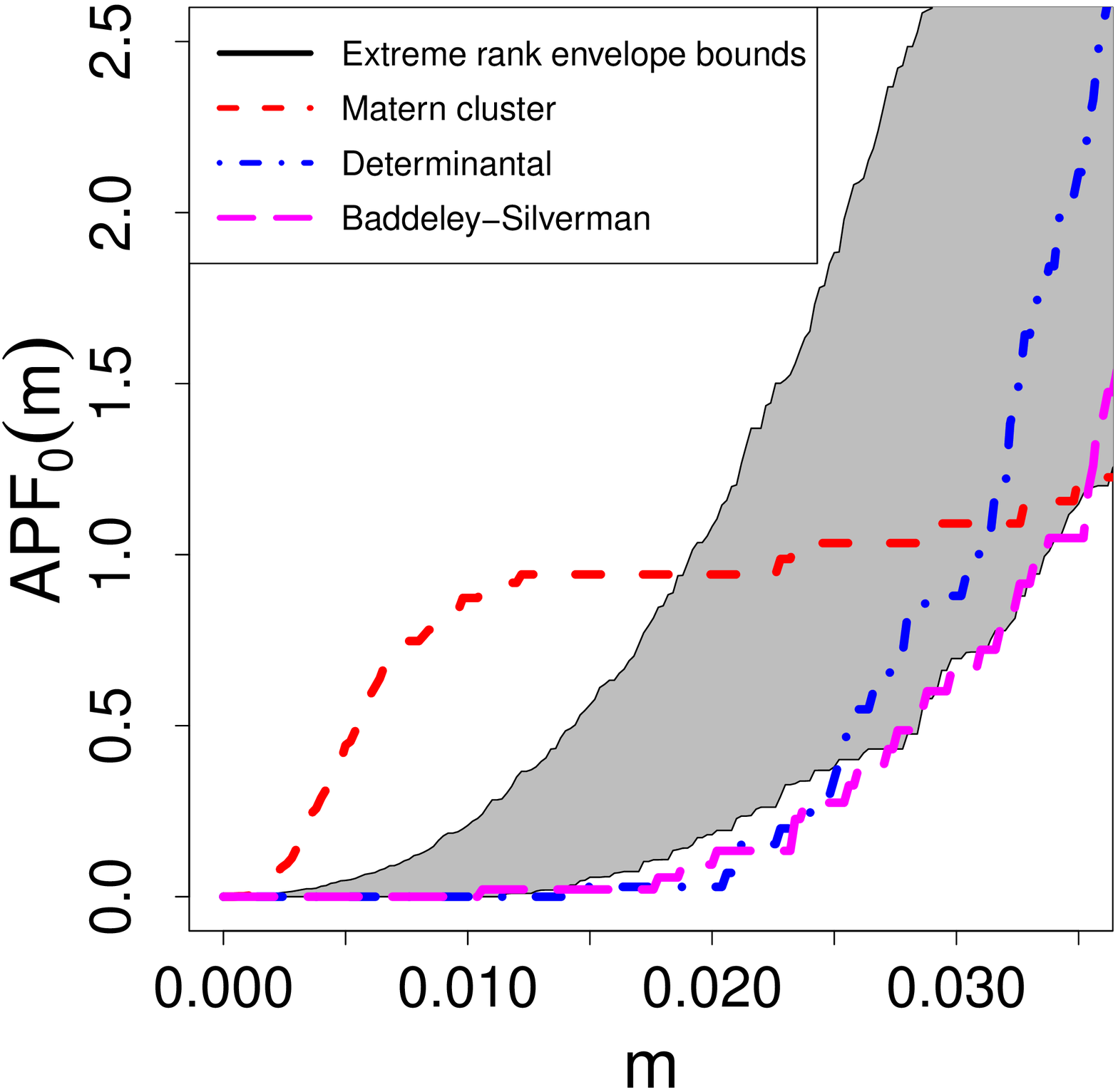}    & \includegraphics[scale=0.22]{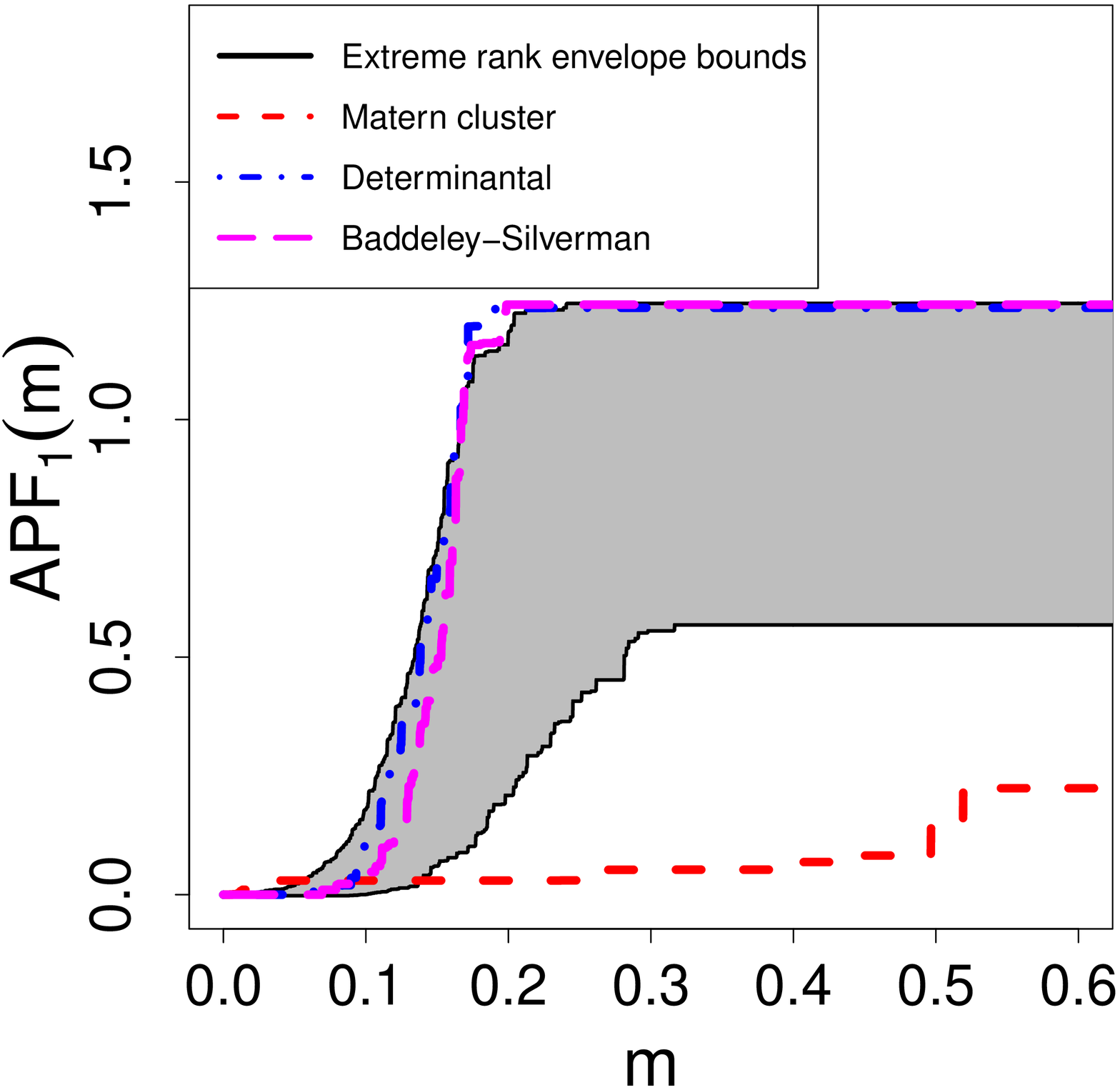} \end{tabular}
\caption{ $95\%$-extreme rank envelope for $\mathrm{APF}_i$ when $i=0$ (left panel and the enlargement shown in the middle panel) or $i=1$ (right panel) together with the curves for the three non-CSR models (Baddeley-Silverman cell process, Matérn cluster process, and Bessel-type DPP). The envelope is 
obtained from $2499$ realisations of a CSR model on the unit square and with intensity $100$.}\label{fig:graph:extreme_rank_test}
 \end{figure}       

A similar simulation study is discussed in~\cite{robins:turner:16} for the models in (a)-(c), but notice that they fix the number of points to be $100$ and 
they 
use a testing procedure based on the persistent homology rank function, which 
in contrast to our one-dimensional APF is a two-dimensional 
function and is not summarizing all the topological features represented in a persistent diagram.
\cite{robins:turner:16} show that a test for CSR based on the  persistent homology rank function is  
useful as compared to  various tests implemented in {\tt spatstat} 
and which only concern first and second-order moment properties.
Their method is in particular useful, when the true model is a  Baddeley-Silverman cell process 
with the same first and second-order moment properties as under CSR. 
Comparing Figure~4 in~\cite{robins:turner:16} with the results in Table~\ref{table:reject_envelope_intensite}  when the true model is a Baddeley-Silverman cell process and $\rho=100$, 
the extreme rank envelope test seems less powerful than the  
test they suggest.
On the other hand,~\cite{robins:turner:16} observe that
the latter test performs poorly when the true model is  a Strauss process (a model for inhibition) or a Matérn cluster process; 
as noticed for the Matérn cluster process, we obtain a perfect power when using the extreme rang envelope test.

\section{A single sample of accumulated persistence functions}\label{s:one sample}

\subsection{Functional boxplot} \label{s:2:functional boxplot}

This section discusses the use of a functional boxplot \citep{sun:genton:11} for a sample $A_1,\ldots,A_r$ of $\mathrm{APF}_k$s those joint distribution is exchangeable. The plot 
provides a representation of the variation of the curves given by $A_1,\ldots,A_r$ around the most central curve, and it can be used for outlier detection, i.e.\ detection of curves that are too extreme with respect to the others in the sample. 
This is illustrated in Example~2 for the brain artery trees dataset and in Appendix~\ref{s:appendix functional boxplot} and its accompanying Example~6 concerning a simulation study.

The functional boxplot is based on an 
ordering of the $\mathrm{APF}_k$s obtained using a so-called depth function. For specificity we make the standard choice called the modified band depth function (MBD), cf.\ \cite{lopez:romo:09} and \cite{sun:genton:11}: 
 For a user-specified parameter $T>0$ and $h,i,j=1,\ldots,r$ with $i<j$,  define 
 \[B_{h,i,j}= \left\lbrace m \in [0,T]: \ \min \left\lbrace A_i\left(  m  
\right), A_j\left(  m   \right) \right\rbrace    \leq  A_h(m) \leq \max 
\left\lbrace A_i\left(  m  \right), A_j\left(  m   \right) \right\rbrace    
\right\rbrace, \]
 and denote the Lebesgue measure on $[0,T]$ by   $\left| \cdot \right|$. Then the MBD of 
$A_h$ with respect to $A_1,\ldots,A_r$ is 
\begin{align}\label{e:definition MBD}
 \MBD_r(A_h )  = \frac{2}{r(r-1)} \sum_{ 1 \leq i < j \leq r }  \left|  B_{h,i,j}  \right| . 
\end{align}
This is the average proportion of   $A_h$ on $[0,T]$  between all possible pairs of  $A_1,\ldots,A_r$.  
Thus, the larger the value of  the MBD of a curve is, the more central or deeper it is in the sample. We call the region delimited by the $50\%$ 
 most central curves the central envelope. It is often assumed that a curve outside the central envelope inflated by $1.5$ times 
  the range of the central envelope is an outlier or abnormal curve ---  
  this is just a generalisation of a similar criterion for the boxplot of a sample of real numbers --- 
  and the range may be changed if it is more suitable for the application at hand,  see the discussion in~\cite{sun:genton:11} and Example~2 below.

\paragraph*{Example 2 (brain artery trees).} 

For the brain artery trees dataset (Section~\ref{s:brains}), Figure~\ref{fig:functional boxplot brain} shows
the functional boxplots of $\text{APF}_k$s for females (first and third panels) respective males (second and fourth panels) when $k=0$ (first and second panels) and $k=1$ (third and fourth panels): The most central curve is plotted in black, the central envelope in purple, and the upper and lower bounds obtained from all the curves except the outliers in dark blue. 
Comparing the two left panels (concerned with connected components), the shape of the central envelope is clearly different for females and males, in particular on the interval $[40,60]$, and
the upper and lower bounds of the non-outlier are closer to the central region for females, in particular on the interval $[0,50]$. For the two right panels (concerned with loops), the main difference 
is observed on the interval $[15,25]$ where the central envelope is larger for females than for males.

\begin{figure}
\centering
 \begin{tabular}{cccc}
\includegraphics[scale=0.17]{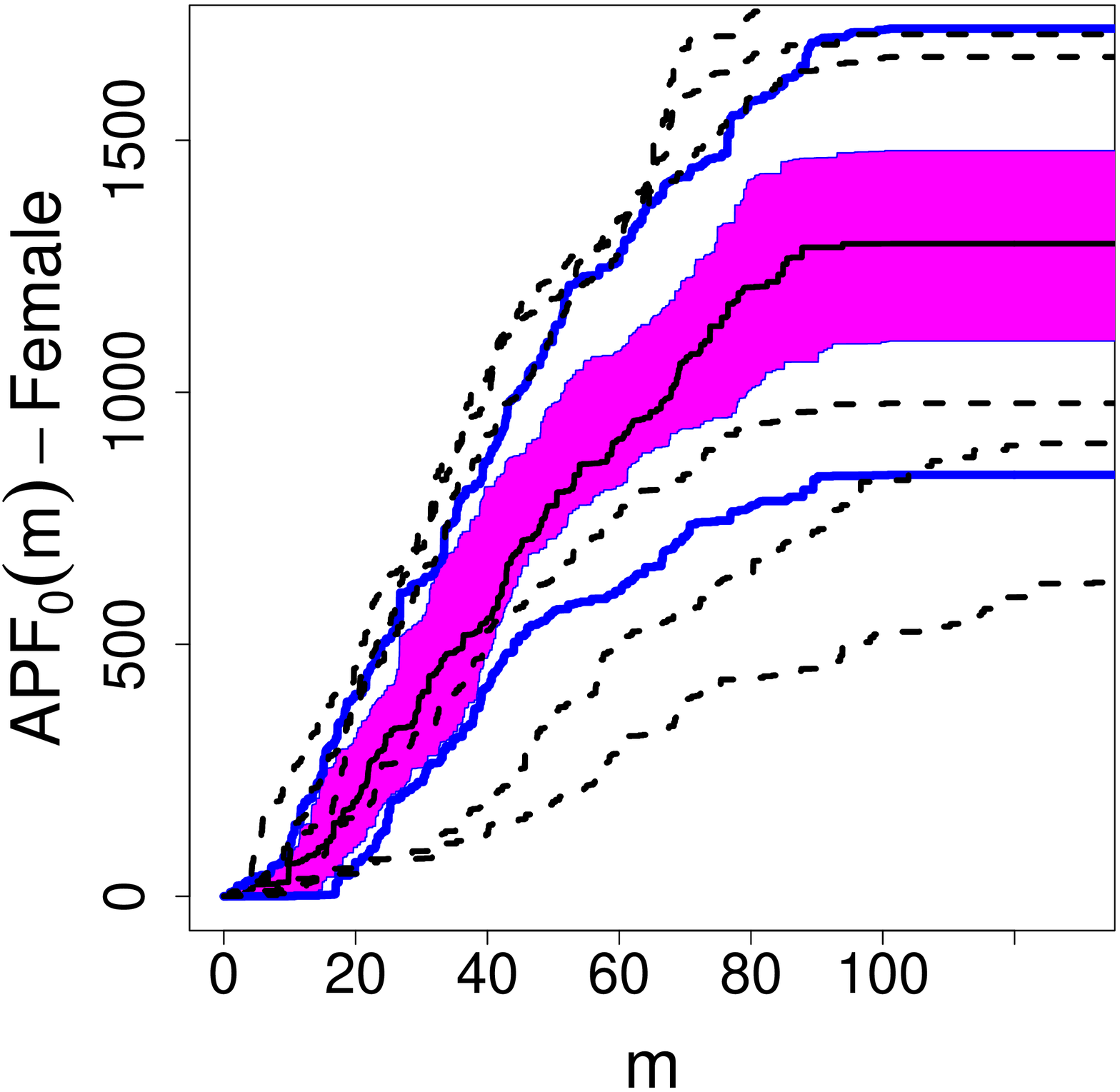} & \includegraphics[scale=0.17]{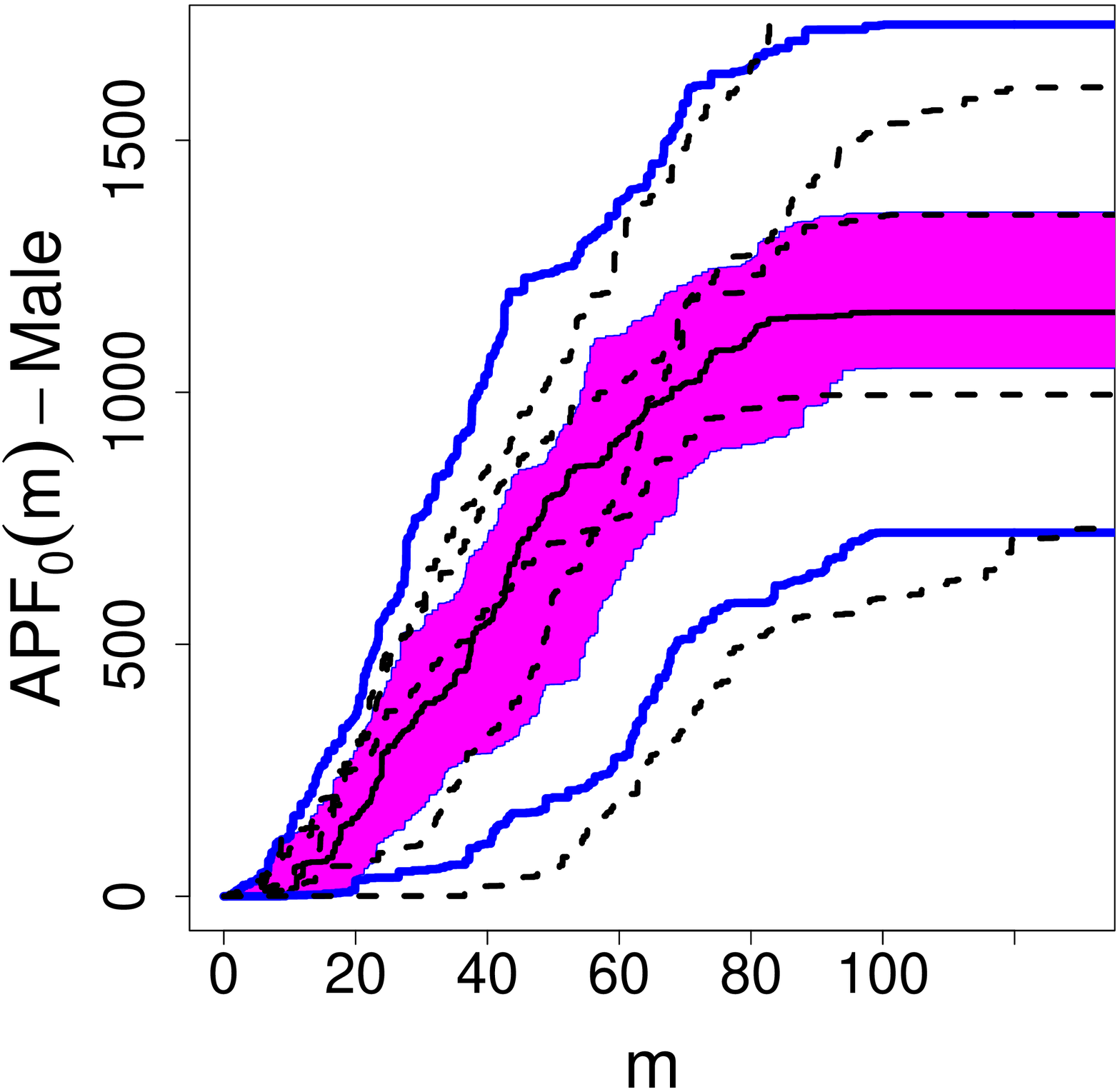}  
\includegraphics[scale=0.17]{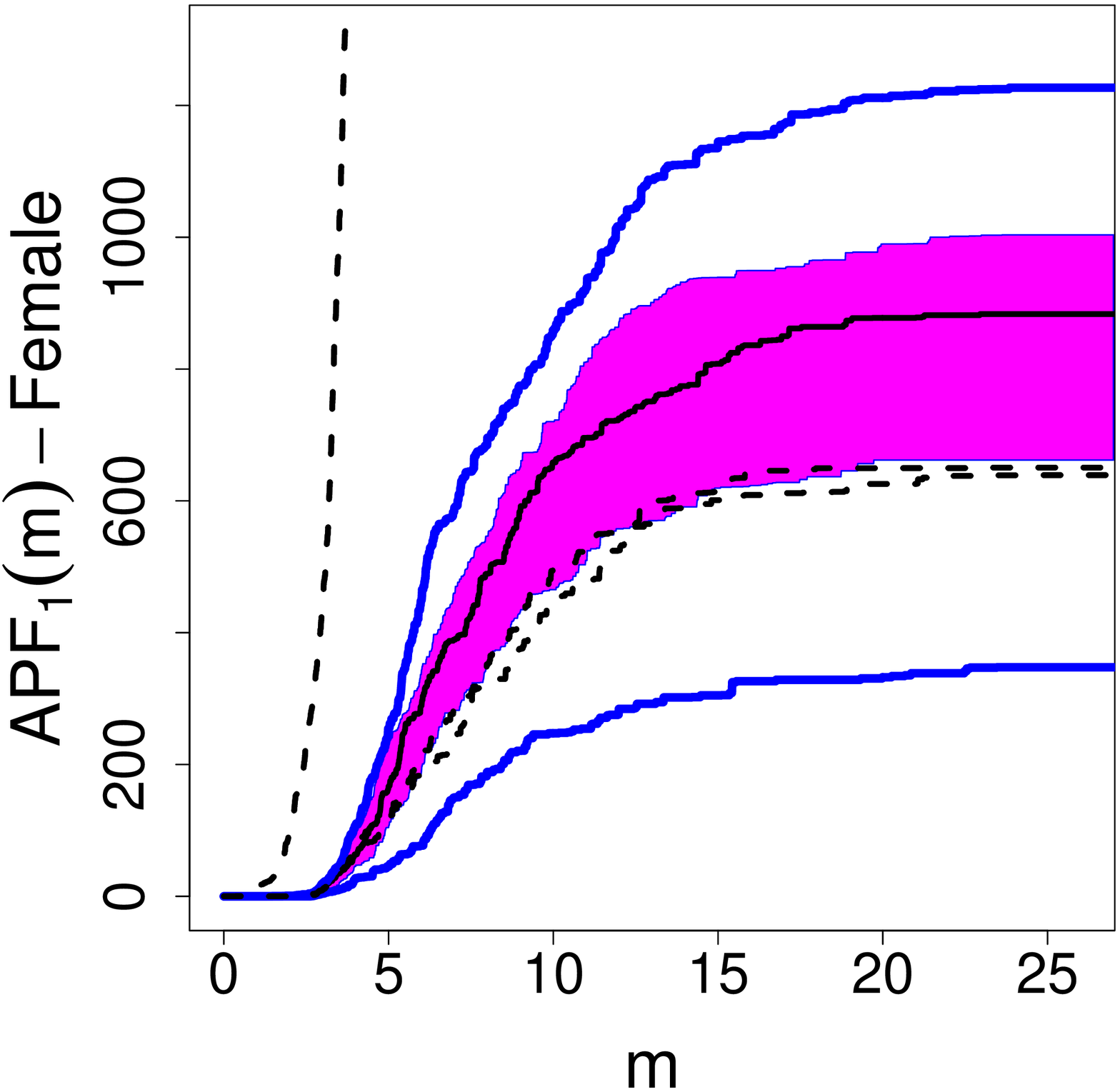} & \includegraphics[scale=0.17]{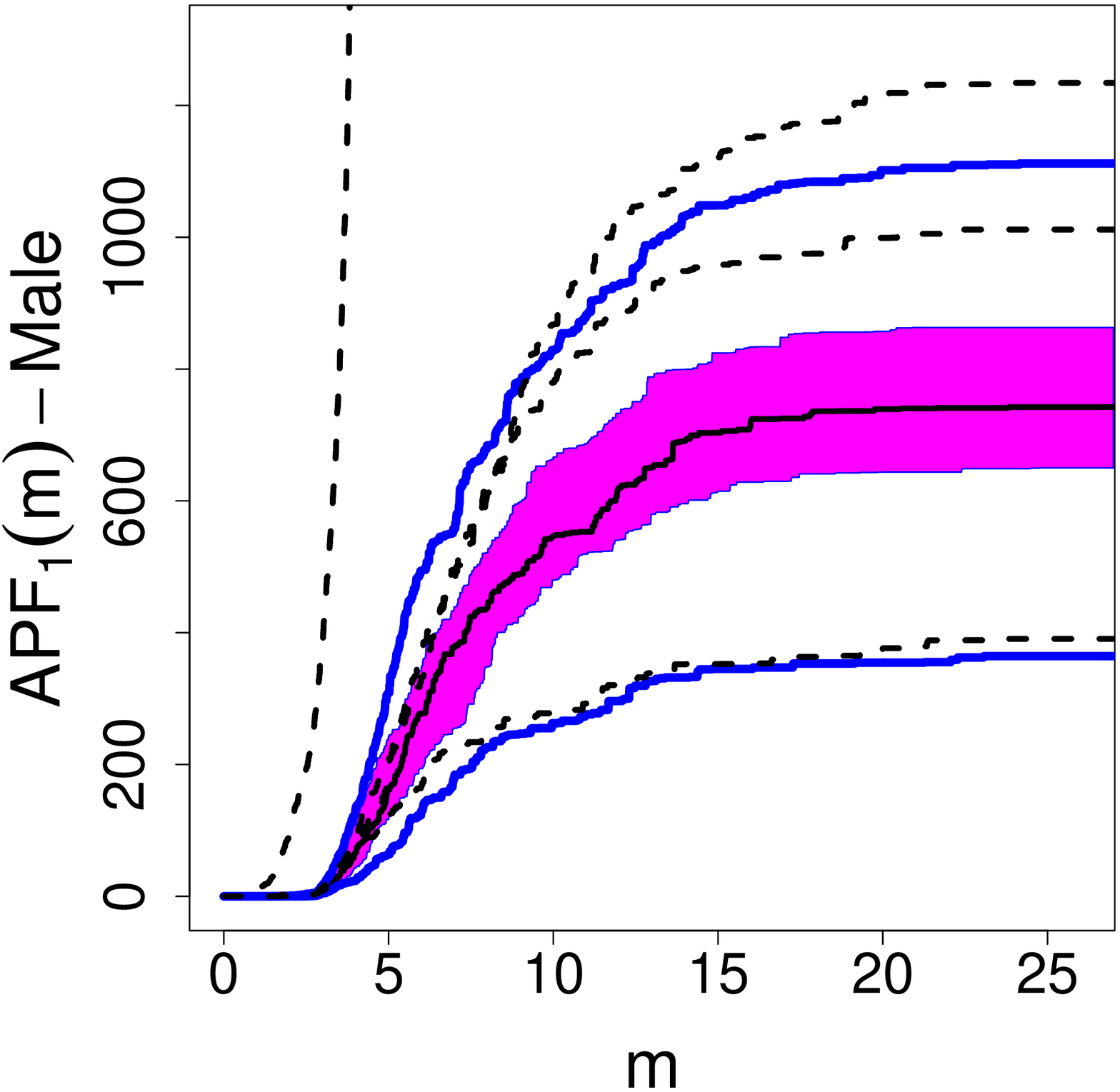}
 \end{tabular}
\caption{Functional boxplots of APFs for females and males obtained from the brain  
artery trees dataset: $\mathrm{APF}^F_0$ (first panel), $\mathrm{APF}^M_0$ (second panel), 
 $\mathrm{APF}^F_1$ (third panel), $\mathrm{APF}^M_1$ (fourth panel). 
 The dashed lines show the outliers detected by the 1.5 criterion.}\label{fig:functional boxplot brain}
 \end{figure}

The dashed lines in Figure~\ref{fig:functional boxplot brain} show the APFs detected as outliers by the 1.5 criterion, that is $6$ $\mathrm{APF}^F_0$s (first panel), $3$ $\mathrm{APF}^F_1$s (third panel), 
$6$ $\mathrm{APF}^M_0$s (second panel), and $4$ $\mathrm{APF}^M_1$s (fourth panel). 
For the females, only for one point pattern both $\mathrm{APF}^F_0$ and $\mathrm{APF}^F_1$ are outliers, where 
$\mathrm{APF}^F_1$ is the steep dashed line in the bottom-left panel; and
for the males, only for two point patterns
both $\mathrm{APF}^M_0$ and $\mathrm{APF}^M_1$ are outliers, where in one case
$\mathrm{APF}^M_1$ is the steep dashed line in the bottom-right panel. For this case, Figure~\ref{fig:functional boxplot brain - male outlier} reveals an obvious issue: A large part on the right 
of the corresponding tree
is missing!
 
Examples 3 and 4 discuss to what extent our analysis of the brain artery trees will 
  be sensitive to whether we include or exclude the detected outliers.

\begin{figure}
\begin{center}
\vspace*{0.8cm}
\includegraphics[scale=0.23]{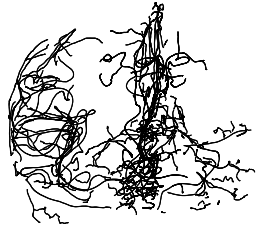}
\caption{Brain artery tree of a male subject with $\mathrm{APF}^M_0$ and $\mathrm{APF}^M_1$ 
detected as outliers by the 1.5 criterion.}\label{fig:functional boxplot brain - male outlier}
\end{center}
 \end{figure}

\subsection{Confidence region for the mean function}\label{s:confidence region mean}\label{s:crmean}

This section considers an asymptotic confidence region for the mean function 
of a sample $A_1,\ldots,A_r$ of IID $\mathrm{APF}_k$s. We assume that $D_1,\ldots,D_r$ are the underlying IID $\mathrm{RRPD}_k$s 
for the sample so that with probability one, there exists an upper bound $T<\infty$ on the death times
and there exists an upper bound  $n_{\mathrm{max}}<\infty$ on the 
number of $k$-dimensional topological features. Note that
the state space for such $\mathrm{RRPD}_k$s is 
\begin{align*}
\mathcal{D}_{k,T,n_{\mathrm{max}}}=\{\lbrace (m_1,l_1,c_1),\ldots,(m_n,l_n,c_n) \rbrace: \sum_{i=1}^n c_i \leq n_{\mathrm{max}},\, 
m_i + l_i/2 \leq T,\, i=1,\ldots,n\}
\end{align*} 
and
 only the existence and not the actual values of 
$n_{\mathrm{max}}$ and $T$ play a role when applying our method below. 
For example, in the settings (i)-(ii) of 
Section~\ref{s:1:simulated dataset}
it suffices to assume that $\mathbf X$ is included in a bounded region of $\R^2$ and that the number of points $N$ is bounded by a constant; 
this follows from the two versions of the Nerve Theorem presented in~\cite{fasy:etal:14}
and~\cite{Edelsbrunner:Harer:10}, respectively.

We adapt an empirical bootstrap procedure (see e.g.\ \cite{Vaart:Wellner:96}) which in  
\cite{chazal:etal:13} is used for a confidence region for the mean of the dominant function
of the persistent landscape and which in our case works as follows.  
 For $0\le m\le T$, the mean function is given by 
$\mu(m)=\mathrm E\left\{A_1(m)\right\}$ and estimated by the empirical 
mean function $\overline{A}_r(m) =\frac{1}{r}  \sum_{i=1}^r  A_{i}(m)$. 
Let $A^*_1,\ldots,A^*_r$ be independent uniform draws with replacement from the set $\{A_1,\ldots, A_r\}$
and set $\overline{A^*_r}=\frac{1}{r}  \sum_{i=1}^r  A^*_{i}$
and $\theta^*=\sup_{m\in[0,T]} \sqrt{r} \left|\overline{A_r}(m)- \overline{A^*_r}(m)\right|$.  
For a given integer $B>0$, independently repeat this procedure $B$ times to obtain  $\theta_1^*,\ldots,\theta_B^*$. 
Then, for $0<\alpha<1$,  the $100(1-\alpha)\%$-quantile in the distribution of $\theta^*$ is estimated 
by 
 \begin{align*}
  \hat{q}^B_\alpha = \inf \lbrace q\geq 0: \, \frac{1}{B} \sum_{i=1}^B 1( \theta_i^*>q) \leq \alpha \rbrace.
 \end{align*}
 The following theorem is verified in Appendix~\ref{s:proof of theorem confidence region}. 
 
\vspace{0.5cm}

\begin{theorem}\label{t:1}
Let the situation be as described above.
For large values of $r$ and $B$,  the functions $\overline{A_r} \pm \hat{q}^B_\alpha/\sqrt r$
provide the bounds for an asymptotic conservative $100(1-\alpha)\%$-confidence region for the mean APF, that is
\begin{equation*}
 \lim_{r \ra \infty} \lim_{B\ra \infty} \mathrm P\left( 
     \mu(m) \in [\overline{A_r}(m) - \hat{q}_\alpha^B / \sqrt{r}, \overline{A_r}(m) + \hat{q}_\alpha^B / \sqrt{r}] \mbox{ for all } m \in [0,T]  \right)  \geq 1-\alpha.
\end{equation*}
\end{theorem}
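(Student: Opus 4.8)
The plan is to reduce the statement to a functional central limit theorem together with the validity of the empirical bootstrap, both applied to the random functions $A_i \in C[0,T]$ (or a suitable metric space of functions). First I would check that $A_i = \mathrm{APF}_k$ viewed as a random element is well-behaved: because the underlying $\mathrm{RRPD}_k$ lives in the compact state space $\mathcal{D}_{k,T,n_{\mathrm{max}}}$, each $A_i$ is a nondecreasing step function on $[0,T]$ with at most $n_{\mathrm{max}}$ jumps, each jump bounded by $l_i \le 2T$, so $\|A_i\|_\infty \le 2T n_{\mathrm{max}}$ almost surely. In particular $A_1$ is a bounded random element, so $\mu(m) = \mathrm{E}\{A_1(m)\}$ is finite for every $m$, and all moment conditions needed below hold trivially. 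I would work in a Banach space $\mathbb{F}$ of functions on $[0,T]$ containing the step functions and equipped with the sup-norm — e.g. the space $D[0,T]$ of càdlàg functions, or simply treat the $A_i$ as elements of $\ell^\infty([0,T])$ and invoke empirical-process theory as in \cite{Vaart:Wellner:96}.

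Next I would establish a functional CLT: $\sqrt{r}\,(\overline{A_r} - \mu) \rightsquigarrow G$ in $\ell^\infty([0,T])$, where $G$ is a tight mean-zero Gaussian process with covariance $\Sigma(s,t) = \mathrm{Cov}\{A_1(s), A_1(t)\}$. Since the $A_i$ are IID bounded random elements of $\ell^\infty([0,T])$, this follows once the class of coordinate projections $\{f \mapsto f(m) : m \in [0,T]\}$ is shown to be Donsker for the law of $A_1$; the uniform boundedness plus the monotone-in-$m$ structure of each $A_i$ (finitely many jumps, bounded variation uniformly in $\omega$) gives the required entropy bound, so the class is Donsker. Then by the continuous mapping theorem $\theta := \sup_{m\in[0,T]} \sqrt{r}\,|\overline{A_r}(m) - \mu(m)| \Rightarrow \|G\|_\infty$.

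The third step is the bootstrap. The empirical (multinomial) bootstrap is consistent for Donsker classes: conditionally on the data, $\sqrt{r}\,(\overline{A^*_r} - \overline{A_r}) \rightsquigarrow G$ in $\ell^\infty([0,T])$ in probability (Theorem 3.6.1 in \cite{Vaart:Wellner:96}), hence $\theta^* = \sup_m \sqrt{r}\,|\overline{A_r}(m) - \overline{A^*_r}(m)|$ converges conditionally in distribution to $\|G\|_\infty$. Taking $B \to \infty$ first, $\hat{q}^B_\alpha$ converges (for fixed $r$) to the conditional $(1-\alpha)$-quantile $\hat{q}_\alpha$ of $\theta^*$ given the data; then as $r\to\infty$ this quantile converges in probability to the $(1-\alpha)$-quantile $q_\alpha$ of $\|G\|_\infty$ — here I would note that $\|G\|_\infty$ has a continuous distribution function on $(0,\infty)$, or alternatively pass to the "$\ge$" inequality which is all the theorem claims, so continuity of the limiting quantile is not strictly needed. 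Combining, $\mathrm{P}(\theta \le \hat{q}^B_\alpha) \to \mathrm{P}(\|G\|_\infty \le q_\alpha) \ge 1-\alpha$ after the iterated limit, and $\{\theta \le \hat{q}^B_\alpha\}$ is exactly the event $\{\mu(m) \in [\overline{A_r}(m) \pm \hat{q}^B_\alpha/\sqrt{r}]\ \forall m \in [0,T]\}$, which is the assertion.

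I expect the main obstacle to be the rigorous justification of the Donsker property of the coordinate-projection class under the law of $A_1$ — i.e. verifying that the random function $A_1$ is "smooth enough" as a random element of $\ell^\infty([0,T])$ for the functional CLT and the bootstrap CLT to apply — together with the careful handling of the \emph{iterated} limit $\lim_{r\to\infty}\lim_{B\to\infty}$ and the measurability/outer-probability technicalities that accompany empirical-process arguments in non-separable spaces. The uniform almost-sure bounds $\sum c_i \le n_{\mathrm{max}}$ and $m_i + l_i/2 \le T$ coming from compactness of $\mathcal{D}_{k,T,n_{\mathrm{max}}}$ are exactly what make this tractable: they reduce everything to bounded, uniformly-bounded-variation random functions, for which the needed entropy bounds are standard. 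The remaining steps (continuous mapping, reading off the confidence-region event from the definition of $\theta$ and $\hat q^B_\alpha$) are routine.
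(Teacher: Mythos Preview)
Your proposal is correct and follows essentially the same route as the paper: establish that the class $\{f_m(D)=A_D(m):m\in[0,T]\}$ is Donsker (the paper does this via an explicit bracketing-entropy computation exploiting the pointwise monotonicity $f_{m_1}\le f_{m_2}$ for $m_1\le m_2$, which is exactly the ``monotone-in-$m$ structure'' you invoke), then deduce bootstrap consistency from a standard result (the paper cites Gin\'e's Theorem~2.4 rather than van der Vaart--Wellner~3.6.1, but these play the same role) and conclude by the continuous mapping theorem. The only substantive difference is that the paper spells out the bracketing bound via an auxiliary partition lemma, whereas you defer this to ``standard'' entropy results for linearly ordered, uniformly bounded classes---both are valid.
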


 \paragraph*{Example 3 (brain artery trees).} 

 The brain artery trees are all contained in a bounded region 
 and presented by a bounded number of points, 
 so it is obvious that $T$ and $n_{\mathrm{max}}$ exist for $k=0,1$. 
To establish confidence regions for the mean of the
$\mathrm{APF}^M_k$s respective $\mathrm{APF}^F_k$s, we  apply the bootstrap procedure with $B=1000$.
The result is shown in Figure~\ref{fig:confidence region brain} when all 95 trees 
are considered: 
In the left panel, $k=0$ and approximatively half of each confidence region overlap with the other confidence region; it is not clear if there is a difference between genders. In the right panel, $k=1$ and the difference is more pronounced, in particular on the interval $[15,25]$. 
Similar results and conclusions are obtained 
if we exclude the APFs detected as outliers in Example~2. 
Of course we should supply with a statistical test to assess the gender effect and such a test is established in Section~\ref{s:two sample problem} and applied in Example~4.

Appendix~\ref{s:appendix crmean}  provides the additional Example~7 for a simulated dataset along with a discussion on the geometrical interpretation of the confidence region obtained.

 \begin{figure}
\centering 
\begin{tabular}{cc}
\includegraphics[scale=0.2]{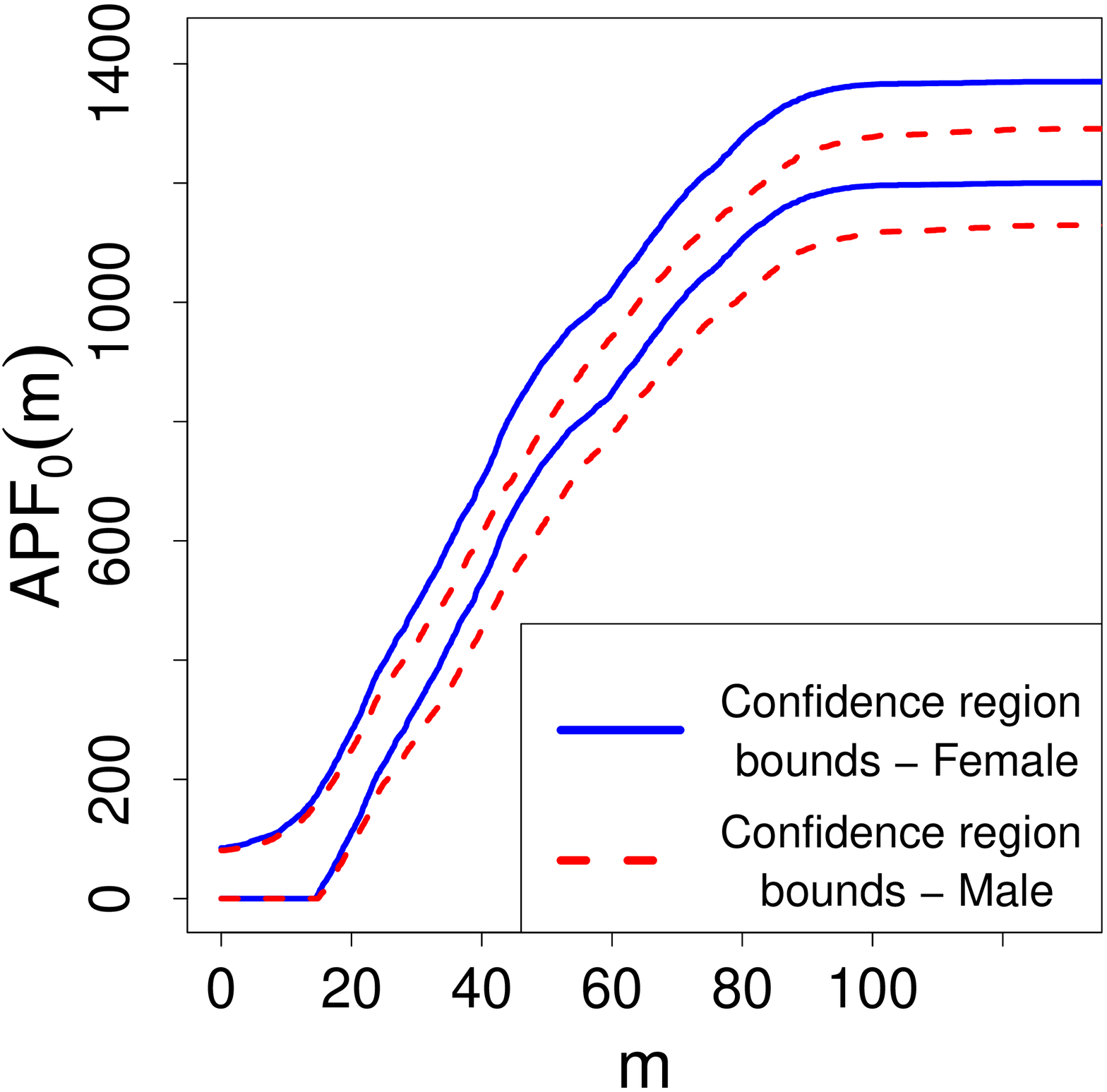} & \includegraphics[scale=0.2]{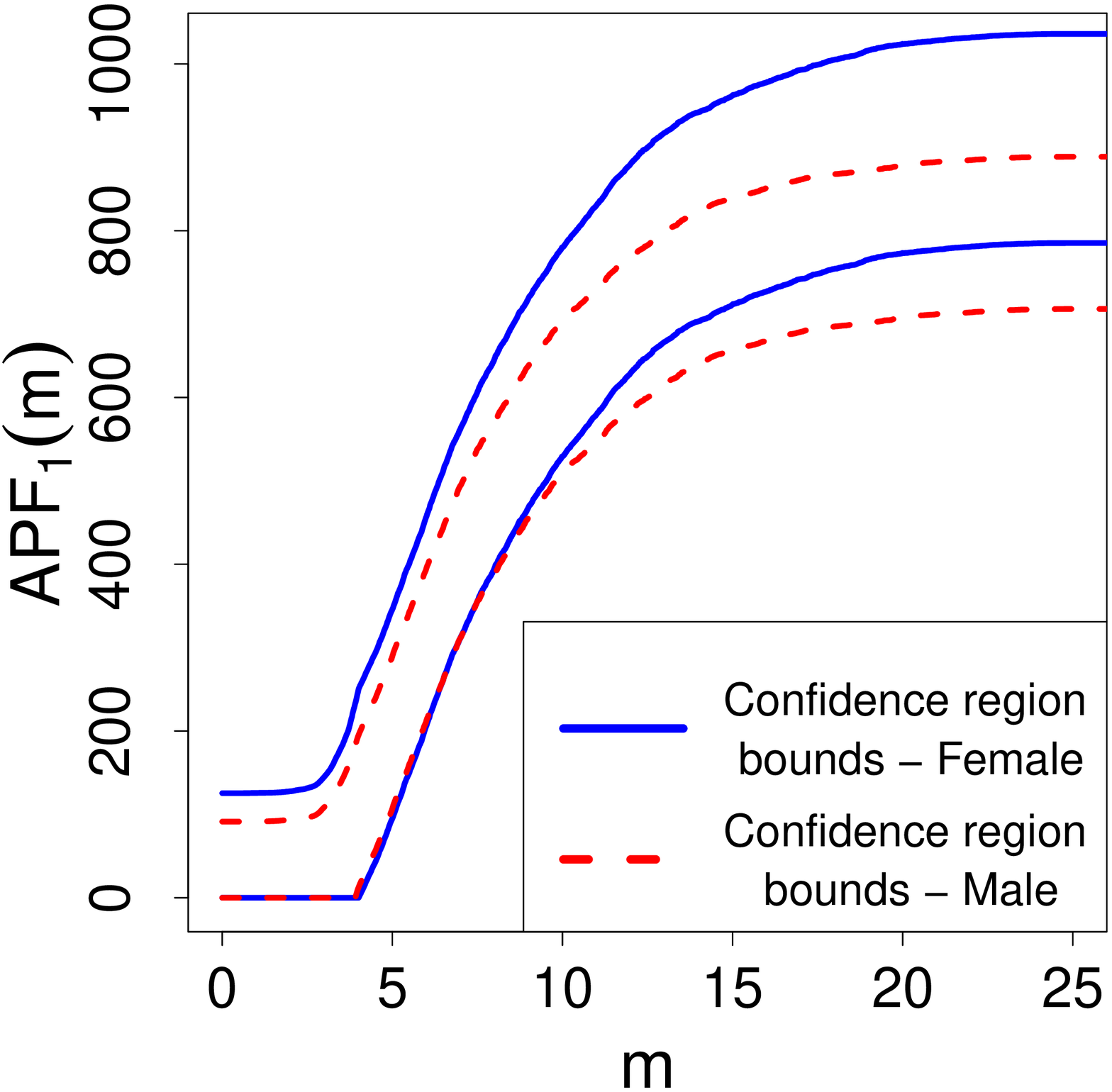}
\end{tabular}
\caption{
Bootstrap confidence regions for the mean $\mathrm{APF}^M_k$ and the mean $\mathrm{APF}^F_k$
when $k=0$ (left panel) and $k=1$ (right panel).}\label{fig:confidence region brain}
 \end{figure}

\section{Two samples of  accumulated persistence functions}\label{s:two sample problem}
 
This section concerns a two-sample test for comparison of two samples of APFs. 
Appendix~\ref{s:group of APFs} presents both a clustering method (Appendix~\ref{s:clusteringappendix}, including Example~9) and a unsupervised classification method (Appendix~\ref{s:supervised classification}, including Example~10) for two or more samples.

Consider two samples of independent $\mathrm{RRPD}_k$s $D_1,\ldots,D_{r_1}$ and $E_1,\ldots,E_{r_2}$, 
where each $D_i$ ($i=1,\ldots,r_1$) has distribution $\mathrm{P}_D$ 
and each $E_j$ has distribution $\mathrm{P}_E$ ($j=1,\ldots,r_2$), and suppose we  
want to test the null hypothesis $\mathcal{H}_0$: $\mathrm{P}_D = \mathrm{P}_E=\mathrm{P}$. 
Here, the common distribution $\mathrm P$ is unknown and as in Section~\ref{s:confidence region mean} we assume it is concentrated 
on $\mathcal{D}_{k,T,n_{\mathrm{max}}}$ for some integer $n_{\mathrm{max}}>0$ and number $T>0$. 
Below, we adapt a two-sample test statistic studied in~\cite{praestgaard:95} and \cite{Vaart:Wellner:96}.

Let $r = r_1+r_2$. Let $A_1,\ldots,A_r$ be the $\mathrm{APF}_k$ corresponding to $( D_1, \ldots,D_{r_1},E_1,\ldots,E_{r_2} )$, and denote by $\overline{ A_{r_1}}$ and $\overline{ A_{r_2}}$ 
 the empirical means of $A_1,\ldots, A_{r_1}$ and $A_{r_1+1},\ldots, A_{r_1+r_2}$, respectively.
Let $I=[T_1,T_2]$ be a user-specified interval with $0\le T_1<T_2\le T$ and used for
defining a two-sample test statistic by
\begin{align}\label{e:KSfirst} 
  KS_{r_1,r_2} &= \sqrt{\frac{r_1 r_2}{r}} \sup_{ m \in I} \left|\overline{ A_{r_1}} (m) - \overline{ A_{r_2}} (m) \right|,
\end{align}
where large values are critical for $\mathcal{H}_0$.
This may be rewritten as 
\begin{align} \label{e:stat KS alternative}
  KS_{r_1,r_2} = \sup_{ m \in I} \left|\sqrt{\frac{r_2}{r}} G^{r_1}_D(m) - \sqrt{\frac{r_1}{r}}G^{r_2}_E(m) + \sqrt{\frac{r_1 r_2}{r}} \mathrm E \left\{ A_D-A_E \right\}(m) \right|,
\end{align}
where  $G_D^{r_1} = \sqrt{r_1} \left( \overline{ A_{r_1}} - \mathrm E\left\{A_D\right\} \right)$ 
and $G_E^{r_2} = \sqrt{r_2} \left( \overline{ A_{r_2}} - \mathrm E\left\{A_E\right\}\right)$.  
By Lemma~\ref{lemma APF donsker} 
in Appendix~\ref{s:proof of theorem confidence region} and by the independence of the samples, $G_D^{r_1}$ and $G_E^{r_2}$ 
converge in distribution   to two independent zero-mean Gaussian processes on $ I$, denoted $G_D$ and $G_E$,  respectively.   
Assume that $ r_1/r \ra \lambda\in (0,1)$ as $r \ra \infty$. 
 Under 
$\mathcal{H}_0$, in the sense of convergence in distribution,
\begin{align}\label{e:stat KS convergence}
\lim_{r \ra \infty}  KS_{r_1,r_2} = \sup_{ m \in I} \left|\sqrt{1-\lambda} G_D(m) - \sqrt{\lambda}G_E(m) \right|,
\end{align}
where $\sqrt{1-\lambda} G_D - \sqrt{\lambda}G_E $ follows the same distribution as $G_D$. 
If $\mathcal{H}_0$ is not true and $\sup_{ m \in I} \left| \mathrm E \left\{ A_1-A_{r_1+1}\right\}(m)\right|  >0$, then 
$KS_{r_1,r_2}\ra\infty$ as $r \ra \infty$, see~\cite{Vaart:Wellner:96}. 
Therefore, for $0<\alpha<1$ and letting $q_\alpha = \inf \lbrace q: \, \mathrm P( \sup_{ m \in I} \left| G_D(m) \right| > q ) \leq \alpha \rbrace$,
the asymptotic test that rejects $\mathcal{H}_0$ if $KS_{r_1,r_2} \le q_\alpha$
is of level $100\alpha\%$  and of power $100\%$. 

As $q_\alpha $ depends on the unknown distribution $\mathrm P$, we
estimate $q_\alpha $ by  a bootstrap method:  
Let  $A^*_1,\ldots, A^*_r$  be independent uniform draws with replacement from $\lbrace A_1,\ldots, A_r \rbrace$.   
For $0\le m\le T$, define the empirical mean functions 
$ \overline{  A^*_{r_1}} (m) = \frac{1}{r_1} \sum_{i=1}^{r_1} A^*_{i} (m)$ and  
$ \overline{  A^*_{r_2}} (m) = \frac{1}{r_2} \sum_{i=r_1+1}^{r_1+r_2} A^*_{i} (m)$,  
and compute 
\begin{align}
 \theta^*  &= \sqrt{\frac{r_1 r_2}{r}} \sup_{ m \in I} \left|\overline{ A^*_{r_1}} (m) - \overline{ A^*_{r_2}} (m) \right|.  \label{critical value KS} 
\end{align}
For a given integer $B>0$, independently repeat this procedure $B$ times to obtain $\theta_1^*,\ldots,\theta_B^*$.  
Then we  estimate $q_\alpha $ by the $100(1-\alpha)\%$-quantile of the empirical distribution of $\theta_1^*,\ldots,\theta_B^*$, that is
\begin{align*}
  \hat{q}^B_\alpha = \inf \lbrace q\geq 0: \, \frac{1}{B} \sum_{i=1}^B 1( \theta_i^*>q) \leq \alpha \rbrace.
 \end{align*}

The next theorem is a direct application of Theorem 3.7.7 in~\cite{Vaart:Wellner:96}  noticing that the $\mathrm{APF}_k$s are uniformly bounded by $Tn_{\mathrm{max}}$ and they
form a so-called Donsker class, see Lemma~\ref{lemma APF donsker} 
and its proof in Appendix~\ref{s:proof of theorem confidence region}.

\vspace{0.5cm}

\begin{theorem}\label{th convergence KS}
 Let the situation be as described above. 
If $r \rightarrow\infty$ such that $ r_1/r \ra \lambda$ with $\lambda\in (0,1)$, 
then under $\mathcal{H}_0$
\begin{align*}
 \lim_{r \rightarrow \infty} \lim_{B\ra \infty} \mathrm P\left( KS_{r_1,r_2}> {\hat{q}^B_\alpha}  \right) = \alpha,
\end{align*}
whilst if $\mathcal{H}_0$ is not true and  $\sup_{ m \in I} \left| \mathrm E \left\{ A_{1}-A_{r_1+1}\right\}(m)\right|  >0$, then
\begin{align*}
 \lim_{r \rightarrow \infty} \lim_{B\ra \infty} \mathrm P\left( KS_{r_1,r_2}> {\hat{q}^B_\alpha}  \right) = 1.
\end{align*}
\end{theorem}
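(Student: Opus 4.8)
The plan is to deduce the result, following the route indicated just before the theorem, from the functional central limit theorem for the APFs together with the bootstrap consistency theorem of \cite{Vaart:Wellner:96} (their Theorem~3.7.7). Write $f_m$ for the functional sending an element $\lbrace(m_i,l_i,c_i)\rbrace$ of $\mathcal{D}_{k,T,n_{\mathrm{max}}}$ to $\sum_i c_i l_i\,1(m_i\le m)$, so that each APF is the trajectory $m\mapsto f_m(\cdot)$; by Lemma~\ref{lemma APF donsker} the APFs are uniformly bounded by $Tn_{\mathrm{max}}$ and form a Donsker class (for the relevant distributions). First I would record the consequences. By the independence of the two samples, $G_D^{r_1}$ and $G_E^{r_2}$ converge in distribution in $\ell^\infty(I)$ to independent zero-mean Gaussian processes $G_D$ and $G_E$; since $h\mapsto\sup_{m\in I}|h(m)|$ is continuous on $\ell^\infty(I)$, the continuous mapping theorem applied to \eqref{e:stat KS alternative}, together with $r_1/r\to\lambda$ and Slutsky's lemma, gives under $\mathcal{H}_0$ (where $\mathrm E\lbrace A_D-A_E\rbrace\equiv0$) that $KS_{r_1,r_2}$ converges in distribution to the law $\mathcal{L}$ of $\sup_{m\in I}|\sqrt{1-\lambda}\,G_D(m)-\sqrt{\lambda}\,G_E(m)|$; as already noted around \eqref{e:stat KS convergence} this equals the law of $\sup_{m\in I}|G_D(m)|$, and I write $q_\alpha$ for its $100(1-\alpha)\%$-quantile.

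Next I would handle the bootstrap quantile $\hat q^B_\alpha$. Writing $\overline{A^*_{r_1}}-\overline{A^*_{r_2}}=(\overline{A^*_{r_1}}-\overline{A_r})-(\overline{A^*_{r_2}}-\overline{A_r})$ makes the common pooled mean $\overline{A_r}$ cancel, so that $\theta^*$ in \eqref{critical value KS} equals $\sup_{m\in I}\bigl|\sqrt{r_2/r}\,\sqrt{r_1}(\overline{A^*_{r_1}}-\overline{A_r})(m)-\sqrt{r_1/r}\,\sqrt{r_2}(\overline{A^*_{r_2}}-\overline{A_r})(m)\bigr|$; conditionally on the data, the processes $\sqrt{r_1}(\overline{A^*_{r_1}}-\overline{A_r})$ and $\sqrt{r_2}(\overline{A^*_{r_2}}-\overline{A_r})$ are independent (as are $A^*_1,\ldots,A^*_{r_1}$ and $A^*_{r_1+1},\ldots,A^*_r$), and by Theorem~3.7.7 of \cite{Vaart:Wellner:96} for the Donsker class each converges weakly in probability to the Gaussian limit of the pooled empirical process. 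Hence, under $\mathcal{H}_0$, the conditional law of $\theta^*$ converges weakly in probability to $\mathcal{L}$ (the pooled limit then being distributed as $G_D$); and, whether or not $\mathcal{H}_0$ holds, the conditional law of $\theta^*$ stays tight, so $\theta^*=O_{\mathrm P}(1)$ and $\hat q^B_\alpha=O_{\mathrm P}(1)$. The conditional law of $\theta^*$ given the data is atomic (finitely many resamples), so $\hat q^B_\alpha\to\inf\lbrace q:\mathrm P(\theta^*>q\mid A_1,\ldots,A_r)\le\alpha\rbrace$ almost surely as $B\to\infty$; and since the distribution function of $\mathcal{L}$ is continuous and strictly increasing near $q_\alpha$---a standard property of the supremum of a non-degenerate separable Gaussian process, the relevant non-degeneracy being that some $f_m$ is not $\mathrm P$-almost-surely constant---this exact bootstrap quantile converges in probability to $q_\alpha$ under $\mathcal{H}_0$ as $r\to\infty$.

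With these two paragraphs in hand the conclusions follow quickly. Under $\mathcal{H}_0$, $\hat q^B_\alpha$ converges in probability to the constant $q_\alpha$, so $(KS_{r_1,r_2},\hat q^B_\alpha)$ converges jointly in distribution to $(\Xi,q_\alpha)$ with $\Xi\sim\mathcal{L}$ by Slutsky, and continuity of $\mathcal{L}$ at $q_\alpha$ together with the portmanteau lemma yields $\mathrm P(KS_{r_1,r_2}>\hat q^B_\alpha)\to\mathcal{L}((q_\alpha,\infty))=\alpha$, the first display. For the power statement, put $\delta=\sup_{m\in I}|\mathrm E\lbrace A_1-A_{r_1+1}\rbrace(m)|=\sup_{m\in I}|\mathrm E\lbrace A_D-A_E\rbrace(m)|>0$; from \eqref{e:stat KS alternative} and $\sup_m|u(m)+g(m)|\ge\sup_m|g(m)|-\sup_m|u(m)|$, taken with $g(m)=\sqrt{r_1r_2/r}\,\mathrm E\lbrace A_D-A_E\rbrace(m)$ and $u$ the sum of the first two terms in \eqref{e:stat KS alternative},
\[
KS_{r_1,r_2}\;\ge\;\sqrt{\tfrac{r_1r_2}{r}}\,\delta\;-\;\sup_{m\in I}\Bigl|\sqrt{\tfrac{r_2}{r}}\,G^{r_1}_D(m)\Bigr|\;-\;\sup_{m\in I}\Bigl|\sqrt{\tfrac{r_1}{r}}\,G^{r_2}_E(m)\Bigr|,
\]
where $\sqrt{r_1r_2/r}\to\infty$ while the two subtracted suprema are $O_{\mathrm P}(1)$ by the first paragraph; hence $KS_{r_1,r_2}\to\infty$ in probability, whereas $\hat q^B_\alpha=O_{\mathrm P}(1)$ by the second paragraph, so $\mathrm P(KS_{r_1,r_2}>\hat q^B_\alpha)\to1$.

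The step I expect to be the main obstacle is the bootstrap one: one must check that the hypotheses of Theorem~3.7.7 in \cite{Vaart:Wellner:96} are genuinely met---the Donsker property with square-integrable envelope is immediate here since $\|f_m\|_\infty\le Tn_{\mathrm{max}}$ and by Lemma~\ref{lemma APF donsker}---and, more delicately, that its conclusion transfers cleanly to the two-sample functional $\theta^*$, in particular that under $\mathcal{H}_0$ the conditional limit of $\theta^*$ is exactly $\mathcal{L}$ and that in general $\theta^*$ remains tight even when the pooled sample is merely independent rather than identically distributed. The remaining technical point---continuity and strict monotonicity of $\mathcal{L}$ near $q_\alpha$, so that the empirical bootstrap quantiles converge to $q_\alpha$---is routine but worth stating explicitly.
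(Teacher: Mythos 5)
Your argument is correct and follows essentially the same route as the paper: the paper's proof is precisely a direct appeal to Theorem~3.7.7 of \cite{Vaart:Wellner:96}, justified by the uniform bound $Tn_{\mathrm{max}}$ and the Donsker property of Lemma~\ref{lemma APF donsker}, combined with the convergence \eqref{e:stat KS convergence} under $\mathcal{H}_0$ and the divergence of $KS_{r_1,r_2}$ under the alternative. What you have done is unpack the content of that cited theorem (conditional weak convergence of the bootstrap processes, quantile convergence, Slutsky/portmanteau, and the lower bound giving power one), which is a faithful, more self-contained rendering of the same proof rather than a different one.
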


Therefore,  the test that rejects $\mathcal{H}_0$ 
if $KS_{r_1,r_2}  > \hat{q}^B_\alpha  $ is of asymptotic level $100\alpha\%$ and power $100\%$.
As remarked in~\cite{Vaart:Wellner:96}, by their Theorem~3.7.2 it is possible to present a permutation two-sample test so that the critical value $\hat q^B_\alpha$ for the bootstrap two-sample test has the same asymptotic properties as the critical value for the permutation test.

Other two-sample test statistics than \eqref{e:KSfirst} can be constructed by considering
other measurable functions of $\overline{ A_{r_1}}-\overline{ A_{r_2}}$,
e.g.\ we may consider the two-sample test statistic 
\begin{align}
 M_{r_1,r_2} &= \int_I \left| \overline{ A_{r_1}} (m) - \overline{ A_{r_2}} (m)  \right| \,\mathrm dm \label{critical value L1}.
\end{align}
Then by similar arguments as above but redefining $\theta^*$ in \eqref{critical value KS} by 
\begin{align*}
 \theta^*  &= \sqrt{\frac{r_1 r_2}{r}} \int_{ m \in I} \left|\overline{ A^*_{r_1}} (m) - \overline{ A^*_{r_2}} (m) \right|\,\mathrm dm,  
\end{align*} 
the test that rejects $\mathcal{H}_0$  
if $ M_{r_1,r_2}>\hat{q}^B_\alpha$ is of asymptotic level $100\alpha\%$ and power $100\%$.

\paragraph*{Example 4 (brain artery trees).}
 
To distinguish between male and female subjects of the brain artery trees dataset, we use the two-sample test statistic $KS_{r_1,r_2}$  
under three different settings: 
\begin{enumerate}
 \item[(A)] For $k=0,1$, we 
 let $\mathrm{PD}_k'$ be the subset of $\mathrm{PD}_k$ corresponding to the $100$ largest lifetimes.
 Then $D_1,\ldots,D_{46}$  and $E_1,\ldots,E_{49}$  are the $\mathrm{RRPD}_k$s  obtained from the
 $\mathrm{PD}_k'$s associated to female and male subjects, respectively. This is the setting used in \cite{steve:16}.
 \item[(B)] For $k=0,1$, we consider all lifetimes and let $D_1,\ldots,D_{46}$  and $E_1,\ldots,E_{49}$ be the  $\mathrm{RRPD}_k$s
  associated to female and male subjects, respectively.
 \item[(C)] The samples are as in setting (B) except that we exclude the $\mathrm{RRPD}_k$s
 where the corresponding $\mathrm{APF}_k$ was detected as an outlier in Example~2.
 Hence, $r_1=40$ and $r_2=43$ if $k=0$, and $r_1=43$ and $r_2=45$ if $k=1$.
\end{enumerate}

\cite{steve:16} perform a permutation test based on the mean lifetimes for the male and female subjects and
 conclude that gender effect is recognized when considering $\mathrm{PD}_1$ ($p$-value $=3\%$) but not $\mathrm{PD}_0$ ($p$-value $=10\%$). For comparison, 
under each setting (A)-(C), we perform the two-sample test for $k=0,1$, different intervals $I$, and $B=10000$. In each case, we estimate the $p$-value, 
i.e.\ the smallest $\alpha$ such that the two-sample test with significance level $100\alpha\%$ does not reject $\mathcal{H}_0$, 
by $\hat p=\frac{1}{B}\sum_{i=1}^B 1( \theta_i^* > KS_{r_1,r_2})$. 
Table~\ref{table:gender two sample test} shows the results. Under each setting (A)-(C), using $\mathrm{APF}_0$ we have a smaller $p$-value than in \cite{steve:16} if $I=[0,137]$ and an even larger $p$-value if $I=[0,60]$; and for $k=1$ under setting~(B), our $p$-value is about seven times larger than the $p$-value in \cite{steve:16} if $I=[0,25]$, and else it is similar or smaller. For $k=1$ and $I=[0,25]$, the large difference between our $p$-values under settings (B) and (C) indicates 
that the presence of outliers violates the result of Theorem~\ref{th convergence KS} and care should hence be taken. In our opinion we can better trust the results without outliers, where  in contrast to \cite{steve:16} we see a clear gender effect when considering the connected components. Notice also that in agreement with the discussion of Figure~\ref{fig:functional boxplot brain} in Example~2, for each setting A, B, and C and each dimension $k=0,1$, the $p$-values in Table~\ref{table:gender two sample test} are smallest when considering the smaller interval $I=[0,60]$ or $I=[15,25]$.  

Appendix~\ref{s:appendix two sample test} provides an additional Example~8 illustrating the use of two-sample test in a simulation study. 

  \begin{table}
\newcommand{\mc}[3]{\multicolumn{#1}{#2}{#3}}
\def\arraystretch{1}
\begin{center}
\begin{tabular}[c]{l|c c c c}
\mc{1}{c|}{}                          &  \mc{2}{c|}{$\mathrm{APF}_0$}       & \mc{2}{c|}{$\mathrm{APF}_1$}    \\ 
\mc{1}{c|}{}                          &  $I=[0,137]$ & \mc{1}{c|}{$I=[0,60]$}       &  $I=[0,25]$ & \mc{1}{c|}{$I=[15,25]$}    \\ \cline{1-5}
\mc{1}{l|}{Setting (A)}      & 5.26        &  \mc{1}{c|}{ 3.26}               & 3.18  &  \mc{1}{c|}{ 2.72 }      \\ 
\mc{1}{l|}{Setting (B)}      & 7.67         &  \mc{1}{c|}{ 3.64}               & 20.06  &  \mc{1}{c|}{ 1.83 }      \\ 
\mc{1}{l|}{Setting (C)}      & 4.55         &  \mc{1}{c|}{ 2.61}             &  0.92 &  \mc{1}{c|}{0.85}  
\end{tabular}
\end{center}
\caption{Estimated $p$-values given in percentage of the two-sample test based on $KS_{r_1,r_2}$  
used with $\mathrm{APF}_0$ and $\mathrm{APF}_1$ on different intervals $I$ to distinguish
between male and female subjects under settings (A), (B), and (C) described in Example 4.}\label{table:gender two sample test}
\end{table}

\subsubsection*{Acknowledgements}

Supported 
by The Danish Council for Independent Research | Natural Sciences, grant 7014-00074B, "Statistics for point processes in space and beyond", and 
by the "Centre for Stochastic Geometry and Advanced Bioimaging",
funded by grant 8721 from the Villum Foundation. Helpful discussions with Lisbeth Fajstrup on persistence homology is acknowledged. In connection to the brain artery trees dataset we thank James Stephen Marron and Sean Skwerer for helpful discussions and
the CASILab at The University of North Carolina at Chapel Hill for providing the data
distributed by the MIDAS Data Server at Kitware, Inc. We are grateful to the editors and the referees for useful comments.

\appendix
\section*{Appendix}
 
Appendix~\ref{s:sep}-\ref{s:proof of theorem confidence region} contain complements and additional examples to Sections~\ref{s:conf for APFS}-\ref{s:two sample problem}. 
Our setting and notation are as follows. 
All the examples are based on a simulated point pattern $\{x_1,\ldots,x_N\}\subset \R^2$ as described in Section~\ref{s:1:simulated dataset}, with 
$x_1,\ldots,x_N$ being IID points where
$N$ is a fixed positive integer.
As in Section~\ref{s:toy},
our setting corresponds to applications typically considered in TDA where the aim is to obtain topological information about a compact set $C\subset \R^2$ which is unobserved and where 
possibly noise appears: For specificity, we let $x_i=y_i+\epsilon_i$, $i=1,\ldots,N$, where 
$y_1,\ldots,y_N$ are IID points with support $C$, the noise $\epsilon_1,\ldots,\epsilon_N$ are IID and independent of $y_1,\ldots,y_N$, and $\epsilon_i$ follows the restriction to the square $[-10\sigma,10\sigma]^2$ of a bivariate zero-mean normal distribution with IID coordinates and 
standard deviation $\sigma\ge0$ (if $\sigma=0$ there is no noise).
We denote this distribution for $\epsilon_i$ by $N_2(\sigma)$ (the restriction to $[-10\sigma,10\sigma]^2$ is only imposed for technical reasons and is not of practical importance). 
We let $C_t$ be the union of closed discs of radii $t$ and centred at $x_1,\ldots,x_N$, 
and we study how the topological features of $C_t$ changes as $t\ge0$ grows. For this we use the Delaunay-complex mentioned in Section~\ref{s:toy}. 
Finally, we denote by $\mathcal{C}((a,b),r)$ the circle with center $(a,b)$ and radius $r$.

\section{Transforming confidence regions for persistence diagrams used for
 separating topological signal from noise}\label{s:sep}

As noted in Section~\ref{s:conf for APFS}
there exists several constructions and
results on confidence sets for persistence diagrams when the aim is to separate topological signal from noise, see~\cite{fasy:etal:14}, \cite{chazal:fasy:14}, and the references therein. 
We avoid presenting the technical description of these constructions and results, which depend on 
different choices of complexes (or more precisely so-called filtrations). For specificity, in this appendix we just consider the Delaunay-complex and discuss  the transformation of such a confidence region into one for an accumulate persistence function.

We use the following notation. As in the aforementioned references, consider the persistence 
diagram $\mathrm{PD}_k$ for an unobserved compact manifold $C\subset \R^2$ 
and obtained as in Section~\ref{s:toy} by considering the persistence as $t\ge0$ grows of $k$-dimensional topological features of the set consisting of all points in $\mathbb R^2$ within distance $t$ from  $C$. Note that $\mathrm{PD}_k$ is considered as being non-random and unknown; of course in our simulation study presented in Example~5 below we only pretend that $\mathrm{PD}_k$ is unknown. 
Let $\widehat{\mathrm{PD}}_{k,N}$ be  the random 
persistence diagram obtained as in Section~\ref{s:toy} from IID points $x_1,\ldots,x_N$ with support $C$.  
Let $\mathcal N=\{(b,d):\,b \le d,\,l\le 2  c_N\}$ be the set of points at distance $\sqrt{2}c_N$ of the diagonal in the persistence diagram. Let $S(b,d)=\{(x,y):\,|x-b|\le c_N,\,|y-d|\le c_N\}$ be the square with center $(b,d)$, sides parallel to the $b$- and $d$-axes, and of side length $2c_N$. 
Finally, let $\alpha\in(0,1)$.

\cite{fasy:etal:14} and~\cite{chazal:fasy:14} suggest various ways of constructing a bound $c_N>0$ so 
that an asymptotic conservative $100(1-\alpha)\%$-confidence region for 
$\mathrm{PD}_k$ with respect to the bottleneck distance $W_{\infty}$ is given by
\begin{equation}\label{e:confFazy}
 \liminf_{N\ra \infty} \mathrm P \left( W_{\infty} (\mathrm{PD}_k, \widehat{\mathrm{PD}}_{k,N} ) \le c_N  \right) \geq 1-\alpha
 \end{equation}
 where $W_{\infty}$ is the bottleneck distance defined in Section~\ref{s:1:APF}.
 The confidence region given by \eqref{e:confFazy} consists of those persistence diagrams 
$\mathrm{PD}_k$ which have exactly one point in each square $S(b_i,d_i)$, with $(b_i,d_i)$ a  
 point 
of $\widehat{\mathrm{PD}}_{k,N}$,
and have an arbitrary number of points in the set $\mathcal N$. 
\cite{fasy:etal:14} consider the points of $\widehat{\mathrm{PD}}_{k,N}$ falling in $\mathcal N$ as noise and the remaining points  as representing a significant topological feature of $C$.     

Using  \eqref{e:confFazy} an asymptotic conservative $100(1-\alpha)\%$-confidence 
region for the $\mathrm{APF}_k$ corresponding to $\mathrm{PD}_k$ is immediately obtained. 
This region will be bounded by two functions $\widehat{A}^{\mathrm{min}}_{k,N}$ and $\widehat{A}^{\mathrm{max}}_{k,N}$ specified by  $\widehat{\mathrm{PD}}_{k,N}$ and $c_N$. 
Due to the accumulating nature of $\mathrm{APF}_k$, the span between 
the bounds is an increasing function of the meanage. 
When using the Delaunay-complex, 
\cite{chazal:fasy:14} show that the span decreases as $N$ increases; this is illustrated in   Example~5 below.

\paragraph*{Example 5 (simulation study).} Let $C=\mathcal{C}((-1.5,0),1)\cup \mathcal{C}((1.5,0),0.8)$ and suppose each point $x_i$ is uniformly distributed on $C$. 
Figure~\ref{fig:simEx1} shows $C$ and an example of a simulated point pattern with $N=300$ points.
We use the bootstrap method implemented in the {\sf {\bf R}}-package {\tt TDA} and presented in \cite{chazal:fasy:14} to compute 
the $95\%$-confidence region for $\mathrm{PD}_1$ when $N=300$, see the top-left panel of Figure~\ref{fig:noise_APF}, 
where the two squares above the diagonal correspond to the two loops in $C$ and the other squares 
correspond to topological noise.  
Thereby $95\%$-confidence regions for  
$\mathrm{RRPD}_1$ (top-right panel) and $\mathrm{APF}_1$ (bottom-left panel) are obtained.
The confidence region for $\mathrm{APF}_1$ decreases as $N$ increases as demonstrated 
in the bottom panels where $N$ is increased from 300 to 500.  
As noticed in Section~\ref{s:1:APF}, we must be careful when using results based on the bottleneck metric, because
 small values of the bottleneck metric does not correspond to closeness of the two corresponding APFs: Although close persistence diagram with respect to the bottleneck distance imply that the two corresponding APFs are close with respect to the $L^q$-norm ($1\le q\le\infty$), the converse is not true. Hence, it is possible that an APF is in the confidence region plotted in Figure~\ref{fig:noise_APF} but that the corresponding persistence diagram is very different from the truth.

\begin{figure}[ht]
\begin{center}
\begin{tabular}{cc} 
 \includegraphics[scale=0.3]{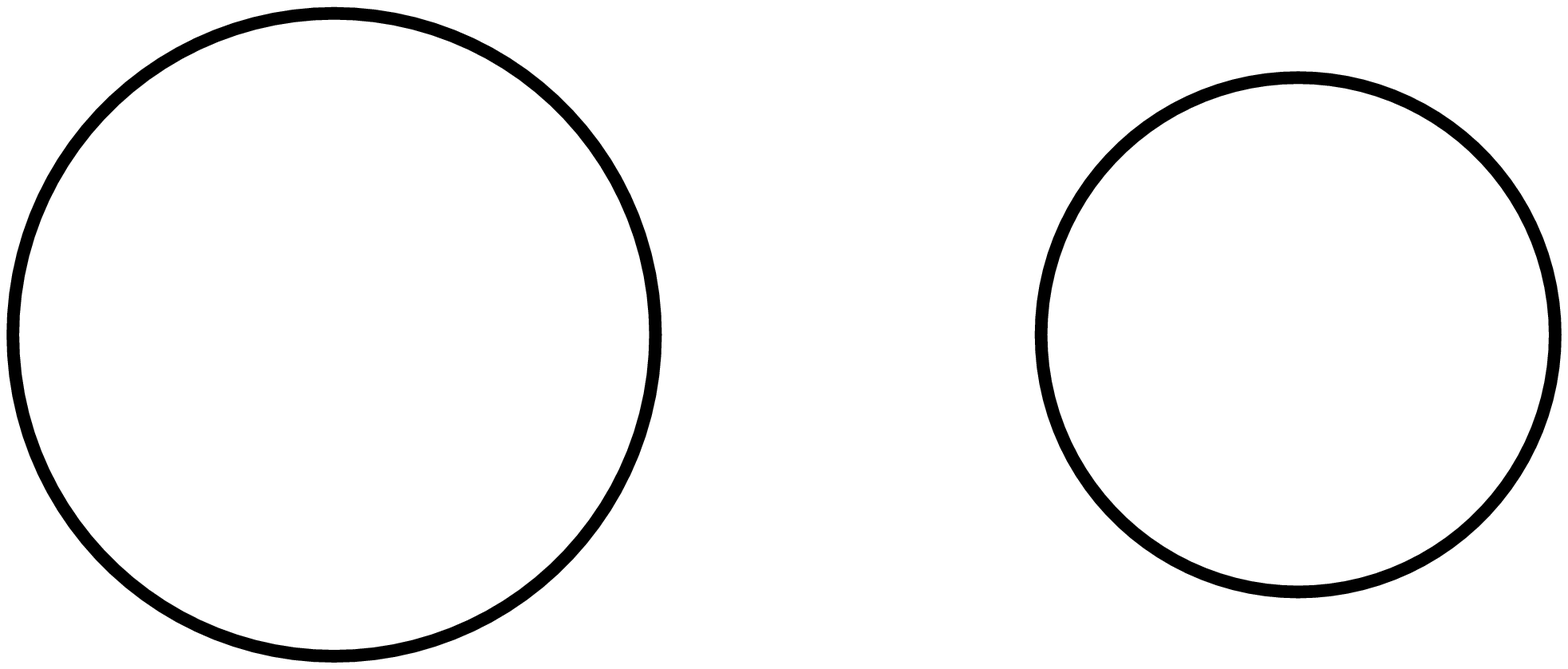} \hspace{1cm} & \hspace{1cm} \includegraphics[scale=0.3]{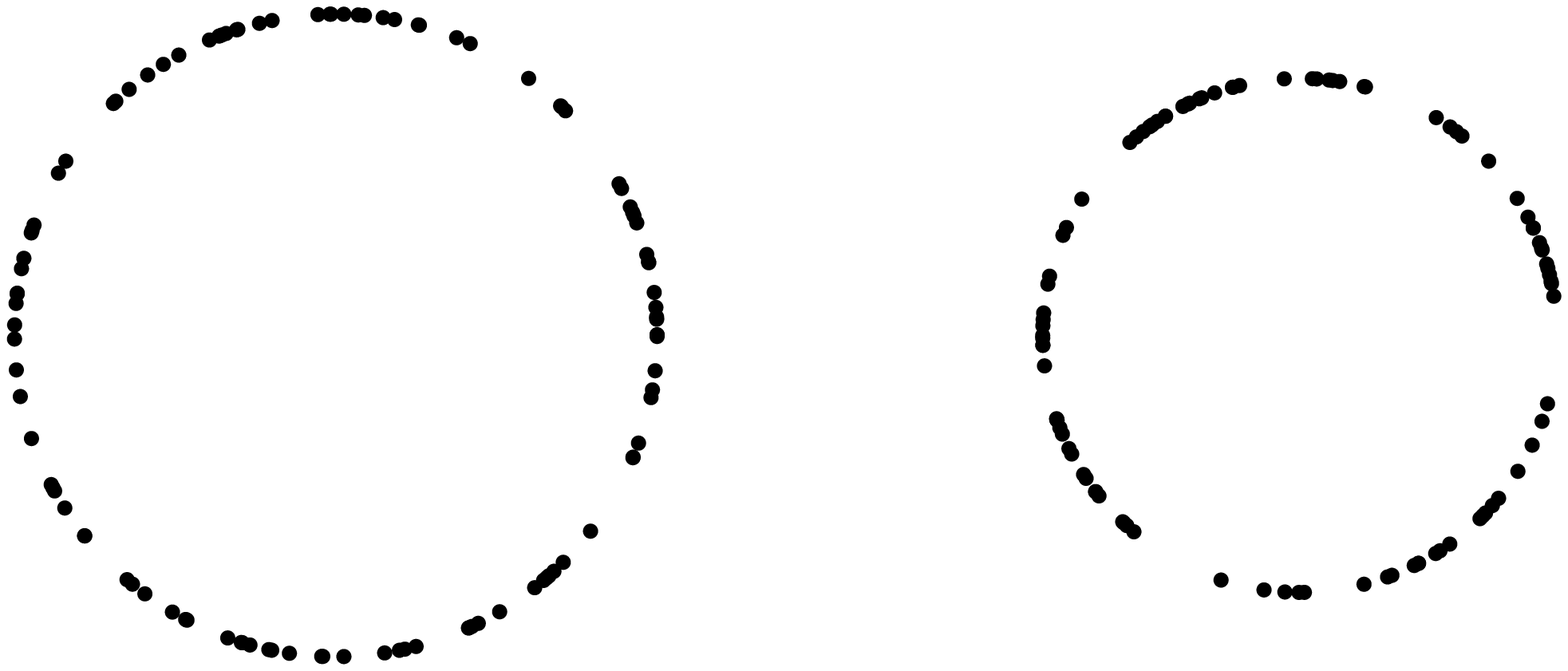} 
\end{tabular} 
\caption{The set $C$ in Example 5 (left panel) and a simulated point pattern of $N=300$  independent and uniformly 
distributed points on $C$ (right panel).}\label{fig:simEx1}
\end{center}
\end{figure}

 \begin{figure}[ht]
\begin{center}
\begin{tabular}{cc} 
\includegraphics[scale=0.2]{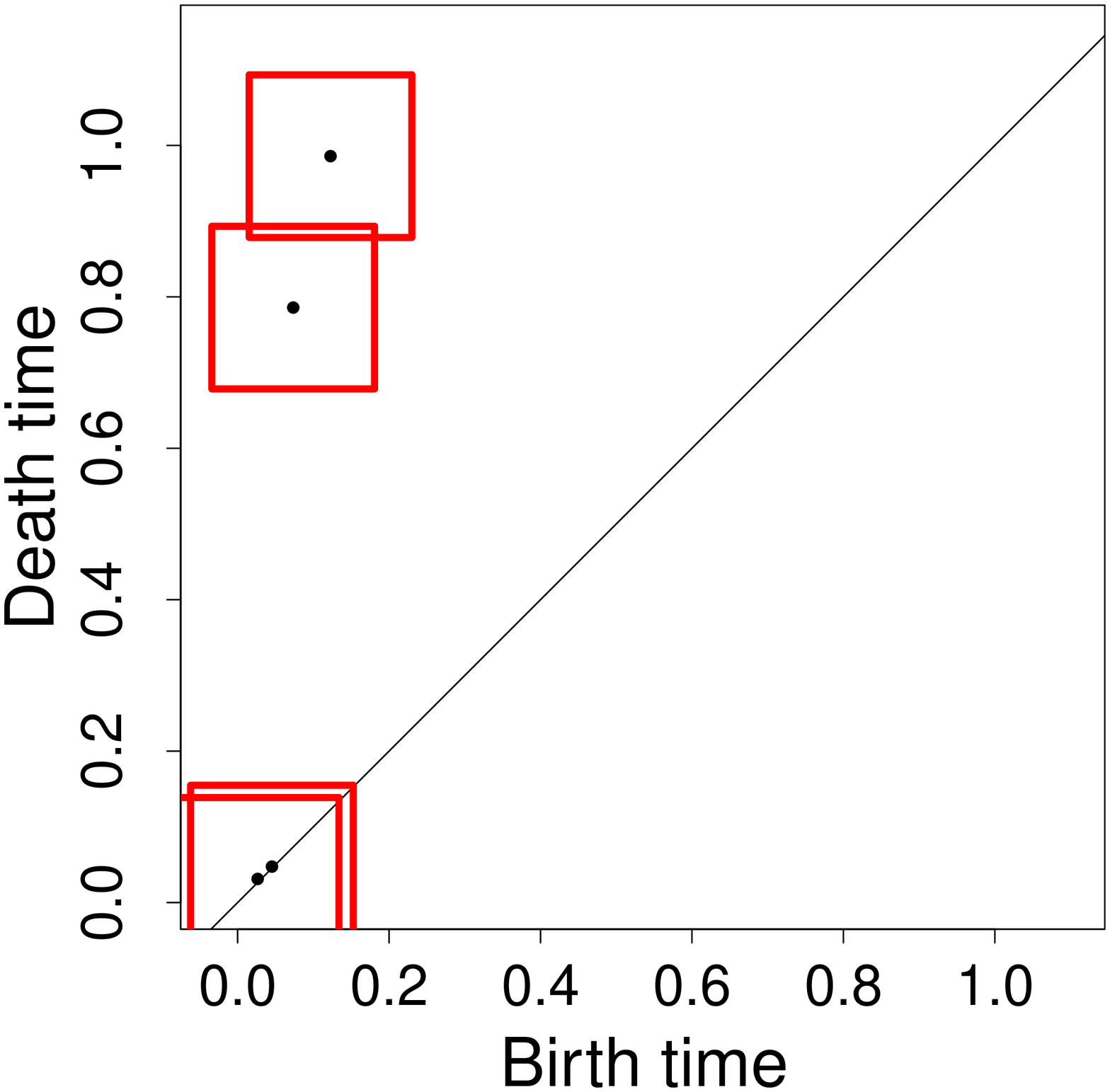}  & \includegraphics[scale=0.2]{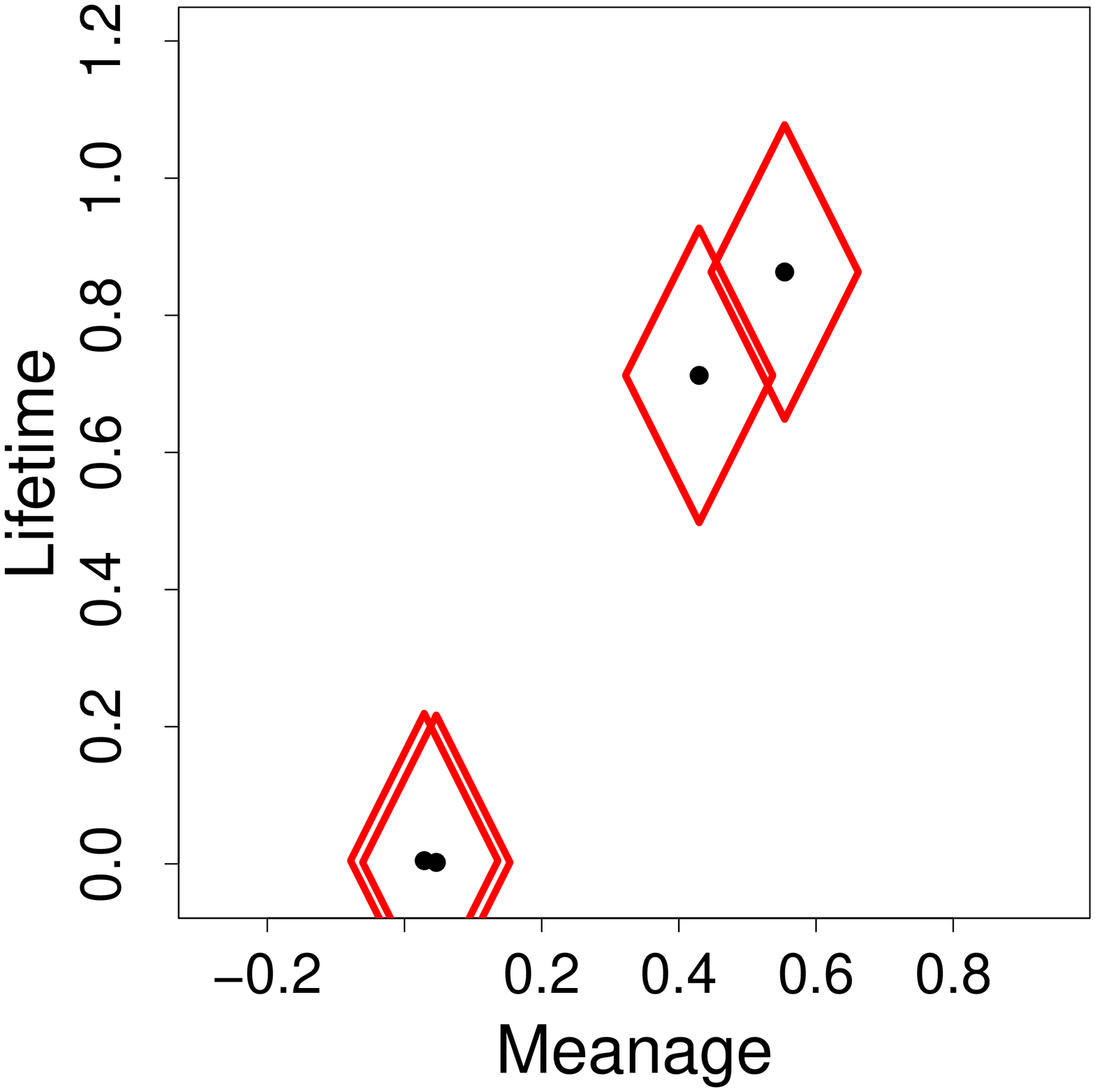} \\
\includegraphics[scale=0.2]{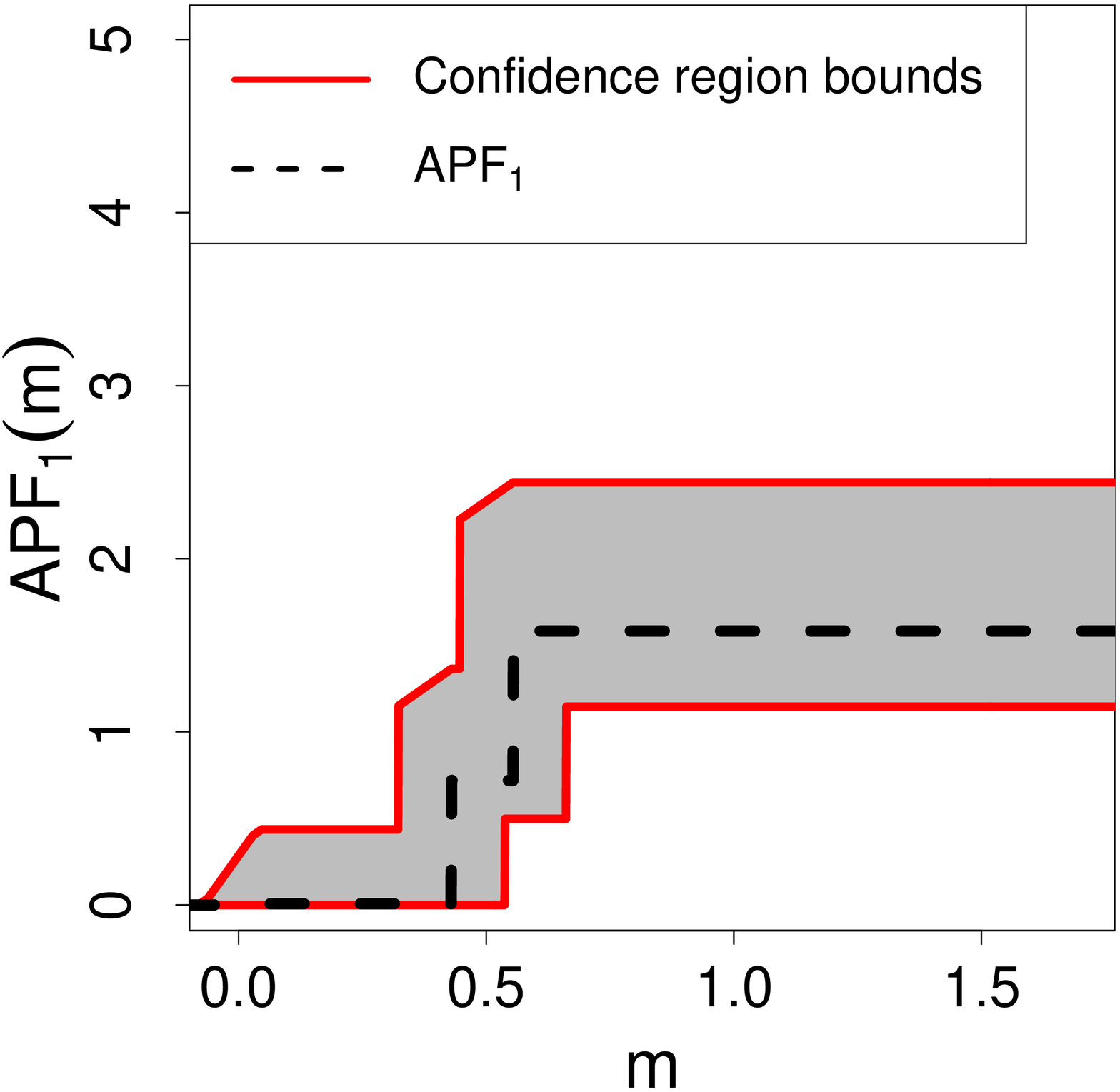} & \includegraphics[scale=0.2]{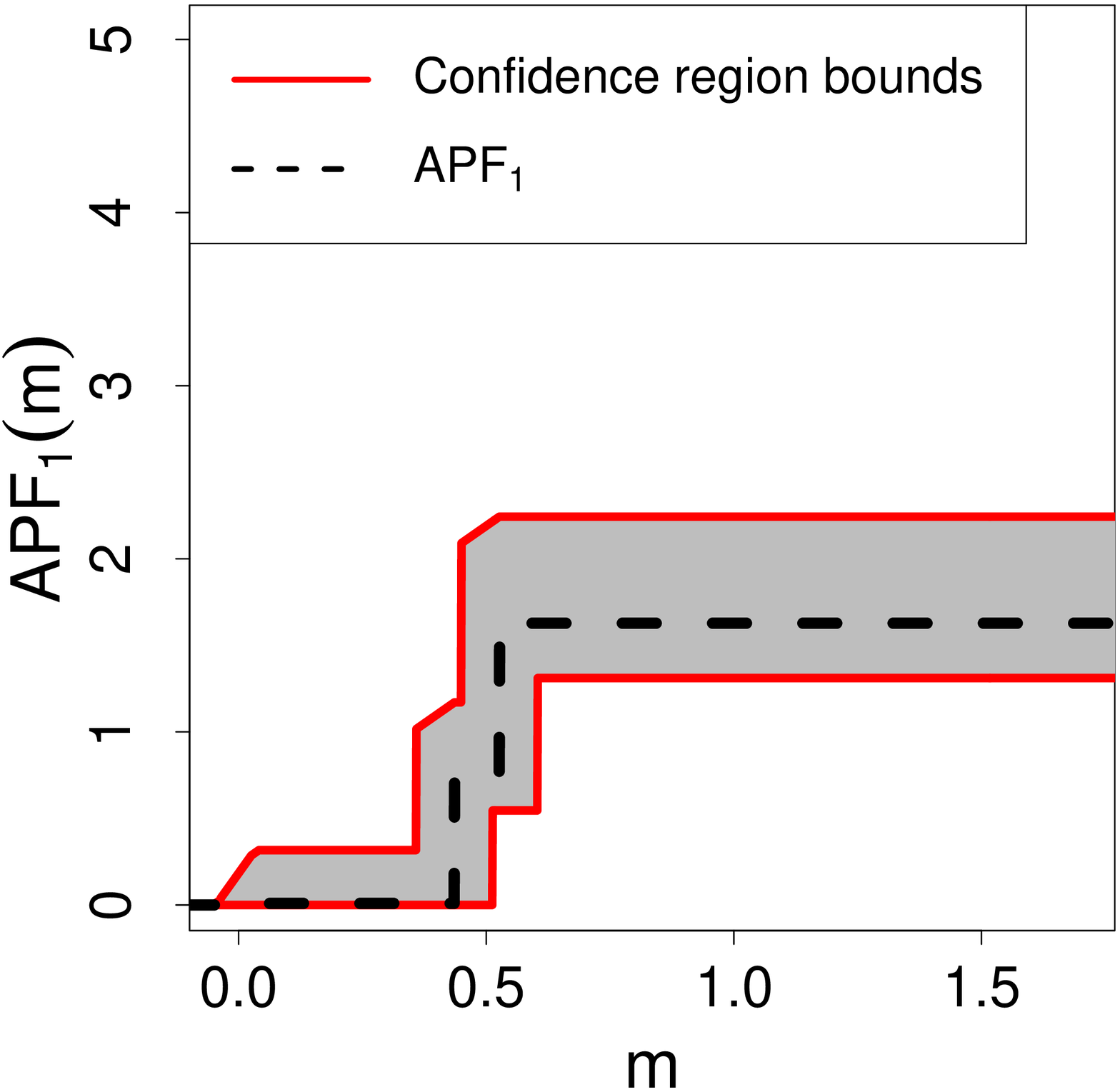}
\end{tabular} 
\caption{$95\%$-confidence regions obtained by  the bootstrap method 
for $\mathrm{PD}_1$ (top-left panel)
and its corresponding $\mathrm{RRPD}_1$ (top-right panel) when  $C$ and $x_1,\ldots,x_{300}$ are as in Figure~\ref{fig:simEx1}. 
The bottom-left panel shows the corresponding $95\%$-confidence region for $\mathrm{APF}_1$. 
The bottom-right panel shows the $95\%$-confidence region for $\mathrm{APF}_1$ when a larger point cloud with $300$ points is used.} \label{fig:noise_APF}
\end{center}
 \end{figure}

\section{Additional example related to Section 4.1 "Functional boxplot"}\label{s:appendix functional boxplot}

The functional boxplot described in Section~\ref{s:2:functional boxplot} can be used as an exploratory tool 
for the curves given by 
 a sample of $\mathrm{APF}_k$s. It
 provides a representation of the most central curve and the variation around this. It can also be used for outliers detection 
 as illustrated in the following example.

 \paragraph*{Example 6 (simulation study).}  We consider a sample of $65$ independent $\mathrm{APF}_k$s, 
  where the joint distribution of the first $50$ $\mathrm{APF}_k$s is exchangeable, whereas the last $15$  play the role of outliers. We suppose each $\mathrm{APF}_k$ corresponds to   
a point process of $100$ IID points, where each point $x_i$ follows one of the following
 distributions $\mathrm P_1,\ldots,\mathrm P_4$. 
 \begin{itemize}
\item  $\mathrm  P_1$ (unit circle):  $x_i$ is a uniform point on $\mathcal{C}((0,0),1)$ perturbed by $N_2\left(0.1 \right)$-noise.
\item  
 $\mathrm  P_2$ (Gaussian mixture): Let $y_i$ follow $N_2\left(0.2 \right)$, then $x_i=y_i$ with probability $0.5$, and $x_i=y_i+(1.5,0.5)$ otherwise. 
 \item   
 $\mathrm  P_3$ (two circles): $x_i$ is a uniform point on $\mathcal{C}((-1,-1),1)\cup \mathcal{C}((1,1),0.5)$ perturbed by $N_2\left((0,0),0.1 \right)$-noise. 
 \item  
 $\mathrm  P_4$ (circle of radius $0.7$): $x_i$ is a uniform point on $\mathcal{C}((0,0),0.7)$ perturbed by $N_2\left(0.1 \right)$-noise.     
 \end{itemize} 
We let the first  $50$ point processes be obtained from $\mathrm P_1$ (the distribution for non-outliers), 
 the  next $5$ from  $\mathrm  P_2$, the following $5$ from  $\mathrm  P_3$, and the final $5$ from $\mathrm  P_4$. Figure~\ref{fig:4patterns} shows 
 a simulated realization of each of the four types of point processes.
 
 \begin{figure}
 \begin{center}
\begin{tabular}{c}
\includegraphics[scale=0.25]{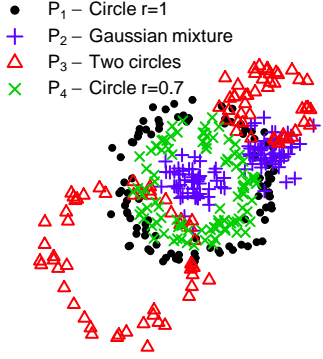}
\end{tabular}
 \caption{Simulated realizations of the four types of point processes, each consisting of 100 IID points with distribution either $\mathrm  P_1$ (black dots), 
 $\mathrm  P_2$ (blue crosses), $\mathrm  P_3$ (red triangles), or $\mathrm  P_4$ (rotated green crosses).} \label{fig:4patterns}
 \end{center}
\end{figure}
 
  Figure~\ref{fig:functionalboxplot} shows the functional boxplots when considering $\mathrm{APF}_0$ (left panel) and $\mathrm{APF}_1$ (right panel).
  The curves detected as outliers and corresponding to the distributions
  $\mathrm P_2, \mathrm P_3$, and $\mathrm P_4$ are plotted in red, blue, and green, respectively. In both panels   the outliers detected by the 1.5 criterion agree with the true outliers. 
  
  In the left panel, each curve has an accumulation of small jumps between $m=0$ and $m\approx 0.1$, corresponding to the moments where the points associated to each circle are connected by the growing discs in the sequence $\{C_t\}_{t\ge0}$. The curves corresponding to realisations of $P_3$ have a jump at $m\approx 0.38$ which corresponds to the moment where the points associated to the two circles used when defining $P_3$ are connected by the growing discs in the sequence $\{C_t\}_{t\ge0}$.
  The points following the distribution $P_4$ are generally closer to each other than the ones following the distribution $P_1$ as the radius of the underlying circle is smaller. This  
  corresponds  to more but smaller jumps in $\mathrm{APF}_0$ for small meanages, and hence the curves of $\mathrm{APF}_0$
  are lower when they correspond to realisations of $P_1$ than to realisations of $P_4$; and as expected, for large meanages, 
  the curves of $\mathrm{APF}_0$
  are larger when they correspond to realisations of $P_1$ than to realisations of $P_4$. Note that if we redefine $P_4$ so that the $N_2\left(0.1 \right)$-noise is replaced by $N_2\left(0.07 \right)$-noise, then the curves would be the same up to rescaling. 
  
  In the right panel, we observe clear jumps in all $\mathrm{APF}_1$s obtained from $P_1$, $P_3$, and $P_4$. 
 These jumps correspond to the first time that the loops of the circles in $P_1$, $P_3$, and $P_4$ are covered by the union of growing discs in the sequence $\{C_t\}_{t\ge0}$. Once again, if we have used $N_2\left(0.07 \right)$-noise in place of $N_2\left(0.1 \right)$-noise in the definition of $P_4$, the curves would be the same up to rescaling.

  If we repeat everything but with the distribution $P_4$ redefined so that $\mathcal{C}((0,0),0.7)$ is replaced by $\mathcal{C}((0,0),0.8)$, then the support of $P_4$ is closer to that of $P_1$ and it becomes harder in the case of $\mathrm{APF}_0$
  to detect the outliers with distribution $P_4$ (we omit the corresponding plot); thus further simulations for determining   
 a stronger criterion would be needed.

\begin{figure}
\centering
\begin{tabular}{cc} 
\includegraphics[scale=0.25]{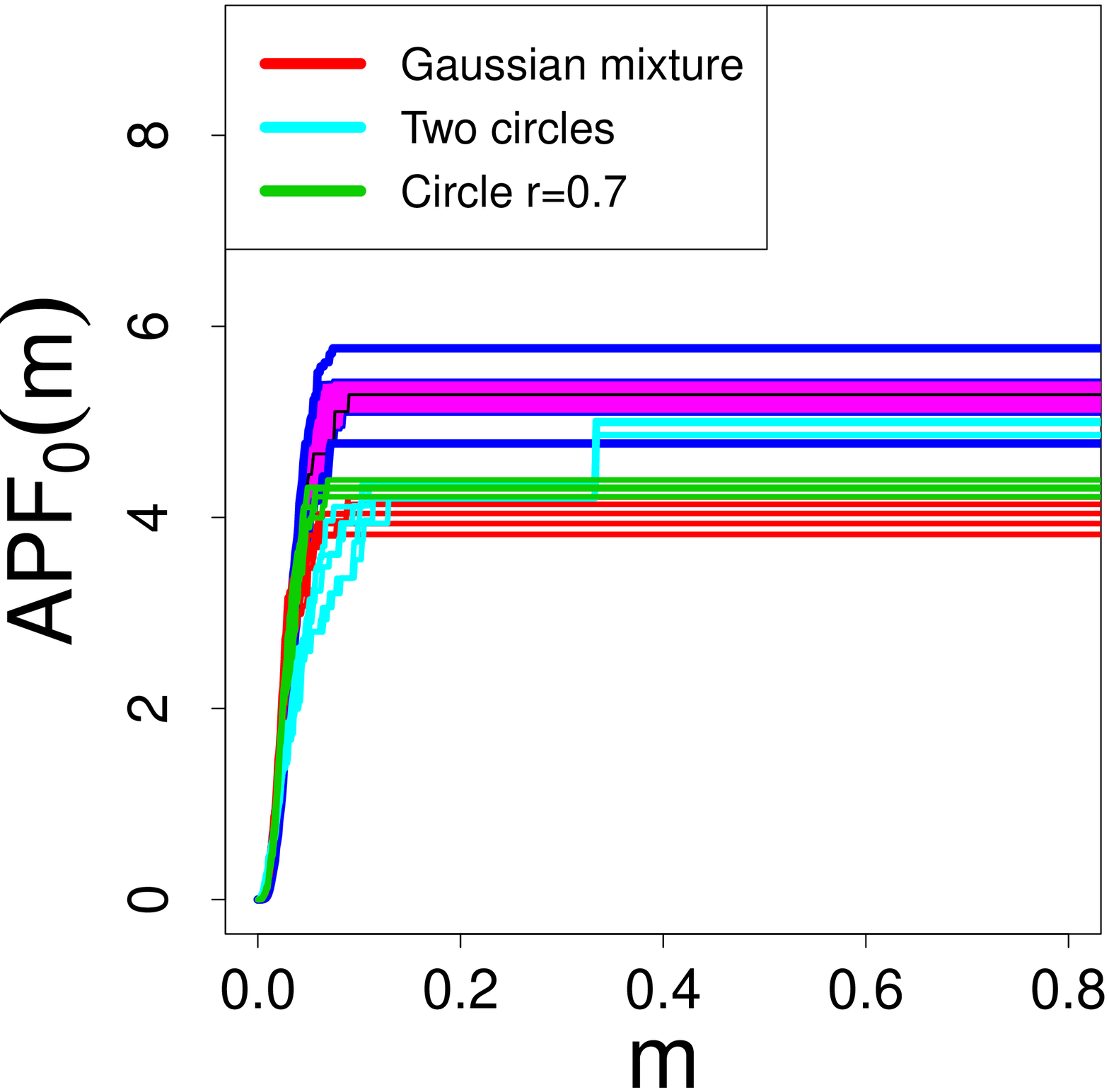} & \includegraphics[scale=0.25]{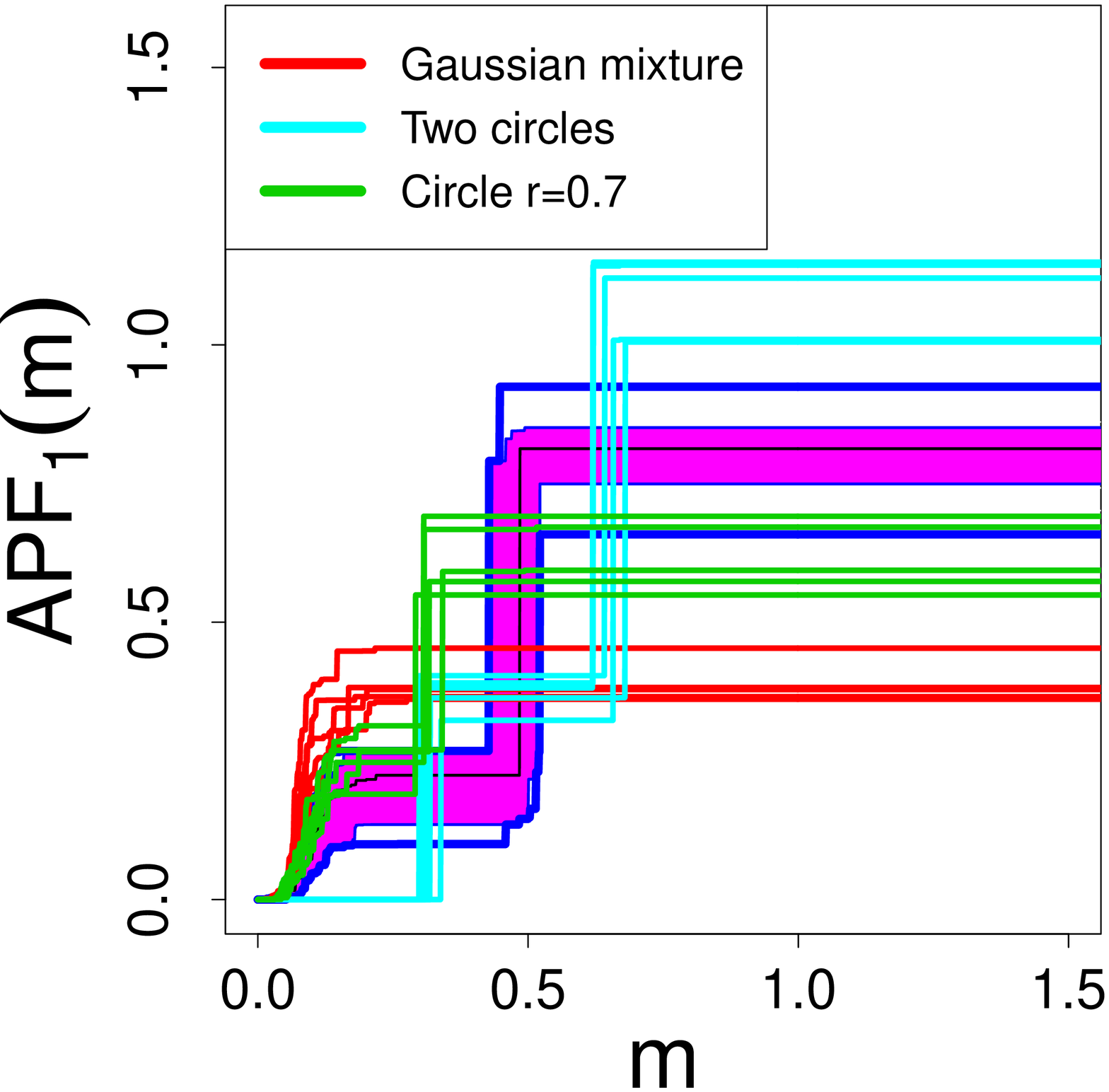}  
\end{tabular}
\caption{Functional boxplots of $65$ APFs based on the topological features of dimension $0$ (left panel) and $1$ (right panel). 
In each panel, 50, 5, 5, and 5 APFs are obtained from the Delaunay-complex of $100$ 
 IID points from the distribution $\mathrm P_1$, $\mathrm P_2$, $\mathrm P_3$, and $\mathrm P_4$, respectively.  
The APFs detected as outliers are plotted in red, blue, and green in the case of $\mathrm P_2$, $\mathrm P_3$, and $\mathrm P_4$, respectively.}\label{fig:functionalboxplot}
 \end{figure}

\section{Additional example related to Section 4.2 "Confidence region for the mean function"}\label{s:appendix crmean}

This appendix provides yet an example to illustrate the bootstrap method in Section~\ref{s:crmean} for obtaining
 a confidence region for the mean function of a sample of IID $\mathrm{APF}_k$s. 

\paragraph*{Example 7  (simulation study).}
Consider 50 IID copies of a point process consisting of 100  
independent and uniformly distributed points on the union of three circles
with radius $0.25$ and centred at $(-1,-1)$, $(0,1)$, 
 and $(1,-1)$, respectively (these circles were also considered in the example of Section~\ref{s:toy}). A simulated realization of the point process is shown in the left panel of Figure~\ref{fig:mean_apf}, and 
 the next two panels show simulated confidence regions for
 $\mathrm{APF}_0$ and $\mathrm{APF}_1$, respectively, 
when the bootstrap procedure with $B=1000$ is used. 
In the middle panel, between $m=0$ and $m=0.2$,  
there is an accumulation of small jumps corresponding to the moment when each circle is covered by the union of growing discs
from the sequence $\{C_t\}_{t\ge0}$; we interpret these small jumps as topological noise. 
The jump at $m\approx 0.25$ corresponds to the moment when the circles 
centred at $(-1,-1)$ and $(1,-1)$ are connected by the growing discs, and
the jump at $m\approx 0.3$ to when
all three circles are connected by the growing discs. 
In the right panel, at $m\approx 0.3$ there is an accumulation 
of small jumps corresponding to the moment when the three circles are connected by the growing discs and they form a
loop at $m=0.25$ in Figure~\ref{fig:TDA_tuto}. The disappearance of this loop at $m=0.69$ in Figure~\ref{fig:TDA_tuto} corresponds to the jump at $m\approx 0.7$ in Figure~\ref{fig:mean_apf}.

 \begin{figure}
\begin{center}
\resizebox{\columnwidth}{!}{
\begin{tabular}{ccc} 
\includegraphics[scale=0.25]{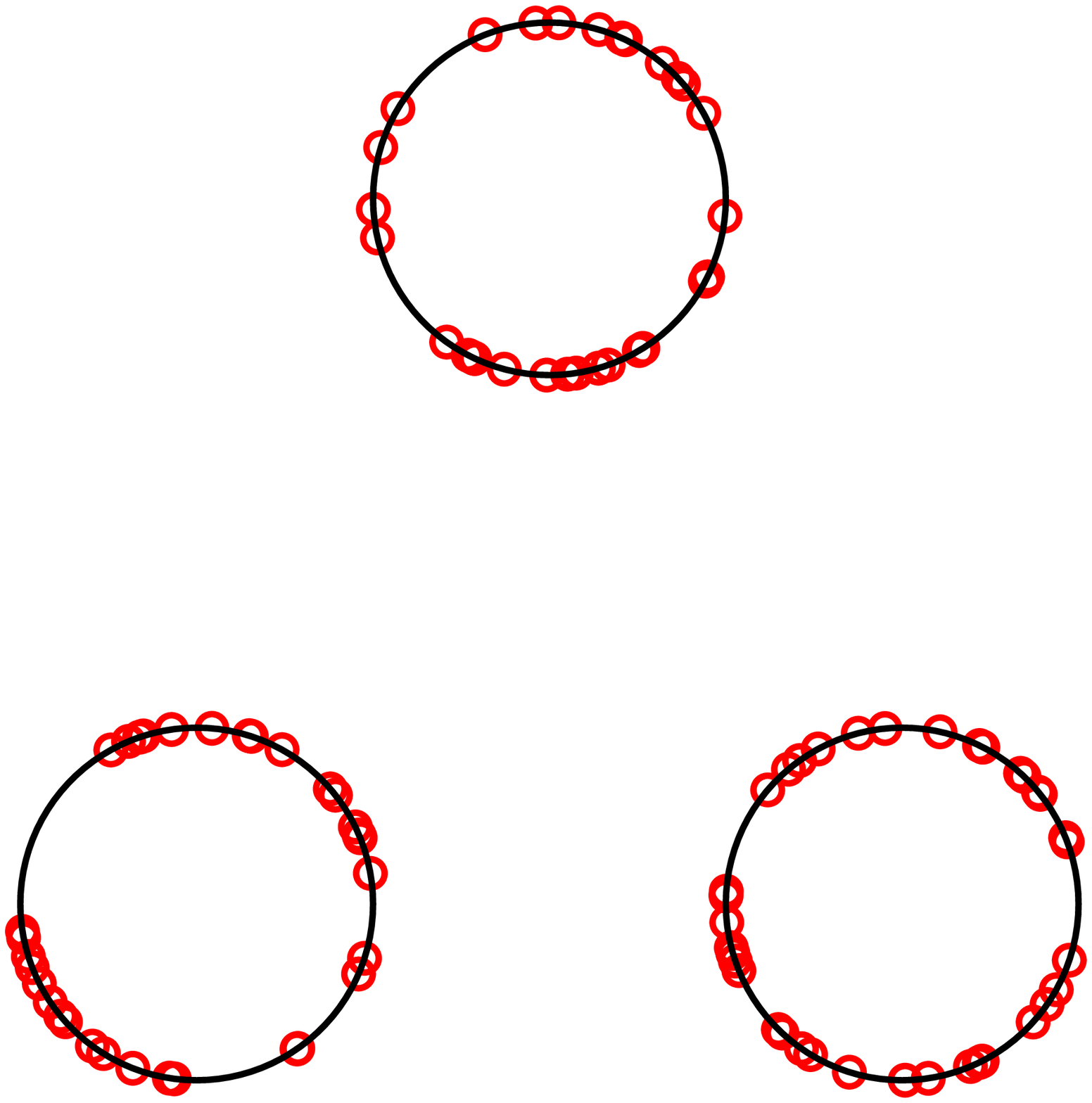} & \includegraphics[scale=0.25]{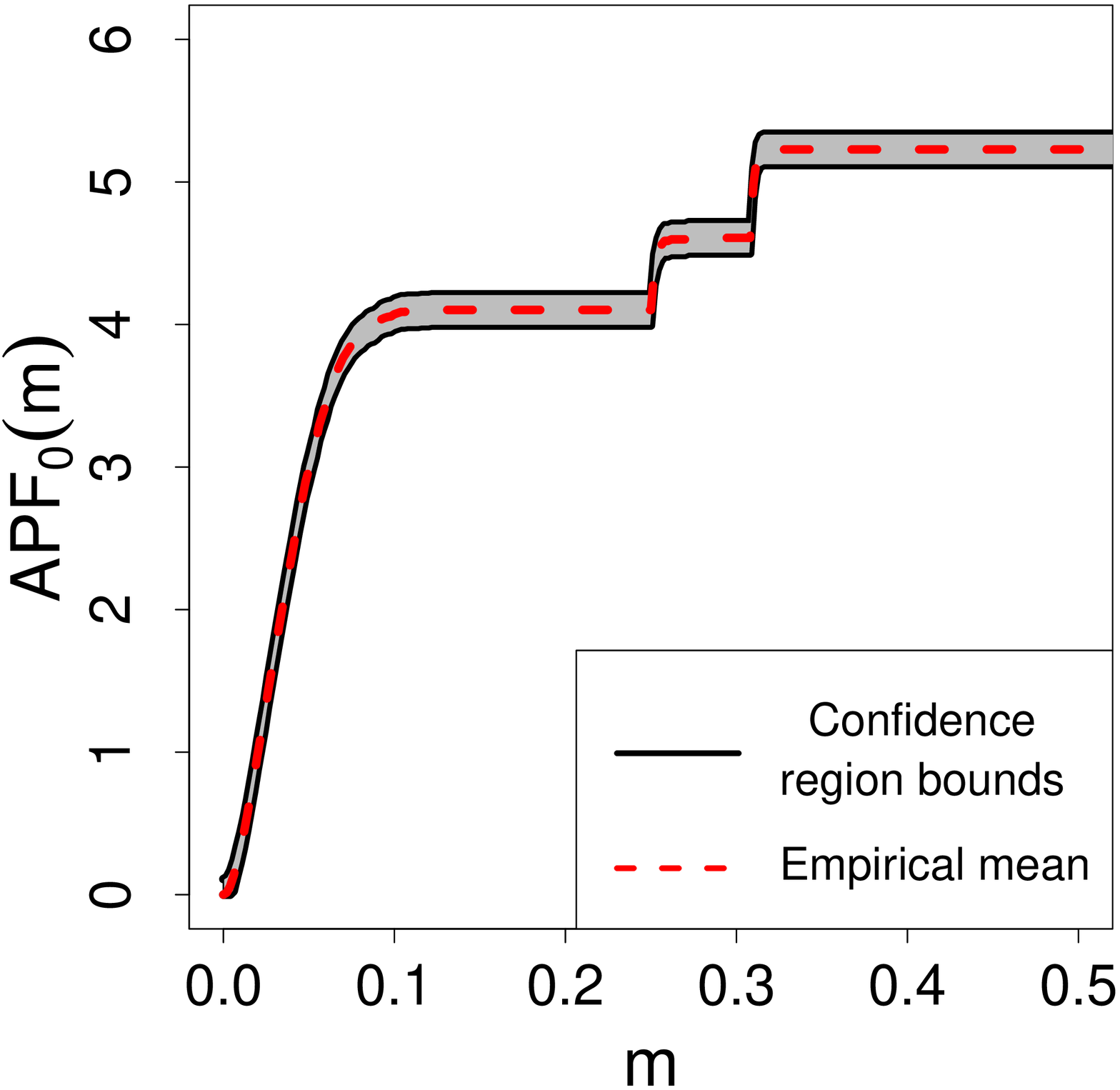}  & \includegraphics[scale=0.25]{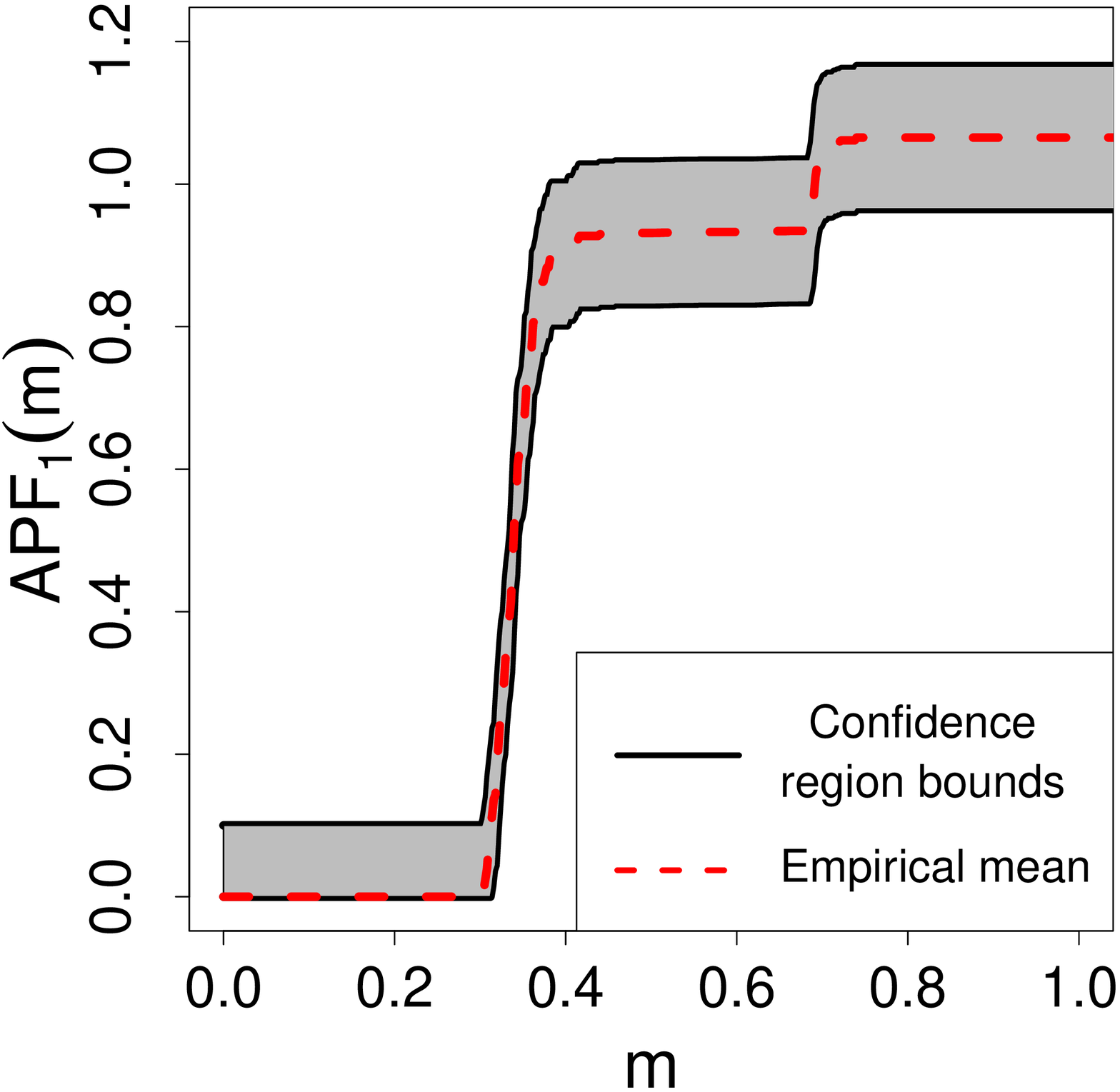} 
\end{tabular}
}
\caption{A simulation of 100 independent and uniformly distributed points on the union of three circles (dashed lines) with the same radius $r=0.5$ and centred  at $(-1,-1)$, $(0,1)$, and $(1,-1)$ (left panel). The $95\%$-confidence regions for the mean $\mathrm{APF}_0$ (middle panel) and the mean $\mathrm{APF}_1$ (right panel) are based on 50 IID simulations. }\label{fig:mean_apf}
\end{center}
 \end{figure}

\section{Additional example to Section 5 "Two samples of accumulated persistence functions"}\label{s:appendix two sample test}

Section~\ref{s:two sample problem} considered two samples of independent $\mathrm{RRPD}_k$s $D_1,\ldots,D_{r_1}$ and $E_1,\ldots,E_{r_2}$, 
where each $D_i$ ($i=1,\ldots,r_1$) has distribution $\mathrm{P}_D$ 
and each $E_j$ has distribution $\mathrm{P}_E$ ($j=1,\ldots,r_2$). Then we studied a bootstrap two-sample test to asses the null hypothesis $\mathcal{H}_0$: $\mathrm{P}_D = \mathrm{P}_E$, e.g.\ in connection to the brain artery trees.  An additional example showing the performance of the test is presented below.

 \paragraph*{Example 8  (simulation study).}
 Let $\mathrm{P}_D$ be the distribution of a $\mathrm{RRPD}_k$ obtained from $100$ independent and uniformly distributed points on $\mathcal{C}((0,0),1)$ perturbed by $N_2\left(0.2 \right)$-noise, and define
$\mathrm{P}_E$ in a similar way but with a circle of radius $0.95$.  
A simulated realisation of each point process is shown in Figure~\ref{fig:twosample}; it seems difficult to recognize  
that the underlying circles are different. Let us consider the two-sample
test statistics \eqref{e:KSfirst} and \eqref{critical value L1} 
 with  $I=[0,3]$, $r_1=r_2=50$, and $\alpha=0.05$. Over $500$ simulations of the two samples of $\mathrm{RRPD}_k$ we obtain the following
percentage of rejection:
For  $M_{r_1,r_2}$, $5.2\%$ if $k=0$, and
$24.2\%$ if $k=1$. 
For $KS_{r_1,r_2}$ much better results are observed, namely $73.8\%$ if $k=0$, and 
$93.8\%$ if $k=1$, where this high percentage is mainly caused by  
 the largest lifetime of a loop.

\begin{figure}
\begin{center}
\begin{tabular}{cc}
 \includegraphics[scale=0.25]{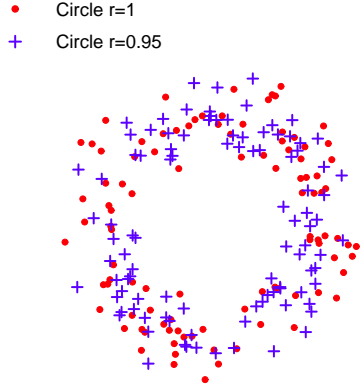}  
\end{tabular}
\end{center}
\caption{A simulation of $100$ independent and uniformly distributed points on the circle centred at $(0,0)$ with radius $1$ and perturbed by $N_2\left((0,0),0.2 \right)$-noise (red dots), together with $100$ independent and uniformly distributed points on the circle centred at $(0,0)$ with radius $0.95$ and perturbed by $N_2\left((0,0),0.2 \right)$-noise (blue crosses).
} \label{fig:twosample}
\end{figure}

\section{Further methods for two or more samples of accumulated persistence functions}\label{s:group of APFs}
\subsection{Clustering}\label{s:clusteringappendix}

Suppose  $A_1,\ldots,A_r$ are $\mathrm{APF}_k$s 
which we want to label into $K<r$ groups by using a method of 
clustering (or unsupervised classification). 
Such methods are studied many places in the literature for functional data, 
see the survey in~\cite{julien:preda:14}. In particular, 
\cite{chazal:cohen:09},~\cite{chen:etal:15}, and~\cite{robins:turner:16} consider clustering in connection to 
$\mathrm{RRPD}_k$s. 
Whereas the $\mathrm{RRPD}_k$s are two-dimensional functions, 
it becomes easy to use clustering for the one-dimensional $\mathrm{APF}_k$s as illustrated in Example 9 below.

For simplicity we just consider the standard technique known as 
the $K$-means clustering algorithm (\cite{hartigan:wong:79}). For more complicated applications than considered in Example~9 the EM-algorithm may be needed for the $K$-means clustering algorithm. 
As noticed by a referee, to avoid the use of the EM-algorithm we can modify~\eqref{critical value KS} or~\eqref{critical value L1} and thereby construct a distance/similarity matrix for different APFs which is
used to perform hierarchical clustering. However, for Example~9 the results using hierarchical clustering (omitted here) were not better than with the $K$-means algorithm.

Assume that $A_1,\ldots,A_r$ are pairwise different 
and square-integrable functions on $[0,T]$, where $T$ is a user-specified parameter.
For example, if $\mathrm{RRPD}_k \in \mathcal{D}_{k,T,n_{\mathrm{max}}}$ 
(see Section~\ref{s:confidence region mean}), then $\mathrm{APF}_k \in L^2([0,T])$. 
The $K$-means clustering algorithm works as follows.
\begin{itemize}
\item Chose  uniformly at random  a subset of $K$  functions from $\{A_1,\ldots,A_r \}$; call these functions centres and label them by $1,\ldots,K$. 
\item Assign each non-selected $\mathrm{APF}_k$ the label $i$ if it is closer to the centre of label $i$
than to any other centre with respect to the $L^2$-distance on  $L^2([0,T])$. 
\item In each group, reassign the centre by the mean curve of the group (this may not be 
an $\mathrm{APF}_k$ of the sample). 
\item Iterate these steps until the assignment of centres does not change.
\end{itemize}
The  algorithm is known to be convergent, 
however, it may have several drawbacks as discussed in~\cite{hartigan:wong:79} and \cite{bottou:bengio:95}.
   
\paragraph*{Example 9  (simulation study).} Consider $K=3$ groups, each consisting of 50 $\mathrm{APF}_0$s 
and associated to point processes consisting of
 $100$ IID  points, where each point $x_i$ follows one of the following 
 distributions $\mathrm P_1$, $\mathrm P_2$, and $\mathrm P_3$ for groups 1, 2, and 3, respectively.
\begin{itemize}
\item  $\mathrm P_1$ (unit circle): $x_i$ is a uniform point on $\mathcal{C}((0,0),1)$ perturbed by $N_2\left(0.1 \right)$-noise. 
\item  
$\mathrm  P_2$ (two circles): $x_i$ is a uniform point on $\mathcal{C}((-1,-1),0.5)\cup \mathcal{C}((1,1),0.5)$ perturbed by $N_2\left(0.1 \right)$-noise. 
\item
$\mathrm P_3$ (circle of radius 0.8): $x_i$ is a uniform point on $\mathcal{C}((0,0),0.8)$ perturbed by $N_2\left(0.1 \right)$-noise. 
\end{itemize}
We start by simulating a realization of each of the $3\times 50=150$ point processes. 
The left panel of Figure~\ref{fig:clustering} shows one realization of each type of point process; it seems difficult to distinguish the underlying circles for groups 1 and 3, but  
the three $\mathrm{APF}_0$s associated to these three point patterns  are in fact
 assigned to their right groups. The right panel of Figure~\ref{fig:clustering} shows the result of the $K$-means clustering algorithm.
 Here we are
using the
{\sf \bf R}-function ``kmeans'' for the $K$-means algorithm and it takes only a few seconds when 
evaluating each $A_i(m)$  at  $2500$ 
equidistant values of $m$ between $0$ and $T=0.5$. As expected we see more overlap between the curves of the $\mathrm{APF}_0$s assigned to groups 1 and 3. 

We next repeat 500 times the simulation of the 150 point processes. 
A clear distinction between the groups is obtained by the $K$-means algorithm applied for connected components: 
The  percentage of wrongly assigned
 $\mathrm{APF}_0$s among the $500\times3\times50=75000$ $\mathrm{APF}_0$s has an average of $4.5\%$ 
 and a standard deviation of $1.6\%$. The assignment error is in fact mostly caused by 
incorrect labelling  of  $\mathrm{APF}_0$s associated to $\mathrm P_1$ or $\mathrm P_3$.  
This is expected as the  underlying circles used  in the definitions of $P_1$ and $P_3$ are rather close,  whereas the underlying set in the definition of $P_2$ is different with two connected components as represented by the jump at $m\approx 0.4$ in the middle panel of Figure~\ref{fig:clustering}.
 
Even better results are obtained when considering loops
instead of connected components: The  percentage of wrongly assigned
 $\mathrm{APF}_1$s among the $75000$ $\mathrm{APF}_1$s has an average of $1.6\%$ 
 and a standard deviation of $1.0\%$. This is mainly due to the sets underlying $P_1$, $P_2$, and $P_3$ which have distinctive loops that results in clear distinct jumps in the $\mathrm{APF}_1$s as seen in the right panel of Figure~\ref{fig:clustering}.  
 
\begin{figure}[ht]
\begin{center}
\begin{tabular}{ccc}
\includegraphics[scale=0.2]{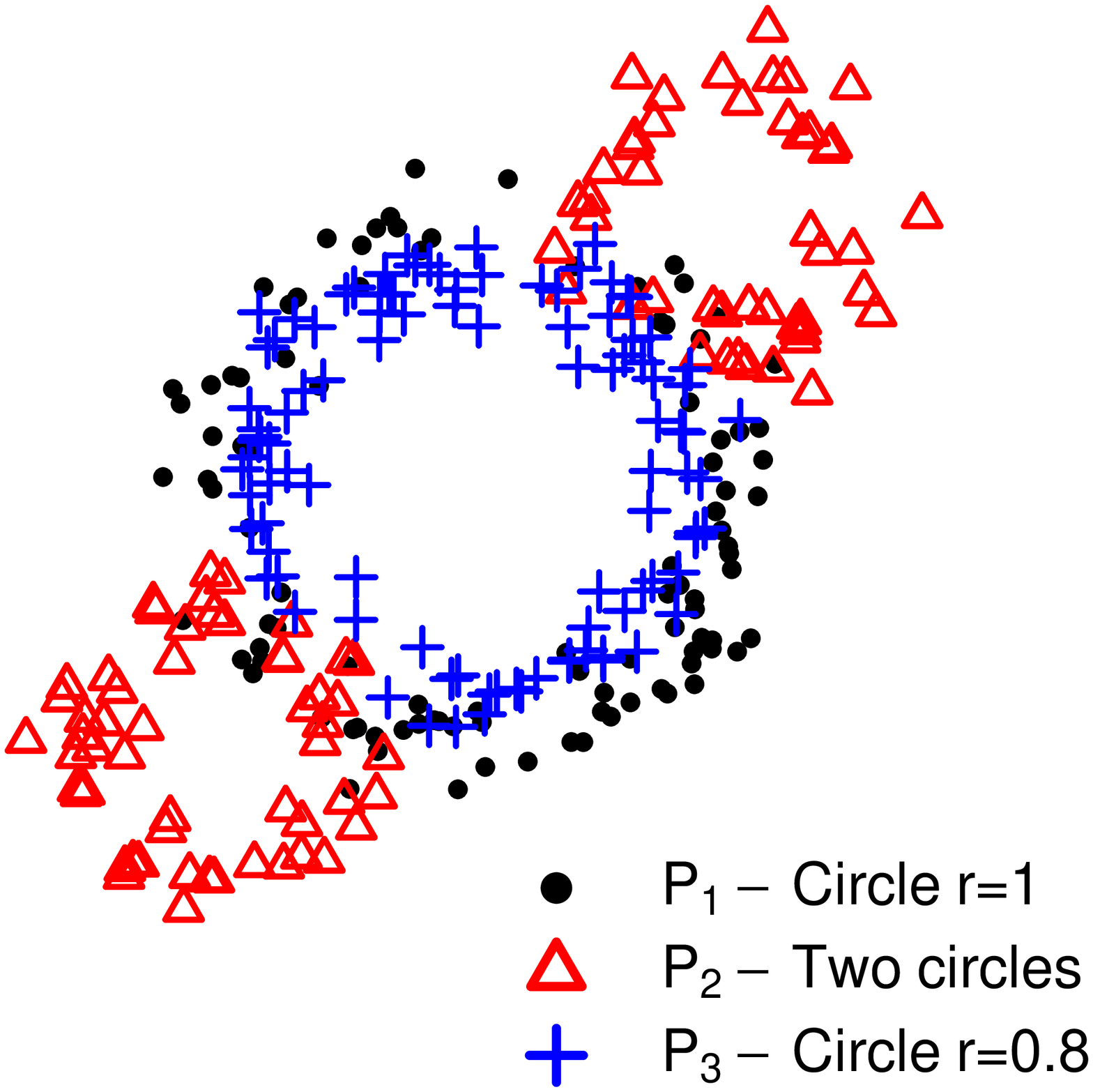} &  \includegraphics[scale=0.2]{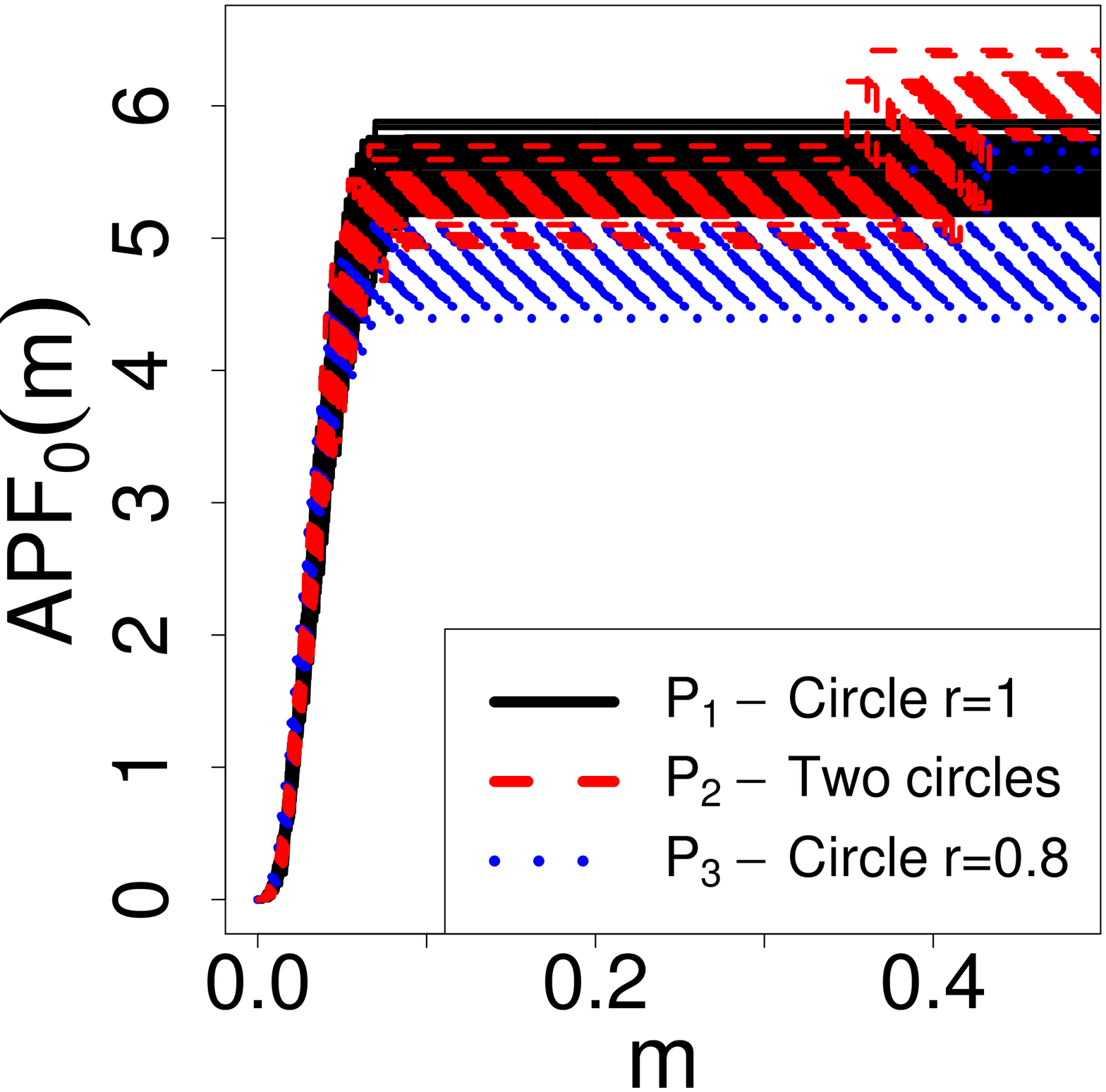} & \includegraphics[scale=0.2]{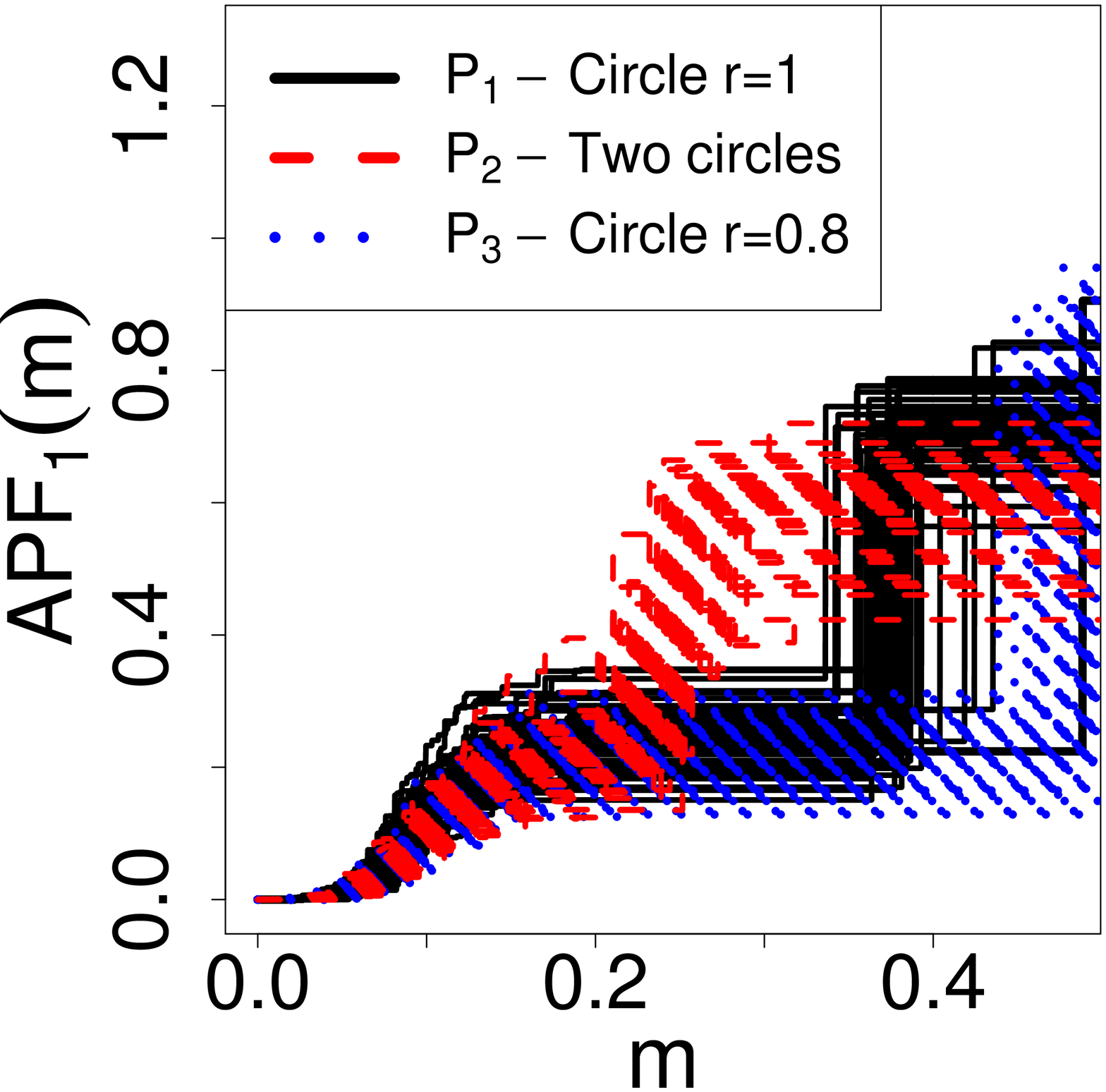}
\end{tabular}
\end{center}
\caption{Left panel: Simulated example of the three point processes, each consisting of $100$ IID points drawn from the distribution $P_1$ (black dots), $P_2$ (red triangles), or $P_3$ (blue crosses). 
Middle  panel: The 150 $\mathrm{APF}_0$s obtained from the simulation of the $150$ point processes associated to $\mathrm P_1$, $\mathrm P_2$, or $\mathrm P_3$, where the colouring in black, red, or blue specifies whether the  $K$-means algorithm assigns an $\mathrm{APF}_0$ to the group associated to $P_1$, $P_2$, or $P_3$. Right panel: As the middle panel but for the 150 $\mathrm{APF}_1$s.}
\label{fig:clustering}
\end{figure}

\subsection{Supervised classification}\label{s:supervised classification}

Suppose we want to assign an $\mathrm{APF}_k$ to a training set of $K$ different groups $ \mathcal G_1 ,\ldots, \mathcal G_K$,
where $\mathcal{G}_i$ is a sample of $r_i$ independent $\mathrm{APF}_k$s $A^i_1, \ldots, A^i_{r_i}$. For this purpose  supervised classification methods for functional data
may be adapted. 

We just consider 
a particular method by~\cite{lopez:romo:10}:
Suppose $\alpha\in[0,1]$ and we believe that at least $100(1-\alpha)\%$ 
of the $\mathrm{APF}_k$s in each group are IID, whereas the remaining $\mathrm{APF}_k$s in each group follow a different distribution and are considered as outliers (see Section~\ref{s:2:functional boxplot}). 
For a user-specified parameter $T>0$ and $i = 1, \ldots , K$,  
define the $100\alpha\%$-trimmed mean $\overline{A}^{\, \alpha}_{i} $ with respect to $\mathcal G_i$
as the   mean function on $[0,T]$  of the $100(1-\alpha)\%$  $\mathrm{APF}_k$s in $\mathcal G_i$ with the largest
$\MBD_{r_i}$,  see~\eqref{e:definition MBD}.
Assuming $\cup_{i=1}^K \mathcal{G}_i \subset L^2([0,T])$,
an  $\mathrm{APF}_k$ $A\in  L^2([0,T])$ is assigned  to  $\mathcal G_{i}$
if
\begin{align}\label{e:assign}
i  = \argmin_{j \in \{1,\ldots, K\} } \| \overline{A}^{\, \alpha}_{j} - A \|  ,
\end{align}
where $\| \cdot \|$ denotes the $L^2$-distance. Here, the trimmed mean is used for robustness and allows a control over the curves we may like to omit because of outliers, but e.g.\ the median could have been used instead.

\paragraph*{Example 10 (simulation study).} 
Consider the following distributions $\mathrm P_1,\ldots,\mathrm P_4$ for a point $x_i$.
\begin{itemize}
\item $\mathrm P_1$ (unit circle): $x_i$ is a uniform point on $\mathcal{C}((0,0),1)$
which is perturbed by $N_2\left(0.1 \right)$-noise.
\item
$\mathrm P'_1$ (two circles, radii 1 and 0.5): $x_i$ is a uniform point on $\mathcal{C}((0,0),1)\cup \mathcal{C}((1.5,1.5),0.5)$  and perturbed by  $N_2\left(0.1 \right)$-noise.
\item
$\mathrm P_2$ (circle of radius 0.8): $x_i$ is a uniform point on $\mathcal{C}((0,0),0.8)$
which is perturbed by $N_2\left(0.1 \right)$-noise. 
\item
$\mathrm P'_2$ (two circles, radii 0.8 and 0.5): $x_i$ is a uniform point on $\mathcal{C}((0,0),0.8)\cup \mathcal{C}((1.5,1.5),0.5)$  and perturbed by  $N_2\left(0.1 \right)$-noise. 
\end{itemize}
For $k=0,1$ we consider the following simulation study: $K=2$ and  
$r_1=r_2=50$;
$\mathcal{G}_1$ consists of $45$ $\mathrm{APF}_k$s associated to simulations of point processes consisting of $100$ IID
points with distribution $\mathrm P_1$ (the non-outliers)
and $5$ $\mathrm{APF}_k$s 
obtained in the same way but from  $\mathrm P'_1$ (the outliers); 
$\mathcal{G}_2$ is specified in the same way as $\mathcal{G}_1$ but replacing
$\mathrm P_1$ and  $\mathrm P'_1$ with $\mathrm P_2$ and $\mathrm P'_2$, respectively; and we have correctly specified that $\alpha=0.2$.
Then we simulate 100 $\mathrm{APF}_k$s associated to $P_1$ and 100 $\mathrm{APF}_k$s associated to $P_2$, i.e.\ they are all non-outliers. Finally, we use
\eqref{e:assign} to assign each of these 200 $\mathrm{APF}_k$s to either $\mathcal{G}_1$ or $\mathcal{G}_2$.

The top panels in Figure~\ref{fig:trimmed mean and PP supervised} show the $20\%$-trimmed means $\overline{A}^{\, 0.2}_{1}$ and $\overline{A}^{\, 0.2}_{2} $  when $k=0$
(left) and  $k=1$ (right). The difference between the $20\%$-trimmed means is clearest when $k=1$ and so we expect that the assignment error is lower in that case. In fact  
  wrong assignments happen mainly
  when the support of  $\mathrm P_1$ or $\mathrm P_2$  is not well covered by the point pattern as illustrated 
in the bottom panels.

   \begin{figure}[t]
\begin{center}
\begin{tabular}[c]{c c}
\includegraphics[scale=0.25]{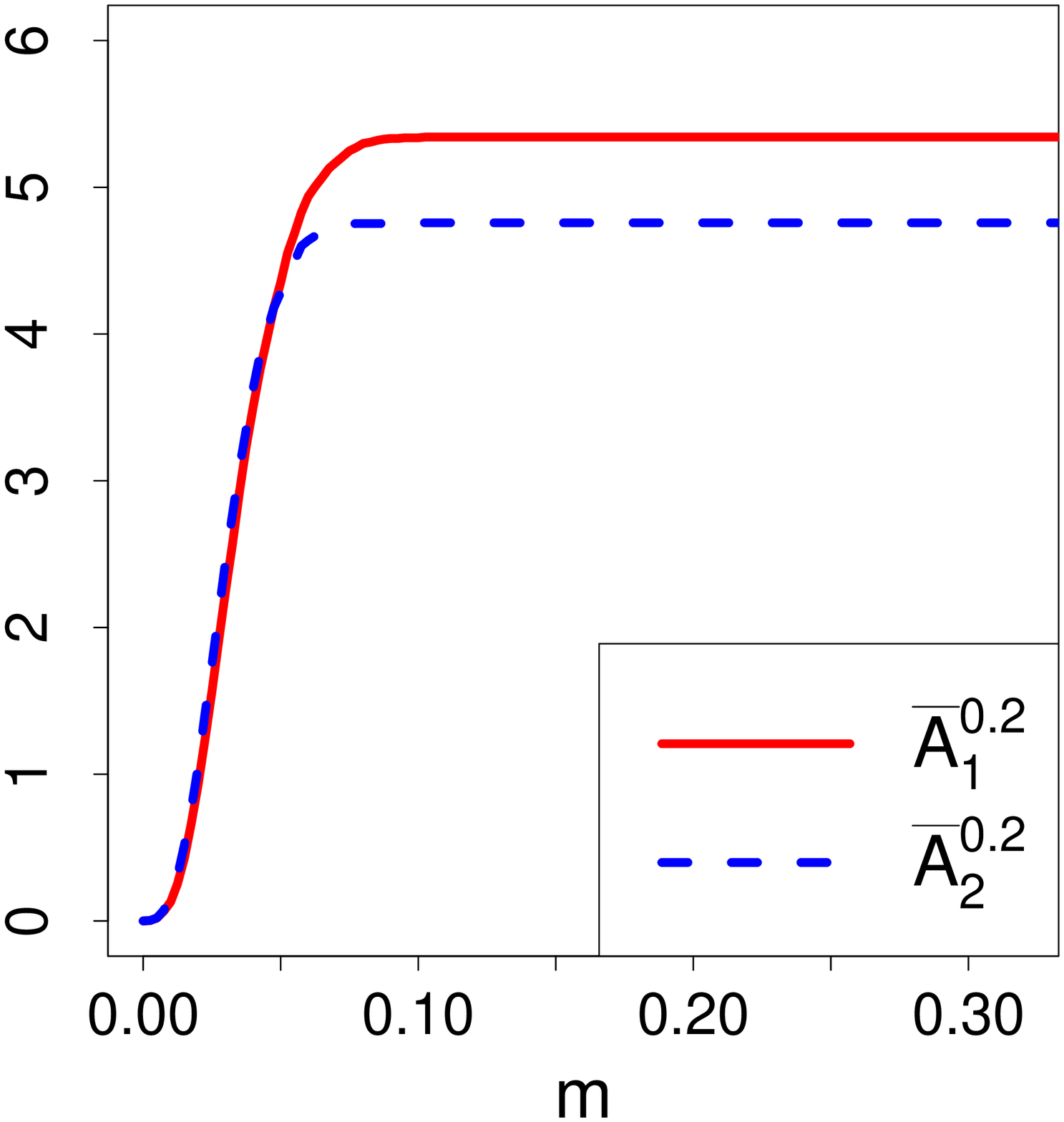}    &  \includegraphics[scale=0.25]{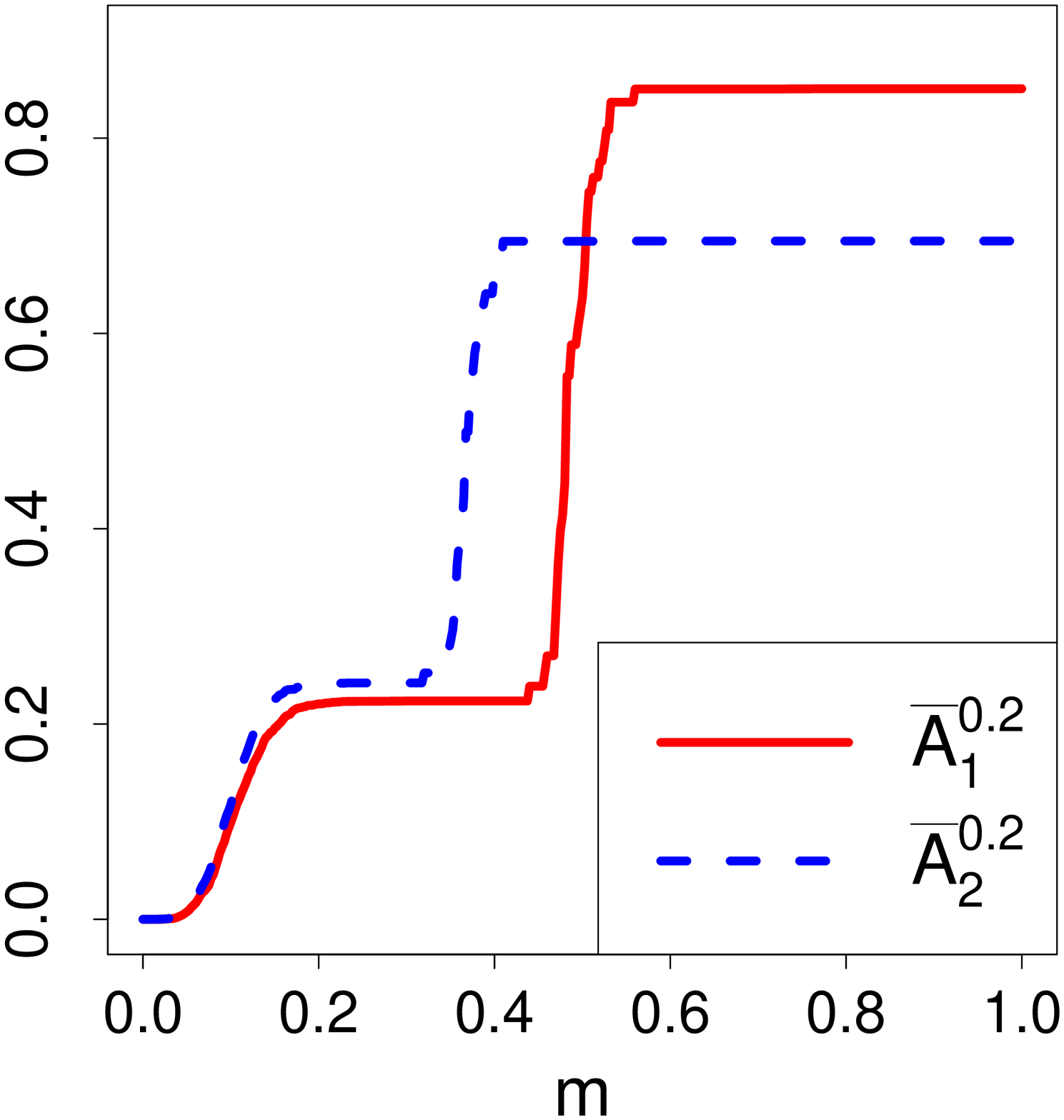} \\
 \includegraphics[scale=0.25]{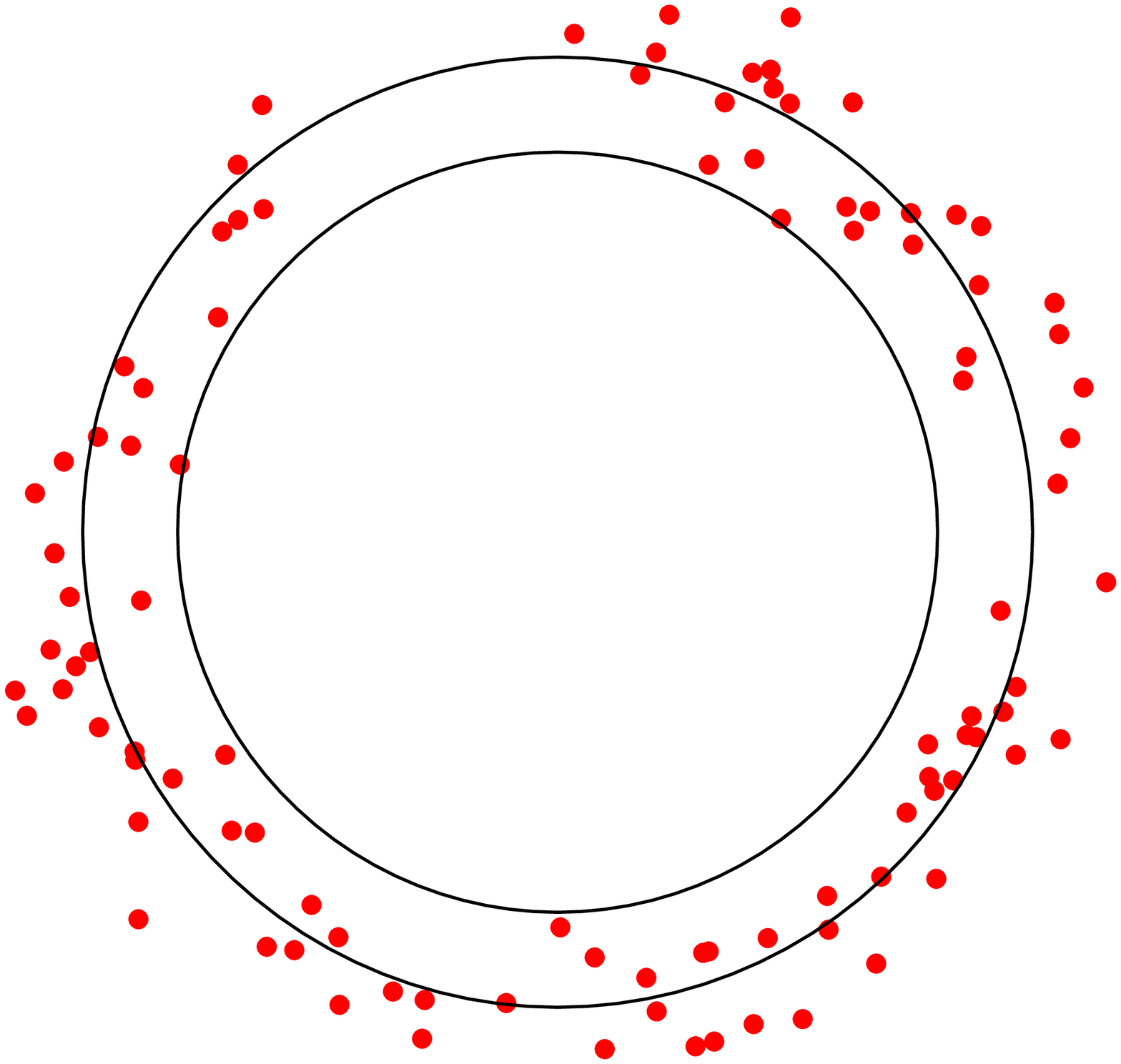}  & \includegraphics[scale=0.25]{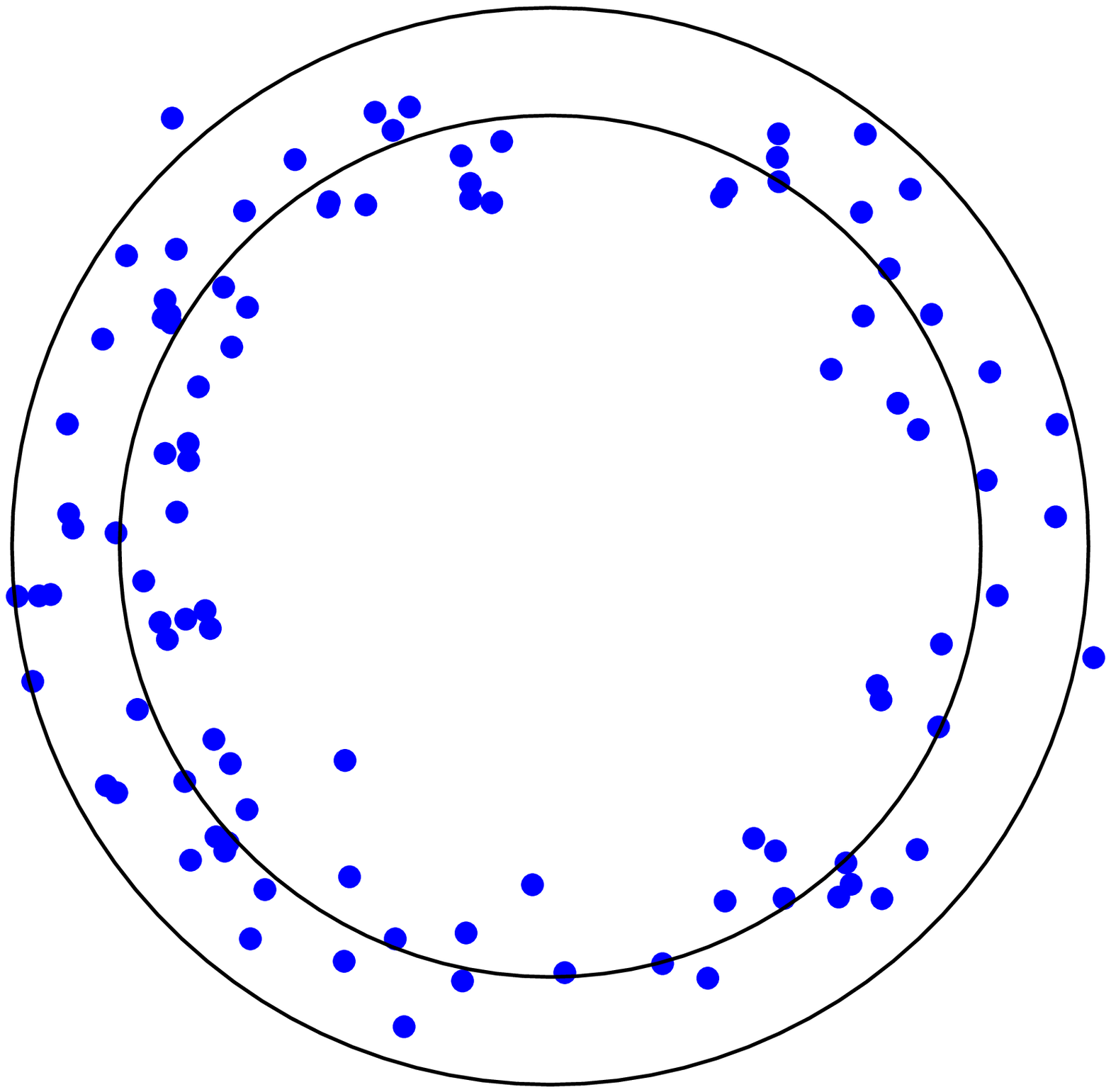}
\end{tabular}
\end{center}
\caption{Top panels: The $20\%$-trimmed mean functions with respect to
$\mathcal G_1$ and $\mathcal G_2$ when considering 
$\mathrm{APF}_0$s (left) and $\mathrm{APF}_1$s (right)
obtained from the Delaunay-complex and based on $100$ IID points following the distribution $\mathrm P_1$ (solid curve) or $\mathrm P_2$ (dotted curve).
Bottom panels:  Examples of point patterns with associated $\mathrm{APF}_0$s assigned to the wrong group, together with the circles of radius $0.8$ and $1$.}
\label{fig:trimmed mean and PP supervised}
\end{figure}

Repeating this simulation study $500$ times,
the percentage of $\mathrm{APF}_0$s wrongly assigned among the $500$ repetitions
has a mean of $6.7\%$ and a standard deviation of $1.7\%$, whereas for the $\mathrm{APF}_1$s
the mean is $0.24\%$ and the standard deviation is $0.43\%$. 
To investigate how the results depend on the radius of the smallest circle,
we repeat everything but with radius $0.9$ in place of $0.8$ when defining the distributions
$\mathrm{P}_2$ and $\mathrm{P}_2'$.
Then for the $\mathrm{APF}_0$s, the proportion of wrong assignments has a mean of $23.2\%$ and a standard deviation of $2.9\%$, and for the $\mathrm{APF}_1$s, a mean of $5.7\%$ and a standard deviation of $1.9\%$.
Similar to Example~9, the error is lowest when $k=1$ and this is due to the largest lifetime of a loop.

\section{Proof of Theorem~\ref{t:1}}\label{s:proof of theorem confidence region}

The proof of Theorem~\ref{t:1} follows along similar lines as in \cite{chazal:etal:13} as soon as we have verified Lemma~\ref{lemma APF donsker} 
below. Note that the proof of Lemma~\ref{lemma APF donsker} 
is not covered by the approach in~\cite{chazal:etal:13}. 

We first need to recall the following definition, 
where $\mathcal{B}_T$ denotes the topological space of  bounded real valued Borel functions defined on $[0,T]$ and  
its topology is induced by the uniform norm. 

\vspace{0.5cm}

\begin{definition}
A sequence  $\left \lbrace  X_r \right\rbrace_{r=1,2,\ldots}$  of random elements in $\mathcal{B}_T$  
converges in distribution to a random element $X$ in $\mathcal{B}_T$ if for any bounded  continuous function 
$f:\mathcal{B}_T \mapsto \R $, 
$\mathrm E f(X_r)$ converges to  $\mathrm E f(X)$ as  $r\rightarrow\infty$.
\end{definition}

\vspace{0.5cm}

\begin{lemma}\label{lemma APF donsker} Let the situation be as in Section~\ref{s:crmean}. 
As $r\rightarrow\infty$, $\sqrt{r} \left(\overline{A}_r-\mu\right)$ converges in distribution 
towards a zero-mean Gaussian process on $[0,T]$ 
with covariance function $c(m,m') = \mathrm{Cov}\left(  A_1 (m),  A_1(m') \right)$, $m,m' \in [0, T]$. 
\end{lemma}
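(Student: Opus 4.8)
The plan is to prove this as an application of a functional central limit theorem (a uniform CLT / Donsker theorem) for the empirical process indexed by a suitable function class. First I would reformulate the statement: the random functions $A_i = \mathrm{APF}_k$ associated with the IID diagrams $D_i \in \mathcal{D}_{k,T,n_{\mathrm{max}}}$ are IID random elements of $\mathcal{B}_T$, each bounded uniformly by $T n_{\mathrm{max}}$ (since $\sum_i c_i l_i \le n_{\mathrm{max}}\cdot T$ on this state space — indeed $m_i + l_i/2 \le T$ forces $l_i \le 2T$, and one checks the crude bound $\sum c_i l_i \le 2 T n_{\mathrm{max}}$, which is all that is needed). Writing $\mathbb{G}_r = \sqrt{r}(\overline{A}_r - \mu)$, the claim is that $\mathbb{G}_r$ converges in distribution in $(\mathcal{B}_T, \|\cdot\|_\infty)$ to a tight zero-mean Gaussian process $\mathbb{G}$ with the stated covariance. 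Since each $A_i$ takes values in a space of bounded functions, the natural route is to view $m \mapsto A_i(m)$ through the index set $[0,T]$ and invoke the abstract Donsker theorem (e.g.\ van der Vaart–Wellner, as the paper already cites for Theorem~\ref{th convergence KS}); the key hypothesis to verify is that the relevant class of functions is $\mathrm{P}$-Donsker.

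The main steps, in order, are: (i) identify the function class — for each fixed $m \in [0,T]$ consider the evaluation functional $\varphi_m: \mathcal{D}_{k,T,n_{\mathrm{max}}} \to \R$, $\varphi_m(D) = \sum_i c_i l_i \mathbf{1}(m_i \le m)$, so that $A_i(m) = \varphi_m(D_i)$ and $\mathcal{F} = \{\varphi_m : m \in [0,T]\}$ is the class of interest; (ii) check the envelope condition — the constant $2 T n_{\mathrm{max}}$ is a (square-integrable, indeed bounded) envelope; (iii) control the complexity of $\mathcal{F}$, which is where the real work lies. The functions $\varphi_m$ are monotone in $m$ in the sense that $m \mapsto \varphi_m(D)$ is nondecreasing for every fixed $D$; more importantly, for fixed $D$ the map is a right-continuous step function with at most $n_{\mathrm{max}}$ jumps, so $\mathcal{F}$ is contained in the set of such "generalised distribution-function" type classes. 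I would argue that $\mathcal{F}$ is a VC-type (polynomial-covering-number) class: writing $\varphi_m(D) = \int \mathbf{1}(s \le m)\, \nu_D(ds)$ where $\nu_D = \sum_i c_i l_i \delta_{m_i}$ is a finite measure with total mass $\le 2 T n_{\mathrm{max}}$, the class $\{s \mapsto \mathbf{1}(s \le m) : m \in [0,T]\}$ is the prototypical VC class (VC index $2$), and this VC property is inherited (up to the fixed multiplicative envelope) by $\mathcal{F}$. Hence $\mathcal{F}$ has uniform covering numbers bounded by a polynomial in $1/\epsilon$, which by the standard bracketing/uniform-entropy Donsker theorem gives that $\mathcal{F}$ is $\mathrm{P}$-Donsker for every probability measure $\mathrm P$ on $\mathcal{D}_{k,T,n_{\mathrm{max}}}$.

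Once $\mathcal{F}$ is shown to be Donsker, the conclusion is immediate: the empirical process $\sqrt{r}(\overline{A}_r - \mu)$ is exactly the $\mathcal{F}$-indexed empirical process $\{\mathbb{G}_r \varphi_m\}_{m \in [0,T]}$, which converges weakly in $\ell^\infty([0,T])$ to a tight zero-mean Gaussian process whose covariance is $\mathrm{Cov}(\varphi_m(D_1), \varphi_{m'}(D_1)) = \mathrm{Cov}(A_1(m), A_1(m')) = c(m,m')$, as asserted; tightness plus the identification of sample paths in $\mathcal{B}_T$ (bounded functions) completes the matching with the definition of convergence in distribution stated just before the lemma. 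I expect the main obstacle to be step (iii): carefully justifying that $\mathcal{F}$ — whose members are not themselves indicators but weighted sums of indicators with data-dependent weights $c_i l_i$ — is of VC-type or has integrable uniform entropy. The clean way around this is the representation $\varphi_m(D) = $ (a fixed bound) $\times \int \mathbf{1}(s \le m)\, d\tilde\nu_D(s)$ with $\tilde\nu_D$ a sub-probability measure, reducing everything to the monotone class $\{\mathbf{1}(\cdot \le m)\}$ and the stability of the Donsker property under such "mixing against a finite measure" operations; alternatively one can invoke directly that classes of uniformly bounded monotone functions on $[0,T]$ are universally Donsker. Either route is standard but must be spelled out, since it is precisely the point (noted in the excerpt) that Lemma~\ref{lemma APF donsker} is \emph{not} covered by the argument in \cite{chazal:etal:13}.
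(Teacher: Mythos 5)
Your proposal is correct in outline and shares the paper's overall strategy --- view $\sqrt{r}(\overline{A}_r-\mu)$ as the empirical process indexed by $\mathcal F=\{D\mapsto A_D(m): m\in[0,T]\}$, note the constant envelope, and prove that $\mathcal F$ is Donsker --- but the complexity-control step is argued by a genuinely different route. The paper constructs $L^2(\mathrm P)$-brackets explicitly: it pairs the functions $u_j(D)=\sum_i c_il_i1(m_i\le t_j)$ and $v_j(D)=\sum_i c_il_i1(m_i<t_{j+1})$, bounds the bracket size by the probability that a uniformly selected meanage falls in $(t_j,t_{j+1})$ given the number of features, chooses the $t_j$ as quantiles via Lemma~\ref{lemma:subdivision}, obtains $N_{[]}(\epsilon,\mathcal F,\|\cdot\|)=O(1/\epsilon^2)$, and applies the bracketing Donsker theorem (Theorem 19.5 in \cite{Vaart:98}). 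You instead write $\varphi_m(D)=\int 1(s\le m)\,\nu_D(\mathrm ds)$ with $\nu_D$ a finite measure of uniformly bounded mass and claim inheritance of VC-type uniform covering numbers, aiming at the uniform-entropy Donsker theorem; this can indeed be made rigorous, since for any finitely supported $Q$ one has $\|\varphi_m-\varphi_{m'}\|_{L^1(Q)}=\mu_Q\bigl((m\wedge m',\,m\vee m']\bigr)$ with $\mu_Q(\cdot)=\int\nu_D(\cdot)\,\mathrm dQ(D)$, and boundedness upgrades this to $L^2(Q)$-covering numbers of order $1/\epsilon^2$ uniformly in $Q$ --- note that the mechanism is really the monotonicity of $m\mapsto\varphi_m(D)$ plus a quantile partition of $\mu_Q$, i.e.\ the same computation the paper does for the single measure $\mathrm P$, so your route buys a universal Donsker statement at the cost of having to verify measurability/pointwise separability (right-continuity in $m$ and a rational grid suffice), which the bracketing route avoids. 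Two cautions: the "inheritance" step is asserted rather than proved and is exactly where the work lies, as you acknowledge; and your fallback appeal to "uniformly bounded monotone functions on $[0,T]$ are universally Donsker" is not applicable as stated, because the members of $\mathcal F$ are functions on the diagram space $\mathcal{D}_{k,T,n_{\mathrm{max}}}$ that are monotone in the \emph{index} $m$, not monotone functions of their argument; the correct statement to invoke (or prove) is the bracketing bound for classes monotone in the index, which is precisely the paper's argument.
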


\begin{proof}[Proof] 
 We need some notation and to recall some concepts of empirical process theory. 
For $D \in  \mathcal D_T^{k, n_{\mathrm{max}}}$, denote $A_D$ the $\mathrm{APF}_k$ of $D$. 
Let $\mathcal F = \lbrace f_m:0\leq m\leq T \rbrace$ be the class of functions 
$f_m:\mathcal D_T^{k, n_{\mathrm{max}}}\mapsto[0,\infty)$ given by $f_m(D)=A_{D}(m)$.
To see the connection with empirical process theory, we consider 
\begin{equation*}
\mathbb{G}_r(f_m) = \sqrt{r}\left( \frac{1}{r}\sum_{i=1}^r f_{m}(D_i)  - \mu(m) \right)
\end{equation*}
as an empirical process. 
 Denote $\|\cdot\|$ the $L^2$ norm on $\mathcal F$ with respect to the distribution of $D_1$,
 i.e.\ $\|f_m(\cdot)\|^2= \mathrm E\left\{A_{D_1}(m)^2\right\}$.  
 For $u,v\in \mathcal F $, the bracket $[u,v]$ is the set of all  functions $f\in\mathcal F$ with $u \leq f \leq v$. 
 For any $\epsilon>0$, $N_{[]}( \epsilon, \mathcal F , \|\cdot\| )$ 
 is the smallest integer $J\ge1$ such that $\mathcal{F} \subset \cup_{j=1}^J [u_j,v_j]$ 
 for some functions $u_1,\ldots,u_J$ and $v_1,\ldots,v_J$ in $\mathcal F$ 
 with $\|v_j-u_j\| \leq \epsilon$ for $j=1,\ldots,J$.  
 We show below that $\int_0^1 \sqrt{ \log \left( N_{[]} \left( \epsilon, \mathcal F ,  \|\cdot\|\right)  \right)} \ \mathrm d\epsilon$ is finite. 
Then,  by  Theorem 19.5 in  \citet{Vaart:98}, $\mathcal F$ is a so-called Donsker class which 
implies the convergence in distribution of $ \mathbb{G}_r(f_m)$ to a Gaussian process as in the statement of Lemma~\ref{lemma APF donsker}.

For  any sequence $-\infty=t_1 < \ldots<t_J=\infty$ with $J\geq 2$, for $j=1,\ldots,J-1$,  and for 
 $D  = \lbrace (m_1,l_1,c_1),\ldots,(m_n,l_n,c_n) \rbrace \in \mathcal D_T^{k,n_{\mathrm{max}}}$, 
let $u_j(D) = \sum_{i=1}^n c_i  l_i 1(m_i\le t_j)$ and $v_j( D ) = \sum_{i=1}^n c_i  l_i 1(m_i < t_{j+1})$  
(if  $n=0$, then $D$ is empty and we set $u_j(D) =  v_j( D ) = 0$). 
Then, for any $m \in [0,T]$, there exists a $j=j(m)$ 
such that  $u_j( D) \leq f_m(D) \leq  v_j(  D) $, i.e.\ $f_m(D) \in [u_j,v_j]$. 
Consequently,  $\mathcal{F} \subset \cup_{j=1}^{J-1} [u_j,v_j]$. 

We prove now that for any $\epsilon \in (0,1)$, the sequence $\lbrace t_j \rbrace _{ 1\leq j\leq J}$  can be chosen 
such that for $j=1,\ldots,J-1$, we have $\|v_j-u_j\| \leq \epsilon$. 
Write $D_1 =  \lbrace (M_1,L_1,C_1),\ldots,(M_N,L_N,C_N) \rbrace$, 
where $N$ is random and should not to be confused with $N$ in Sections~\ref{s:1:simulated dataset} and~\ref{s:sep} (if $N=0$, then $D_1$ is empty). 
Let $n\in\{1,\ldots, n_{\mathrm{max}}\}$ and conditioned on $N=n$, let $I$ be uniformly selected from $\{ 1,\ldots,n \}$. Then
\begin{align*} 
\mathrm E \left\{  \left(v_j \left( D_1 \right) -u_j \left( D_1 \right)  \right)^2  1 \left(N=n\right)  
\right\} 
                           &=  n^2 \mathrm E \left\{ 1\left(N=n \right)
                                  \frac{1}{n} \sum_{i=1}^n C_i L_i 1\left( M_i \in \left(t_j,t_{j+1} \right)  \right)    \right\}^2 \\
 &\leq T^2 n_{\mathrm{max}}^4 \mathrm E \left\{ 1\left(N=n\right)  
                                  1\left(M_I  \in \left(t_j,t_{j+1} \right)  \right)    \right\}^2   \\
                           &\leq T^2 n_{\mathrm{max}}^4  \mathrm P \left( M_I \in \left(t_j,t_{j+1} \right)  | N=n  \right),  
\end{align*} 
as $n\leq n_{\mathrm{max}}$, $C_i \leq n_{\mathrm{max}}$, and $L_i \leq T$. 
Further, 
\begin{align*}
\mathrm E \left\{  \left(v_j \left( D_1 \right) -u_j \left( D_1 \right)  \right)^2  1 \left(N=0\right)  
\right\} &=0.
\end{align*}
 Hence
\begin{align}\label{e:majoration bracket}
  \mathrm E \left\{v_j \left( D_1 \right) -u_j \left( D_1 \right) \right \}^2  
  &= \sum_{n=0}^{n_{\mathrm{max}}} \mathrm E \left\{  \left(v_j \left( D_1 \right) -u_j \left( D_1 \right)  \right)^2  1 \left(N=n\right)  \right\} \notag \\
   &\leq   T^2 n_{\mathrm{max}}^5 \max_{n = 1,\ldots,n_{\mathrm{max}}} 
  \mathrm P \left(  M_I \in \left(t_j,t_{j+1}\right)  | N=n \right).  
\end{align}
Moreover,  by Lemma~\ref{lemma:subdivision}  below, there exists
a finite sequence $\lbrace t_{n,j} \rbrace _{ 1\leq j\leq J_{n}}$ such that 
$\mathrm P (M_I \in  (t_{n,j},t_{n,j+1}) | N=n ) \leq  \epsilon^2/\left(T^2 n_{\mathrm{max}}^5\right)$ 
and  $ J_{n} \leq 2+ T ^2n_{\mathrm{max}}^5/\epsilon^2 $.  
Thus, by choosing 
\begin{align*}
\lbrace t_j \rbrace _{ 1\leq j\leq J} = \bigcup_{n=1,\ldots,n_{\mathrm{max}}}  \lbrace t_{n,j} \rbrace_{ 1\leq j\leq 
J_{n}},
\end{align*}
we have  $J \leq 2 n_\mathrm{max}  + T ^2n_{\mathrm{max}}^6/\epsilon^2$ and 
\begin{align*}
  \max_{n = 1,\ldots,n_{\mathrm{max}}} \mathrm P\left(  M_I \in \left(t_j,t_{j+1}\right) | N=n \right) 
\leq \frac{\epsilon^2}{T^2 n_{\mathrm{max}}^5}.
\end{align*}
Hence by~\eqref{e:majoration bracket},  $\|v_j-u_j\| \leq \epsilon$,  and so by definition, 
$N_{[]} \left( \epsilon, \mathcal F , \|\cdot\|\right) \leq 2n_\mathrm{max} + T ^2 n_{\mathrm{max}}^6/\epsilon^2 $.  
Therefore
\begin{align*}
 \int_0^1 \sqrt{ \log \left( N_{[]} \left( \epsilon, \mathcal F ,  \|\cdot\|\right)  \right)} \ \mathrm d\epsilon
        \leq  \int_0^1 \sqrt{ \log\left( 2n_{\mathrm{max}}+ T ^2n_{\mathrm{max}}^6/\epsilon^2 \right)} \ \mathrm 
d\epsilon<\infty.
\end{align*}
This completes the proof. 
\end{proof}
 
\paragraph*{Proof of Theorem~\ref{t:1}.} By the Donsker property established in the proof of Lemma~\ref{lemma APF donsker} and Theorem 2.4 in~\cite{gine:90},  $\sqrt{r} (\overline{A_r}- \overline{A^*_r})$ and $\sqrt{r} \left(\overline{A}_r-\mu\right)$ converge in distribution to the same process as $r\rightarrow\infty$, so the quantile of $\sup_{m\in[0,T]} \sqrt{r} \left|\overline{A_r}(m)- \overline{A^*_r}(m)\right|$ converges to the quantile of $\sup_{m\in[0,T]} \sqrt{r} \left|\overline{A_r}(m)- \mu(m)\right|$. Therefore,  $\hat{q}^B_\alpha$  provides the bounds for the asymptotic $100(1-\alpha)\%$-confidence region stated in Theorem~\ref{t:1}.

\vspace{0.5cm}
  
\begin{lemma}\label{lemma:subdivision}
Let $X$ be a positive random variable. For any $\epsilon \in (0,1)$, 
there exists a finite sequence $-\infty=t_1 < \ldots<t_J=\infty$  
such that $J\leq 2 + 1/\epsilon$ and for $j=1,\ldots, J-1$, 
 \begin{align*}
\mathrm  P \left(  X \in (t_j,t_{j+1}) \right) \leq \epsilon.
 \end{align*}
\end{lemma}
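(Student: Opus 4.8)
The plan is to construct the breakpoints greedily from left to right: starting at $t_1=-\infty$, at each stage push the next breakpoint as far to the right as possible subject to the \emph{open} interval just created having probability at most $\epsilon$, and stop once the remaining tail has probability at most $\epsilon$. To bound $J$ I will observe that each completed step consumes at least $\epsilon$ units of probability mass from a collection of pairwise disjoint intervals, so there can be at most $1/\epsilon$ such steps. This lemma is self-contained and uses nothing from earlier in the paper.

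First I would introduce the distribution function $F(x)=\mathrm P(X\le x)$, with the conventions $F(-\infty)=0$ and $F(\infty^-)=1$, and record the identity $\mathrm P(X\in(a,b))=F(b^-)-F(a)$ for $-\infty\le a<b\le\infty$. The only delicate point is to keep track of one-sided limits: for fixed $t$, the map $x\mapsto F(x^-)-F(t)$ is nondecreasing and \emph{left}-continuous, whereas $x\mapsto F(x)-F(t)$ is nondecreasing and \emph{right}-continuous, and the argument uses both.

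Then I would set $t_1=-\infty$ and, recursively, given $t_j<\infty$ with $\mathrm P(X>t_j)>\epsilon$, define
\[
t_{j+1}=\sup\{x>t_j:\ \mathrm P(X\in(t_j,x))\le\epsilon\}.
\]
Since $\mathrm P(X\in(t_j,x))\downarrow 0$ as $x\downarrow t_j$, the set is nonempty, so $t_{j+1}>t_j$; since $\mathrm P(X\in(t_j,x))\uparrow \mathrm P(X>t_j)>\epsilon$ as $x\uparrow\infty$, the set is bounded above, so $t_{j+1}<\infty$. By left-continuity of $x\mapsto F(x^-)-F(t_j)$ the supremum is attained, which gives $\mathrm P(X\in(t_j,t_{j+1}))=F(t_{j+1}^-)-F(t_j)\le\epsilon$, the required bound on this interval. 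As soon as a stage is reached with $\mathrm P(X>t_j)\le\epsilon$ I would stop, set $t_{j+1}=\infty=:t_J$, and note that $\mathrm P(X\in(t_{J-1},\infty))=\mathrm P(X>t_{J-1})\le\epsilon$; thus every open interval of the partition has probability at most $\epsilon$.

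It then remains to bound $J$ and to see the construction terminates. For $j=1,\dots,J-2$, any $x>t_{j+1}$ fails the defining condition, so $\mathrm P(X\in(t_j,x))>\epsilon$; letting $x\downarrow t_{j+1}$ and using right-continuity of $F$ (so $F(x^-)\to F(t_{j+1})$) yields $\mathrm P(X\in(t_j,t_{j+1}])=F(t_{j+1})-F(t_j)\ge\epsilon$. The half-open intervals $(t_j,t_{j+1}]$, $j=1,\dots,J-2$, are pairwise disjoint, hence $(J-2)\epsilon\le\sum_{j=1}^{J-2}\mathrm P(X\in(t_j,t_{j+1}])\le 1$; in particular infinitely many steps are impossible, so the recursion terminates, and $J\le 2+1/\epsilon$. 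The one place requiring care is precisely this bookkeeping: the inequality $\le\epsilon$ lives on the \emph{open} intervals and follows from left-continuity, while the matching lower bound $\ge\epsilon$ lives on the \emph{half-open} intervals and follows from right-continuity; once this distinction is handled cleanly, the rest is routine.
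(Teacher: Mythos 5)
Your proof is correct, but it takes a genuinely different route from the paper's. The paper defines the breakpoints explicitly through the generalised inverse (quantile function): it takes $J=2+\lfloor 1/\epsilon\rfloor$, $t_j=F^{-1}\bigl((j-1)\epsilon\bigr)$ for $j\le J-1$, and $t_J=\infty$, and the bound $\mathrm P\bigl(X\in(t_j,t_{j+1})\bigr)\le\epsilon$ then drops out of the standard quantile inequalities $F\bigl(F^{-1}(y)-\bigr)\le y\le F\bigl(F^{-1}(y)\bigr)$, with the last interval handled by $1-\lfloor 1/\epsilon\rfloor\epsilon<\epsilon$. You instead construct the $t_j$ greedily: the upper bound $\le\epsilon$ on each open interval comes from left-continuity of $x\mapsto F(x-)$ (so the supremum defining $t_{j+1}$ is attained), the matching lower bound $\ge\epsilon$ on each half-open interval $(t_j,t_{j+1}]$ comes from right-continuity of $F$, and disjointness of these intervals gives both termination and $J-2\le 1/\epsilon$. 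Both arguments are complete. The paper's version is shorter and delivers the cardinality bound by construction, at the price of relying on the generalised-inverse inequalities; yours is self-contained and makes the mass-counting mechanism explicit, but requires the extra bookkeeping you correctly flag (which one-sided continuity is used where) and the separate observation that the recursion must stop.
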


\begin{proof}
 Denote by $F$ the cumulative distribution function of $X$, 
 by $F( t-)$ the left-sided limit of $F$ at $t\in \R$,   
 and by $F^{-1}$ the generalised inverse of $F$, i.e.\ 
 $F^{-1}(y) = \inf \lbrace x \in \R : F(x) \geq y \rbrace$ for $y\in \R$.  
 We verify the lemma with $J= 2 +  \lfloor 1/\epsilon   \rfloor$,   $t_J=\infty$,
 and $t_j=  F^{-1} ( (j-1) \epsilon )$ for $j=1,\ldots,J-1$. 
  Then, for $j=1,\ldots, J-2$, 
 \begin{align*}
   \mathrm P \left(  X \in \left( t_j,t_{j+1} \right) \right)  &=   
	F \left( F^{-1} ( j \epsilon ) - \right) - F\left(  F^{-1} \left( \left(j-1\right) \epsilon \right) \right) 
     \leq  j\epsilon - \left(j-1\right)\epsilon=\epsilon.
 \end{align*}
 Finally, 
 \begin{align*}
    \mathrm P \left(  X \in \left(  t_{J-1},t_{J} \right) \right)  
                =   \mathrm P  (X > F^{-1} ( (J-2) \epsilon ) ) &= 1-  F \left(F^{-1} ( (J-2) \epsilon )  \right)
                \leq 1- \lfloor 1/\epsilon   \rfloor \epsilon  < \epsilon.
 \end{align*}
\end{proof}

\bibliographystyle{royal}
\bibliography{bibliography}
\end{document}